\newtheorem{theorem}{Theorem}[section]
\newtheorem{lemma}[theorem]{Lemma}
\newtheorem{corollary}[theorem]{Corollary}
\newtheorem{proposition}[theorem]{Proposition}
\newtheorem{remark}[theorem]{Remark}
\newtheorem{definition}[theorem]{Definition}
\newtheorem{example}[theorem]{Example}
\newtheorem{examples}[theorem]{Examples}
\newtheorem{Atheorem}{Theorem}[section]
\newtheorem{Acorollary}[Atheorem]{Corollary}
\newtheorem{Aproposition}[Atheorem]{Proposition}
\newtheorem{Adefinition}[Atheorem]{Definition}
\newenvironment{sproof}[1]
{\begin{proof}[#1]} {\end{proof}}
\newcommand{\Z}{\mathbb Z}
\newcommand{\Q}{\mathbb Q}
\newcommand{\GL}{\textnormal{GL}}
\newcommand{\SL}{\textnormal{SL}}
\newcommand{\El}{\textnormal{E}}
\newcommand{\sr}{\textnormal{sr}}
\newcommand{\glr}{\textnormal{glr}}
\newcommand{\er}{\textnormal{er}}
\newcommand{\GE}{\textnormal{GE}}
\newcommand{\W}{\textnormal{W}}
\newcommand{\E}{\textnormal{E}}
\newcommand{\Aut}{\textnormal{Aut}}
\newcommand{\V}{\operatorname{V}}
\newcommand{\Jac}{\mathcal{J}}
\newcommand{\SK}{\operatorname{SK}}
\newcommand{\Fitt}{\operatorname{Fitt}}
\newcommand{\Rp}{R_{\mathfrak{p}}}
\newcommand{\Mp}{M_{\mathfrak{p}}}
\newcommand{\ovR}{\overline{R}}
\newcommand{\ovM}{\overline{M}}
\newcommand{\Det}{\operatorname{det}}
\newcommand{\Mat}{\operatorname{Mat}}
\newcommand{\br}[1]{\lbrack #1 \rbrack}
\newcommand{\bb}{\mathbf{b}}
\newcommand{\eb}{\mathbf{e}}
\newcommand{\tb}{\mathbf{t}}
\newcommand{\rb}{\mathbf{r}}
\newcommand{\cb}{\mathbf{c}}
\newcommand{\zerob}{\mathbf{0}}
\newcommand{\mb}{\mathbf{m}}
\newcommand{\nb}{\mathbf{n}}
\newcommand{\lambdab}{\boldsymbol{\lambda}}
\newcommand{\omb}{\overline{\mathbf{m}}}
\newcommand{\half}{1\sfrac{1}{2}}
\newcommand{\ann}{\operatorname{ann}}
\newcommand{\ia}{\mathfrak{a}}
\newcommand{\ib}{\mathfrak{b}}
\newcommand{\ic}{\mathfrak{c}}
\newcommand{\ip}{\mathfrak{p}}
\def\paragraph{\@startsection{paragraph}{4}%
  \z@\z@{-\fontdimen2\font}%
  {\normalfont\bfseries}}
\title{Equivalent generating vectors of finitely generated modules over commutative rings}
\address{EPFL ENT CBS BBP/HBP. Campus Biotech. B1 Building, Chemin des mines, 9\\Geneva 1202, Switzerland}
\email{luc.guyot@epfl.ch}
\author{Luc Guyot}
\date{\today}
\subjclass[2020]{Primary 13E15, Secondary 13F05, 13F05, 19B10}
\keywords{elementary divisor ring; semihereditary ring; coherent ring; Prüfer domain; local-global ring; generalized Euclidean ring; special Whitehead group; Nielsen equivalence;} 
\begin{document}
\maketitle
\begin{abstract}
Let $R$ be a commutative ring with identity and let $M$ be an $R$-module which is generated by $\mu$ elements but not fewer. 
We denote by $\SL_n(R)$ the group of the $n \times n$ matrices over $R$ with determinant $1$. 
We denote by $\E_n(R)$ the subgroup of $\SL_n(R)$ generated by the the matrices which differ from the identity by a single off-diagonal coefficient.  
Given $n \ge \mu$ and $G \in \left\{\SL_n(R),\E_n(R)\right\}$, we study the action of $G$ by matrix right-multiplication on $\V_n(M)$, the set of elements of $M^n$ whose components generate $M$. 
Assuming that $M$ is finitely presented and that $R$ is an elementary divisor ring or an almost local-global coherent Prüfer ring,
we obtain a description of $\V_n(M)/G$ which extends the author's earlier result on finitely generated modules over quasi-Euclidean rings \cite{Guy17a}. 
\end{abstract}

\section{Introduction} \label{SecIntro}
Rings are supposed unital and commutative. 
The unit group of a ring $R$ is denoted by $R^{\times}$. Let $M$ be a finitely generated $R$-module. 
We denote by $\mu(M)$ the minimal number of generators of $M$.
For $n \ge \mu(M)$, we denote by $\V_n(M)$ the set of \emph{generating vectors of $M$ of length $n$}, i.e., 
the set of $n$-tuples in $M^n$ whose components generate $M$. 
We consider the action of $\GL_n(R)$ on $\V_n(M)$ by matrix right-multiplication. 
Let $\SL_n(R)$ be the subgroup of $\GL_n(R)$ of the matrices with determinant $1$.
Let $\E_n(R)$ be the subgroup of $\SL_n(R)$ generated by the \emph{elementary matrices}, i.e., 
the matrices which differ from the identity by a single off-diagonal element. 
Let $G$ be a subgroup of $\GL_n(R)$. Two generating vectors $\mb$, $\mb' \in \V_n(M)$ are said to be \emph{$G$-equivalent}, 
which we also denote by $\mb' \sim_{G} \mb$, if there exists $g \in G$ such that 
$\mb' = \mb g$. 
Our chief concern is the description of the quotient $\V_n(M)/G$ with $G \in \left\{
\SL_n(R), \E_n(R), \GL_n(R)\right\}$ for $R$ in a class of rings comprising the Dedekind rings.  

Let us highlight earlier results pertaining to this topic.
When $M = R$, the elements of $\V_n(M)$ are usually called the unimodular rows of size $n$
and the orbit set $\W_n(R) \Doteq \V_n(R)/\E_n(R)$ has been extensively studied \cite[Section VIII.5]{Lam06}. 
Suslin and Vaserstein \cite[Théorème 4]{Bas75} \cite[Corollary 7.4]{SV76} discovered that $\W_3(R)$ can be identified with
 the elementary symplectic Witt group of $R$ if $\E_n(R)$ acts transitively on 
$\V_n(R)$ for every $n > 3$. 
In \cite[Theorem 3.6]{vdK83}, van der Kallen used the former structure inductively to define a structure of Abelian group on 
$\W_n(R)$ when $R$ has finite stable rank and $n$ is large enough. 
The interested reader is referred to \cite{DTZ18} for the latest developments on the van der Kallen group structure of $\W_n(R)$ for an affine domain $R$.
In the latter article, Das, Tikader and Zinna describe moreover the Abelian group structure of $\V_{n + 1}(R)/\SL_{n + 1}(R)$ induced by $\W_{n + 1}(R)$ for 
$R$ a smooth affine real algebra of dimension $n \ge 2$ \cite[Theorem 1.2]{DTZ18}; see also \cite{Fas11} for seminal results of this flavor.
 
For $R = S\br{x_1, \dots, x_m}$, the ring of polynomials in $m$ indeterminates over a ring $S$, 
Murthy determined conditions under which $\GL_n(R)$ acts transitively on $\V_n(\sum_{k = 1}^r Rx_i)$ for $r \le n \le m$  \cite{Mur03}. 

To introduce further known results, let us define the \emph{determinant} $\det(M)$ of a finitely generated $R$-module 
$M$ as the exterior product $\bigwedge^{\mu}M$ where $\mu = \mu(M)$. 
The module $\det(M)$ is a cyclic $R$-module whose annihilator is $\Fitt_{\mu - 1}(M)$, the $\mu$-th Fitting ideal of $M$ (see Section \ref{SubSecFittAndDet} for definition).
The \emph{determinant map} $\det: M^n \rightarrow \bigwedge^n M$ is defined by $\det(m_1, \dots, m_{\mu}) = m_1 \wedge \cdots \wedge m_n$. 
It is immediate to check that $\det(\V_{\mu}(M)) \subseteq \V_1(\det(M))$. 
For $I$ an ideal of a ring $R$, we showed that $\V_2(I)/\SL_2(R)$
 is equipotent with a subset of $\V_1(\det(I))$ \cite[Theorem A]{Guy20} if $I$ can be generated by two elements which are not zero-divisors. 
For $R$ a quasi-Euclidean ring and $M$ an arbitrary finitely generated $R$-module, 
we proved that the determinant map induces a bijection from 
$\V_{\mu}(M)/\E_{\mu}(R)$ onto $\V_1(\det(M))$ and that $\E_n(R)$ acts 
transitively on $\V_n(M)$ for every $n > \mu$ \cite[Theorem A and Corollary B]{Guy17a}. 
In this article we generalize  \cite[Theorem A and Corollary B]{Guy17a}
to two classes of rings: the elementary divisor rings and the rings considered by Couchot in \cite[Theorems 1 and 2]{Cou07}. 
For the time being, we denote by $\mathcal{D}$ the union of these two classes. Their definitions are postponed to Section  \ref{SecEDRAndArithmetic}.  
The reader should only bear in mind that $\mathcal{D}$ contains the Dedekind rings and some natural generalizations.

The rings in $\mathcal{D}$ share three features which make them particularly amenable 
to the study of the generating vectors of their finitely presented modules. 
For any such ring $R$, the following hold (see Proposition \ref{PropInstrumental} and Section \ref{SecEDRAndArithmetic}'s theorems):
\begin{itemize}
\item[(1)] Every finitely generated ideal of $R$ can be generated by two elements.
\item[(2)] The stable rank of $R$ is at most $2$.
\item[(3)] Finitely presented $R$-modules have an \emph{invariant decomposition}.
\end{itemize}

Our first result relates $\SL_n(R)$-orbits of generating vectors to the image of the determinant map.

\begin{Atheorem} \label{ThSL}
Let $R$ be a ring in $\mathcal{D}$.
Let $M$ be a finitely presented $R$-module and set $\mu = \mu(M)$. 
Then the determinant map induces a bijection from $\V_{\mu}(M)/\SL_{\mu}(R)$ onto $\V_1(\det(M))$. 
In addition, the group $\SL_n(R)$ acts transitively on $\V_n(M)$ for every $n > \mu$.
\end{Atheorem}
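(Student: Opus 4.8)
The plan is to treat the two assertions separately, reducing each to the three features of $\mathcal{D}$ recalled above. Throughout I fix an invariant decomposition $M \cong \bigoplus_{i=1}^{\mu} R/I_i$ afforded by feature (3), with a chain $I_1 \subseteq \cdots \subseteq I_{\mu} \subsetneq R$, and I write $\mb_0 = (g_1, \dots, g_{\mu})$ for the generating vector whose $i$-th component generates the $i$-th summand (so $\ann(g_i) = I_i$). Since $\det(M) = \bigwedge^{\mu} M \cong \bigotimes_{i=1}^{\mu} R/I_i \cong R/J$ with $J \Doteq \Fitt_{\mu-1}(M) = \sum_i I_i = I_{\mu}$, I may identify $\V_1(\det(M))$ with the unit group $(R/J)^{\times}$. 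The first thing to record is that $\det$ is $\SL_{\mu}(R)$-invariant: for $g \in \GL_{\mu}(R)$ one has $\det(\mb g) = \det(g)\,\det(\mb)$ by multilinearity, so $\det(g)=1$ forces $\det(\mb g) = \det(\mb)$. Hence $\det$ descends to a well-defined map $\overline{\det}\colon \V_{\mu}(M)/\SL_{\mu}(R) \to \V_1(\det(M))$, and the content of the first assertion is that $\overline{\det}$ is a bijection.

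Surjectivity of $\overline{\det}$ is the easy half. Any generator of $R/J$ has the form $u\cdot\det(\mb_0)$ with $u \in (R/J)^{\times}$; lifting $u$ to $\tilde u \in R$ I replace the last component of $\mb_0$ by $\tilde u\, g_{\mu}$. The resulting vector still generates $M$, because $\tilde u$ is a unit modulo $\ann(g_{\mu}) = I_{\mu} = J$, and its determinant is $\tilde u\cdot\det(\mb_0) = u\cdot\det(\mb_0)$. This realizes every class in $\V_1(\det(M))$.

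Injectivity of $\overline{\det}$ is the heart of the matter and where I expect the real work to lie. Reducing to the case where one of the two vectors is a standard representative $\mb_0$ of its determinant class (its surjection has kernel exactly $\bigoplus_i I_i$), I take $\mb' \in \V_{\mu}(M)$ with $\det(\mb')=\det(\mb_0)$; freeness of $R^{\mu}$ lets me lift the surjection attached to $\mb'$ through that attached to $\mb_0$, producing $U \in \Mat_{\mu}(R)$ with $\mb' = \mb_0 U$, and comparing determinants forces $\det(U) \equiv 1 \pmod{J}$. Establishing $\mb' \sim_{\SL_{\mu}(R)} \mb_0$ then amounts to the lifting statement that any such $U$ can be adjusted, modulo matrices whose $i$-th row lies in $I_i$ (these do not alter $\mb_0 U$ since $I_i = \ann(g_i)$), into an element of $\SL_{\mu}(R)$. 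I would prove this by induction on $\mu$: feature (1) lets me write $I_i = (a_i,b_i)$ and perform $\SL_2$-reductions on pairs of coordinates — the diagonal reduction underlying the structure theorem for the rings of Section~\ref{SecEDRAndArithmetic} — to peel off one cyclic summand at a time, while feature (2) supplies the stable-range input needed to complete the residual reduced matrix to one of determinant exactly $1$ (equivalently, to kill the relevant $\SK_1$-type symbol). The delicate bookkeeping that the determinant congruence is preserved at each step and ultimately absorbed into $\SL_{\mu}(R)$ is the main obstacle.

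For the transitivity statement with $n>\mu$ I would show $\V_n(M)/\SL_n(R)$ is a single point in two steps. First, since $n > \mu \ge \mu(M_{\mathfrak{p}})$ for every prime $\mathfrak{p}$, feature (2) permits a stable-range reduction by elementary (hence $\SL_n$-) operations turning any $\mb \in \V_n(M)$ into $(\tilde{\mb},0)$ with $\tilde{\mb}\in \V_{n-1}(M)$; iterating reduces every vector to a stabilization $(\mb_0',0,\dots,0)$ of a length-$\mu$ generating vector. Second, I would prove that any two such stabilizations are $\SL_n(R)$-equivalent: here the extra zero coordinate removes the determinant obstruction, because $\bigwedge^{n} M = 0$ for $n>\mu$ and, concretely, the stabilizer of $(\mb_0,0,\dots,0)$ contains matrices whose bottom-right entry is an arbitrary element of $R$ and hence contributes freely to the determinant modulo $J$; using the stable-range hypothesis once more to complete the relevant matrix to one of determinant $1$, all length-$n$ generating vectors collapse to a single $\SL_n(R)$-orbit. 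As with injectivity, the decisive input is the stable-range completion step, the remainder being formal.
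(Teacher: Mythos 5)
Your argument has a genuine gap: it assumes that ``feature (3)'' always yields a decomposition $M \cong \bigoplus_{i=1}^{\mu} R/I_i$ into \emph{cyclic} modules. That is true for elementary divisor rings (Theorem \ref{ThEDRDecomposition}), but not for the other half of $\mathcal{D}$: for an almost local-global coherent Prüfer ring, the invariant decomposition (Theorem \ref{ThInvariantDecomposition}) in general contains a direct summand which is a non-principal invertible ideal $I$ (already for a Dedekind domain with non-trivial class group and $M = I$ itself). Such an $M$ is \emph{not} a direct sum of cyclics --- indeed, a ring all of whose finitely presented modules decompose into cyclics is already an elementary divisor ring --- so your entire setup, including the identification $\det(M)\cong R/\sum_i I_i$ and the chosen base point $\mb_0$, does not apply. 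This missing case is where most of the paper's work lies: one must prove that $\SL_n(R)$ acts transitively on $\V_n(I)$ for an invertible two-generated ideal $I$ (Section \ref{SecI}, using $\half$-generation, the PP-ring property and, for $n=2$, an external result on rank-one projectives), and then splice that summand together with the free and torsion parts (Propositions \ref{PropTorsionFree} and \ref{PropWithIAndSomeTorsion}), after first using the idempotents $e_j$ and Proposition \ref{PropProduct} to reduce to a single block. None of this is recoverable from your three ``features'' alone. Note also that for such $M$ (torsion-free, not free) one has $\det(M)=\bigwedge^{\mu}M = 0$, so the first assertion of the theorem \emph{is} the transitivity of $\SL_{\mu}(R)$ on $\V_{\mu}(M)$ --- a substantive statement your proof never touches.

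Two further points, secondary but real. First, in the injectivity step you say stable rank $\le 2$ lets you ``kill the relevant $\SK_1$-type symbol''; it does not ($\SK_1$ can be non-trivial for rings of stable rank $2$), and the paper never needs to kill it for the $\SL_n$-statement --- the correct mechanism is the Hermite/K-Hermite property ($\glr(R)=1$) plus the elementary fact that $\SL_2(R)$ acts transitively on $\V_2(R)$, combined with Lemma \ref{LemReduction} to peel off one cyclic factor at a time. Your induction for the cyclic case is in that spirit but needs this input made precise. Second, for $n>\mu$ your stable-range reduction from length $n$ to length $n-1$ requires $n > \sr(M)$, and Proposition \ref{PropRanks} only gives $\sr(M)\le \mu+\sr(R)-1\le \mu+1$; so the case $n=\mu+1$ does not follow from a bare stable-range count and must be handled by the same matrix reductions as the $n=\mu$ case (as in Propositions \ref{PropInvariantDecomposition} and \ref{PropTorsionFree}$(iii)$).
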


In order to generalize Theorem \ref{ThSL} to direct products with factors in $\mathcal{D}$, we introduce the following definition.

\begin{Adefinition}
\label{DefDelta}
A finitely generated $R$-module $M$ is said to satisfy the property $\Delta_{\SL}$ if the  determinant map
 induces a bijection from $\V_n(M)/\SL_n(R)$ to $\V_1(\bigwedge^n M)$ for every $n \ge \mu(M)$.
The ring $R$ is said to satisfy  the property $\Delta_{\SL}$ if every finitely presented $R$-module satisfies $\Delta_{\SL}$.
\end{Adefinition}

We define likewise the property $\Delta_{\E}$ for which an analog of Proposition \ref{PropStability} 
below holds (see Propositions \ref{PropProduct} and \ref{PropQuotient}).

\begin{Aproposition}[Propositions \ref{PropProduct} and \ref{PropQuotient}] \label{PropStability}
The two following hold.
\begin{itemize}
\item[$(i)$]
Let $\{R_x\}_{x \in X}$ be a family of rings satisfying $\Delta_{\SL}$ and let $R =  \prod_x R_x$.
Then $R$  satisfies $\Delta_{\SL}$.
\item[$(ii)$] Let $R$ be a ring which satisfies $\Delta_{\SL}$. Let $\ovR$ be a quotient of $R$ such that the natural map $\SL_n(R) \rightarrow \SL_n(\overline{R})$ 
is surjective for every $n$. Then $\ovR$ satisfies $\Delta_{\SL}$.
\end{itemize}
\end{Aproposition}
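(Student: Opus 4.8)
The plan is to treat the two statements separately, reducing each to the corresponding assertion for the individual modules, and to isolate at the end the one genuinely delicate point.

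For $(i)$, write $R = \prod_x R_x$ and let $M$ be a finitely presented $R$-module, say $M = \operatorname{coker}(R^p \xrightarrow{A} R^q)$. The key structural observation is that, because $A$ has finite size, $M$ decomposes as a product $M \cong \prod_x M_x$, where $M_x = M \otimes_R R_x = \operatorname{coker}(R_x^p \xrightarrow{A_x} R_x^q)$ is finitely presented over $R_x$; indeed $\operatorname{im}(A) = \prod_x \operatorname{im}(A_x)$, since the columns may be chosen componentwise. I would first record the elementary compatibilities that follow: a family $(\mb_x)_x \in \prod_x M_x^{\,n}$ generates $M$ if and only if each $\mb_x$ generates $M_x$, whence $\V_n(M) = \prod_x \V_n(M_x)$ and $\mu(M) = \sup_x \mu(M_x) \le q$. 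Likewise $\SL_n(R) = \prod_x \SL_n(R_x)$ acts factorwise, so that $\V_n(M)/\SL_n(R) = \prod_x \bigl(\V_n(M_x)/\SL_n(R_x)\bigr)$.

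It then remains to match the determinant side. Since exterior powers commute with the base changes $R \to R_x$, and $\bigwedge^n M$ is again finitely presented (it is the cokernel of a map between finite free modules built from $A$), the same product principle gives $\bigwedge^n M = \prod_x \bigwedge^n M_x$ and $\V_1(\bigwedge^n M) = \prod_x \V_1(\bigwedge^n M_x)$, the determinant map being the product of the maps $\det_{R_x}$. For $n \ge \mu(M) \ge \mu(M_x)$ each factor map $\V_n(M_x)/\SL_n(R_x) \to \V_1(\bigwedge^n M_x)$ is a bijection because $R_x$ satisfies $\Delta_{\SL}$; a product of bijections is a bijection, which is exactly $\Delta_{\SL}$ for $M$.

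For $(ii)$, write $\ovR = R/I$ and fix a finitely presented $\ovR$-module $\ovM$ together with $\ell \ge \mu(\ovM)$. Lifting a minimal presentation matrix of $\ovM$ entrywise along $R \to \ovR$ produces a finitely presented $R$-module $M$ with $M/IM \cong \ovM$ and $\mu(M) = \mu(\ovM)$, and base change gives $\bigwedge^\ell \ovM = (\bigwedge^\ell M)/I(\bigwedge^\ell M)$. Reduction modulo $I$ yields maps $\rho \colon \V_\ell(M) \to \V_\ell(\ovM)$ and $\sigma \colon \V_1(\bigwedge^\ell M) \to \V_1(\bigwedge^\ell \ovM)$ fitting into a square that commutes with the determinant maps and satisfies $\rho(\mb g) = \rho(\mb)\,\overline g$ for $g \in \SL_\ell(R)$. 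The hypothesis that $\SL_\ell(R) \to \SL_\ell(\ovR)$ is onto enters twice: it makes $\operatorname{im}(\rho)$ a union of $\SL_\ell(\ovR)$-orbits, and it lets every $\SL_\ell(\ovR)$-equivalence over $\ovM$ be realised by an $\SL_\ell(R)$-matrix over $M$. The strategy is then to transport the bijection $\det_R \colon \V_\ell(M)/\SL_\ell(R) \xrightarrow{\sim} \V_1(\bigwedge^\ell M)$, granted since $R$ satisfies $\Delta_{\SL}$, down to $\ovR$: given two generating vectors of $\ovM$ with equal determinant, lift them to generating vectors of $M$ with equal determinant, join the lifts by an element of $\SL_\ell(R)$, and project.

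The main obstacle is precisely this lifting step, and it is where the surjectivity hypothesis does the real work. Because $I$ need not lie in the Jacobson radical, reduction is surjective neither on generating vectors nor on generators of the determinant — already $\V_1(\Z) \to \V_1(\Z/n)$ fails to be onto: a generating vector of $\ovM$ may lift only to a vector whose components generate $M$ modulo $IM$, and whose determinant differs from the target by a unit of $\ovR$ invisible over $R$. The point to establish is that such a discrepancy can always be absorbed by an element of $\SL_\ell(\ovR)$, built from the relations of $\ovM$ and pulled back to $\SL_\ell(R)$, so that after this correction genuine generating vectors of $M$ with matching determinant become available. I expect the cleanest route to run through the invariant decomposition of $\ovM$ (feature $(3)$), treating the free part — where equal determinants already force an $\SL_\ell$-relation on a free module — separately from the torsion part, whose relations supply the room needed to adjust the determinant. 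Once this lifting lemma is in place, both the injectivity and the surjectivity of $\det_{\ovR}$ follow formally from the bijectivity of $\det_R$ and the commutativity of the square.
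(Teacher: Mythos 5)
Part $(i)$ of your proposal is correct and is essentially the paper's argument: your componentwise description of $\operatorname{coker}(A)$ over $\prod_x R_x$ is exactly the content of the paper's Lemma \ref{LemProduct}.$iii$ (that a finitely presented $R$-module satisfies $M \simeq \prod_x e_xM$), and the rest — $\V_n(M) = \prod_x \V_n(M_x)$, $\SL_n(R) \simeq \prod_x \SL_n(R_x)$, compatibility of $\bigwedge^n$ with the decomposition — is how Proposition \ref{PropProduct} is proved.

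Part $(ii)$ is where there is a genuine gap, and it stems from a wrong choice of $M$. You lift a presentation matrix of $\ovM$ to obtain a finitely presented $R$-module $M$ with $M/IM \simeq \ovM$, and you then need to lift generating vectors (and determinant classes) from $\ovM$ back to $M$. As you yourself observe, the reduction $\V_\ell(M) \to \V_\ell(\ovM)$ is not surjective in general — by Proposition \ref{PropStableRankAndSurjectivity} this surjectivity is essentially a stable-rank condition, which is not among the hypotheses — and the repair you sketch is not carried out; moreover it cannot invoke the invariant decomposition of $\ovM$, since Proposition \ref{PropStability}.$ii$ makes no assumption placing $R$ or $\ovR$ in $\mathcal{D}$. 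The paper's proof of Proposition \ref{PropQuotient} avoids the lifting problem entirely by \emph{restriction of scalars}: take $M$ to be $\ovM$ itself, viewed as an $R$-module via $R \twoheadrightarrow \ovR$. Then $\V_\ell(M)$ and $\V_\ell(\ovM)$ are literally the same set, the $\SL_\ell(R)$-orbits coincide with the $\SL_\ell(\ovR)$-orbits precisely because $\SL_\ell(R) \to \SL_\ell(\ovR)$ is surjective, and $\bigwedge^\ell_R M \simeq \bigwedge^\ell_{\ovR}\ovM$ by base change, so the bijectivity of $\det$ for $M$ over $R$ transfers verbatim to $\ovM$ over $\ovR$. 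With that choice of $M$ your commutative square degenerates into identifications and nothing remains to be lifted; with your choice, the missing lifting lemma is false in the stated generality.
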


Proposition \ref{PropStability} implies that a direct product of rings, 
each factor of which is in $\mathcal{D}$ or is the quotient of a ring in $\mathcal{D}$, has property $\Delta_{\SL}$.

Let us now outline a class of rings $R$ for which the action of $\SL_n(R)$ on $\V_n(M)$ is always transitive. 
We say that a ring $R$ has property $T_{\SL}$ if for every finitely presented $R$-module $M$,  the group $\SL_n(R)$ acts transitively on $\V_n(M)$ for every $n \ge \mu(M)$.  
We define likewise the property $T_G$ for $G \in \{\E, \GL \}$.
If $R$ satisfies $T_{\SL}$, then the only unit of $R$ is $1$, its identity element. 
Commutative rings with no units other than $1$ have been studied by Cohn \cite[Theorem 3]{Coh58}.
For every Boolean ring $B$, both $B$ and $B[x]$ have no units other than $1$. Since $B$ and $B[x]$ are elementary divisor rings 
\cite[Example 2]{Shores74}, both enjoy property $T_{\SL}$ by Theorem \ref{ThSL}.

Regarding the action of $\GL_n(R)$ on $\V_n(R)$, we observe that it is essentially determined 
by the action of $\SL_n(R)$. Indeed, decomposing $\GL_n(R) \simeq \SL_n(R) \rtimes R^{\times}$, we easily see that 
$\V_n(M)/\GL_n(R) = (R^{\times})^n \backslash \V_n(M) /\SL_n(R)$ where $(R^{\times})^n$ acts on the left via componentwise multiplication. 
If for instance an $R$-module $M$ has property $\Delta_{\SL}$, then $\V_n(M) / \GL_n(R)$ identifies with 
$$R^{\times} \backslash \V_1(\det(M)) \simeq (R/\Fitt_{\mu - 1}(M))^{\times} / (R^{\times} + \Fitt_{\mu - 1}(M)).$$
If a ring $R$ has the property $T_{\GL}$, then obviously every surjective ring homomorphism $R \twoheadrightarrow \overline{R}$ 
must induce a surjective homomorphism $R^{\times} \twoheadrightarrow (\overline{R})^{\times}$ 
on unit groups. The latter condition is equivalent to say that $R$ has stable rank $1$, see \cite[Lemma 6.1]{EO67} or \cite[Proposition 4.2]{Guy20}. 
Suppose $R$ has property $\Delta_{\SL}$, e.g., $R$ lies in $\mathcal{D}$. 
Then it is also clear that $R$ has property $T_{\GL}$ if $R$ has moreover stable rank $1$. 
For instance, the ring in \cite[Example 7]{Chen17b} is a Jacobson principal ideal domain of stable rank $1$ and thus enjoys property $T_{\GL}$. 

Besides the action of a subgroup $G \subseteq \GL_n(R)$ on $\V_n(M)$, it is also natural to consider the action of 
$\Aut_R(M)$, the group of the $R$-automorphisms of $M$, acting diagonally on $M^n$ from the left. 
This action commutes with the action of  $G$ from the right, so that $\Aut_R(M) \times G$ acts also on $\V_n(M)$ from the left. 
The following is an immediate consequence of Theorem \ref{ThSL} and of the invariant decompositions of the modules at play 
(see Theorems \ref{ThEDRDecomposition} and \ref{ThInvariantDecomposition} below):

 \begin{Acorollary}
 Let $R$ be a ring in $\mathcal{D}$. Let $M$ be a finitely presented $R$-module.
 Then $\Aut_R(M) \times \SL_n(R)$ acts transitively on $\V_n(M)$ for every $n \ge \mu(M)$.
 \end{Acorollary}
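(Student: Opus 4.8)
The plan is to deduce the statement from Theorem~\ref{ThSL} together with the fact that an automorphism of $M$ acts on $\det(M)$ by a scalar. First I would dispose of the case $n > \mu$: there Theorem~\ref{ThSL} already asserts that $\SL_n(R)$ acts transitively on $\V_n(M)$, so the larger group $\Aut_R(M) \times \SL_n(R)$ is transitive a fortiori. This leaves the critical case $n = \mu$.

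For $n = \mu$, recall from Theorem~\ref{ThSL} that the determinant map induces a bijection from $\V_\mu(M)/\SL_\mu(R)$ onto $\V_1(\det(M))$; in particular, two generating vectors $\mb, \mb' \in \V_\mu(M)$ are $\SL_\mu(R)$-equivalent if and only if $\det(\mb) = \det(\mb')$ in $\det(M)$. Given arbitrary $\mb, \mb' \in \V_\mu(M)$, it therefore suffices to produce $\phi \in \Aut_R(M)$ with $\det(\phi(\mb)) = \det(\mb')$: Theorem~\ref{ThSL} will then supply $g \in \SL_\mu(R)$ with $\phi(\mb)\, g = \mb'$, and since the left and right actions commute this exhibits $(\phi, g)$ carrying $\mb$ to $\mb'$. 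By the functoriality of the exterior power one has $\det(\phi(\mb)) = (\bigwedge^\mu \phi)(\det(\mb))$, where $\bigwedge^\mu \phi$ is the automorphism of the cyclic module $\det(M)$ induced by $\phi$.

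Thus the whole matter reduces to the homomorphism $\Aut_R(M) \to \Aut_R(\det(M))$, $\phi \mapsto \bigwedge^\mu \phi$. Since $\det(M)$ is cyclic with annihilator $\Fitt_{\mu-1}(M)$, it is isomorphic to $R/\Fitt_{\mu-1}(M)$, its automorphism group is $(R/\Fitt_{\mu-1}(M))^\times$ acting by scalar multiplication, and the two generators $\det(\mb)$, $\det(\mb')$ of this cyclic module differ by multiplication by one such unit. The key step—really the only point that uses the hypotheses on $R$—is therefore to show that $\Aut_R(M) \to (R/\Fitt_{\mu-1}(M))^\times$ is surjective. Here I would invoke the invariant decomposition of $M$ (Theorems~\ref{ThEDRDecomposition} and~\ref{ThInvariantDecomposition}): writing $M \cong \bigoplus_{i=1}^{\mu} R/I_i$ with $I_1 \subseteq \cdots \subseteq I_\mu = \Fitt_{\mu-1}(M)$, the automorphism that scales the top summand $R/I_\mu$ by a unit $\bar v \in (R/I_\mu)^\times$ and fixes the remaining summands induces multiplication by $\bar v$ on $\det(M)$. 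As $\bar v$ ranges over $(R/I_\mu)^\times$ this realizes every unit, yielding the surjectivity. Choosing $\phi$ to realize the unit relating $\det(\mb)$ and $\det(\mb')$ completes the argument; once the decomposition into cyclic summands is in hand the surjectivity is the elementary observation just made, which is why the corollary follows immediately.
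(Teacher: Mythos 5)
Your strategy is exactly the paper's: the corollary is stated there as an immediate consequence of Theorem~\ref{ThSL} plus the invariant decompositions, and your reduction (handle $n>\mu$ by transitivity of $\SL_n(R)$, and for $n=\mu$ show that $\Aut_R(M)\to\Aut_R(\det(M))\simeq(R/\Fitt_{\mu-1}(M))^{\times}$ is onto by scaling the cyclic summand whose annihilator is $\Fitt_{\mu-1}(M)$) is the intended argument, and it is correct.

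One inaccuracy to repair: the decomposition $M\cong\bigoplus_{i=1}^{\mu}R/I_i$ into cyclic summands is only available in the elementary divisor case (Theorem~\ref{ThEDRDecomposition}). For an almost-LG coherent Prüfer ring, Theorem~\ref{ThInvariantDecomposition} gives a cyclic torsion part $R/I_1\times\cdots\times R/I_m$ together with a projective part built from invertible, possibly non-principal, ideals, so $M$ is not in general a sum of cyclics. The argument survives nonetheless: if $m\ge 1$, the summand $R/I_1$ is cyclic with $I_1=\Fitt_{\mu-1}(M)$ (Proposition~\ref{PropWithIAndSomeTorsion}.$(i)$), and scaling it by a unit while fixing the complement $B$ still multiplies $\det(M)\simeq (R/I_1)\otimes\bigwedge^{\mu-1}B$ by that unit, since $\bigwedge^{\mu}B=0$; if $m=0$ then $\Fitt_{\mu-1}(M)=R$, so $\det(M)=0$, $\V_1(\det(M))$ is a singleton, and $\SL_{\mu}(R)$ is already transitive by Theorem~\ref{ThSL}. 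With that caveat added (and the routine reduction to the idempotent factors $e_jM$), your proof is complete.
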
 
 
 We finally consider the action of $\E_n(R)$ on generating vectors.
 Evidently, Theorem \ref{ThSL} provides us with a description of $\V_n(M)/\E_n(R)$ for $n \ge \mu(M)$ if $\SL_n(R) = \E_n(R)$. 
 If the latter identity holds, the ring $R$ is termed a \emph{$\GE_n$-ring}. If $R$ is $\GE_n$ ring for every $n \ge 2$, then $R$ is termed a \emph{$\GE$-ring}. 
 The class of $\GE$-rings includes  quasi-Euclidean rings, rings of stable rank $1$ and 
 Dedekind domains of arithmetical type with infinitely many units \cite{Vas72}. 
 By taking direct products of $\GE$-rings, we can obtain non-$\GE$-rings for 
 which $\E_n(R)$ can replace $\SL_n(R)$ in Theorem \ref{ThSL}.
 It is well-known that the class of $\GE_n$-ring is stable under the formation of direct products with finitely many factors \cite[Theorem 3.1]{Coh66}, but not for infinitely many ones. 
 A typical counter-example is  $\prod_{x} \mathbb{Z}$ where $x$ ranges in an infinite set (use, e.g., \cite[Lemma 5.1]{Coh66}). Although the latter ring is not $\GE_2$, it is a $\GE_n$-ring for every $n > 2$.  
This follows from \cite[Main Theorem]{CK83} which implies more generally that an arbitrary direct product of rings of integers or their localizations, is a $\GE_n$-ring for every $n > 2$. 
 For such products, combining Theorem \ref{ThSL} with Proposition \ref{PropStability} yields a description of $\V_{\mu}(M)/\E_{\mu}(R)$ provided that $\mu = \mu(M) > 2$.

If $R$ is not a $\GE_n$-ring, describing $\V_n(M)/\E_n(R)$ is already challenging when we restrict to $M = R$ and $n = 2$. In this case, 
$\V_2(R)/\E_2(R)$  identifies with $\E_{12}(R)\backslash \SL_2(R) / \E_2(R)$ where 
$\E_{12}(R)$ is the group of matrices of the form 
$
\begin{pmatrix}
1 & r \\
0 & 1
\end{pmatrix}
$
with $r \in R$ (see Lemma \ref{LemRPlusCyclicE}).
If for instance $R$ is the ring of integers of an imaginary quadratic field $\Q(\sqrt{-d})$ that is not $\GE_2$,
 then it follows from \cite[Theorem 2.4]{FF88} together with 
 the Normal Form Theorem for Free Products with Amalgamation \cite[Theorem IV.2.6]{LS77} that $\V_2(R)/\E_2(R)$ is infinite; 
 see \cite[Theorem 1.5]{Nic11} and \cite{She17} for an elementary proof of this fact. 
The quotient $\V_2(R)/\E_2(R)$ clearly surjects on the quotient of $\SL_2(R)$ by the subgroup of unipotent matrices. For infinitely 
many $d > 0$, this quotient surjects in turn on a non-Abelian free group \cite[Proof of Theorem 1.2]{GMV94}.
With our last result we depart from such intricate situations, assuming that $\mu(M) > 2$ and that $M$ has a non-trivial torsion submodule.
Under these assumptions, we can describe $\V_n(M)/\E_n(R)$ by means of the determinant map and the special Whitehead group $\SK_1(R)$ of $R$ 
(see Section \ref{SubSecDirectProductsOfRings} for the definition of $\SK_1$).
Let us denote by $\tau(M)$ the \emph{torsion submodule} of $M$, that is, 
$$\tau(M) \Doteq \{ m \in M \, \vert \, rm = 0 \text{ for some } r \in R \text{ which is not a zero divisor}\}.$$
A ring $R$ is a \emph{Bézout ring} if its finitely generated ideals are principal. 
\begin{Atheorem} \label{ThSK1}
Assume that $R$ is a Bézout ring whose proper quotients have stable rank $1$ or 
that  $R$ is an almost local-global coherent Prüfer ring (see Section \ref{SecEDRAndArithmetic} for definitions). 
Let $M$ be a finitely presented $R$-module. 
Let $e$ be an idempotent of $R$ such that $\mu(eM) = \mu(M)$ and set $\mu' = \mu(\tau(M))$, $\mu = \mu(M)$. 
Then the following hold.
\begin{itemize}
\item[$(i)$]
If $\mu' > 1$ or $\mu = 1$, then the determinant map induces a bijection from  from $\V_{\mu}(M)/\E_{\mu}(R)$ onto $\V_1(\det(M))$.
\item[$(ii)$]
If $\mu' = 1$ and $\mu >  2$, then $\V_{\mu}(M)/\E_{\mu}(R)$ is equipotent with $\V_1(\det(M)) \times \SK_1(R)$.
\item[$(iii)$]
If either $\mu' \neq 0$, or $\mu' = 0$ and $eM$ surjects onto a non-principal invertible ideal of $Re$, then $\E_n(R)$ acts transitively on $\V_n(M)$ for every $n > \mu$. 
This holds in particular if $R$ is an almost local-global Prüfer domain and $M$ is torsion-free but not free.
\item[$(iv)$] 
The group $\E_n(R)$ acts transitively on $\V_n(M)$ for every $n > \mu + 1$.
\end{itemize}
\end{Atheorem}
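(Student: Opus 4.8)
The plan is to derive all four parts from Theorem \ref{ThSL} by analysing, inside a single $\SL_n(R)$-orbit, how it decomposes into $\E_n(R)$-orbits. Fix $n \ge \mu$ and a generating vector $\mb_0 \in \V_n(M)$, and write $\operatorname{Stab}(\mb_0)$ for its stabiliser in $\SL_n(R)$. The $\E_n(R)$-orbits contained in $\mb_0\,\SL_n(R)$ are then in bijection with the double coset space $\operatorname{Stab}(\mb_0)\backslash \SL_n(R)/\E_n(R)$. For $n \ge 3$ the group $\E_n(R)$ is normal in $\SL_n(R)$ (Suslin's normality theorem), so this is the coset space of the image $\overline{\operatorname{Stab}(\mb_0)}$ of the stabiliser in the quotient $\SL_n(R)/\E_n(R)$. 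A fibre of the induced map $\V_n(M)/\E_n(R)\to \V_n(M)/\SL_n(R)$ is therefore a single point exactly when $\overline{\operatorname{Stab}(\mb_0)}=\SL_n(R)/\E_n(R)$, and it is a full copy of $\SK_1(R)$ when $\overline{\operatorname{Stab}(\mb_0)}$ is trivial and $\SL_n(R)/\E_n(R)\cong\SK_1(R)$. Since Theorem \ref{ThSL} already identifies $\V_{\mu}(M)/\SL_{\mu}(R)$ with $\V_1(\det(M))$ and provides a single $\SL_n(R)$-orbit for $n>\mu$, the whole statement reduces to computing $\overline{\operatorname{Stab}(\mb_0)}$ and to pinning down the isomorphism type of $\SL_n(R)/\E_n(R)$.

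To compute the stabiliser I would use the invariant decomposition (Theorems \ref{ThEDRDecomposition} and \ref{ThInvariantDecomposition}) to write $M\cong \bigoplus_{i=1}^{\mu'} R/\ia_i \oplus P$ with $0\neq\ia_i\subsetneq R$ and $P$ the projective part of rank $\mu-\mu'$, and take $\mb_0$ to be the associated standard generating vector, padded with $n-\mu$ zeros when $n>\mu$. Inspection shows that $\operatorname{Stab}(\mb_0)$ consists of the matrices whose entries satisfy, in each torsion row $j$, the congruence $g_{ji}\equiv\delta_{ji}\pmod{\ia_j}$, are exact on the projective rows, and are unconstrained on the zero rows; the block supported on the projective and zero columns is cleared by a product of elementary matrices. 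Hence, modulo $\E_n(R)$, the stabiliser is generated by the congruence blocks attached to the coordinates of nonzero annihilator, a zero coordinate behaving as one of annihilator $R$.

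At level $n=\mu$ this gives $\overline{\operatorname{Stab}(\mb_0)}$ equal to the image of $\Gamma\Doteq\{A\in\SL_{\mu'}(R):A_{ji}\equiv\delta_{ji}\pmod{\ia_j}\}$. The case $\mu=1$ is immediate. When $\mu'=1$ we have $\Gamma\subseteq\SL_1(R)=\{1\}$, so each fibre is all of $\SL_{\mu}(R)/\E_{\mu}(R)\cong\SK_1(R)$, which yields $(ii)$. When $\mu'\ge2$, two torsion coordinates with smallest annihilator $\ia$ force $\Gamma$ to contain the relative group $\SL_2(R,\ia)$, and I would show its image fills $\SK_1(R)$, so the fibre collapses and $(i)$ follows. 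For $(iii)$ and $(iv)$ the redundant zero coordinates supply the room: when $n\ge\mu+2$, a torsion coordinate together with two zero coordinates (or, if $\mu'=0$ is excluded here, three zero coordinates) embeds $\SL_3(R,\ia)$ in the stabiliser; reducing modulo $\ia$, using that the proper quotient $R/\ia$ has stable rank $1$ so that $\SK_1(R/\ia)=0$ and $\SL_3(R/\ia)=\E_3(R/\ia)$, and lifting the resulting elementary factorisation, one realises every class of $\SK_1(R)$ inside $\overline{\operatorname{Stab}(\mb_0)}$, whence transitivity. For $n=\mu+1$ only a single torsion coordinate is available, producing $\SL_2(R,\ia)$ and transitivity precisely under the hypotheses of $(iii)$; the torsion-free case $\mu'=0$ is then handled through the non-principal invertible ideal furnished by the Steinitz form of $P$, which plays the role of the missing torsion coordinate.

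The crux, on which $(i)$ and the $n=\mu+1$ part of $(iii)$ rest, is the surjectivity onto $\SK_1(R)$ of the map induced by the relative group $\SL_2(R,\ia)$ for a nonzero ideal $\ia$. The relative $K$-theory exact sequence $\SK_1(R,\ia)\to\SK_1(R)\to\SK_1(R/\ia)$ together with $\SK_1(R/\ia)=0$ shows that $\SK_1(R,\ia)\to\SK_1(R)$ is onto; the delicate point is that the relative group is already generated at level $2$, that is, that the relevant relative Mennicke symbols lie in the image of $\SL_2(R,\ia)$. I expect to extract this from the Bézout, respectively almost local-global coherent Prüfer, hypothesis by a local-global principle, the idempotent $e$ with $\mu(eM)=\mu(M)$ serving to localise onto the component where $M$ attains its minimal number of generators. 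The second ingredient, needed so that the collapsing and the $\SK_1(R)$-torsor description are valid already at the smallest admissible $n$, is the stability isomorphism $\SL_n(R)/\E_n(R)\cong\SK_1(R)$ for $n\ge3$, which is sharper than the bound afforded by $\sr(R)\le2$ alone and which I would establish for the present rings from the same structural results. These two facts are where I anticipate the main difficulty; once they are available, parts $(i)$–$(iv)$ follow by assembling the orbit count above.
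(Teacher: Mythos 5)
Your orbit-counting framework---identifying, for $n\ge 3$, the fibre of $\V_n(M)/\E_n(R)\to\V_n(M)/\SL_n(R)$ over the orbit of $\mb_0$ with the quotient of $\SL_n(R)/\E_n(R)$ by the image of the stabiliser---is sound, and for part $(ii)$ it essentially reproduces what the paper does in Lemma \ref{LemSK1}. But as written the argument has genuine gaps. The step everything else rests on, namely that the image of $\SL_2(R,\ia)$ in $\SK_1(R)$ is all of $\SK_1(R)$ for a nonzero ideal $\ia$, is not proved: you defer it to an unspecified local-global principle about relative Mennicke symbols being ``generated at level $2$''. This is the load-bearing claim for $(i)$ and for the $n=\mu+1$ case of $(iii)$, and it must be supplied. (It is true, and by a much shorter route than the one you sketch: since $\sr(R)\le 2$, surjective stability represents any class of $\SK_1(R)$ by some $\sigma\in\SL_2(R)$; since $\sr(R/\ia)=1$, the reduction $\overline{\sigma}$ lies in $\SL_2(R/\ia)=\E_2(R/\ia)$, and lifting its elementary factorisation through $\E_2(R)\twoheadrightarrow\E_2(R/\ia)$ gives $\varepsilon\in\E_2(R)$ with $\sigma\varepsilon^{-1}\in\SL_2(R,\ia)$ in the same class. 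The paper sidesteps the issue entirely by performing the elementary reductions directly, via Propositions \ref{PropInvariantDecomposition} and \ref{PropInvariantDecompositionKPlusD}.) Note also that the isomorphism $\SL_n(R)/\E_n(R)\simeq\SK_1(R)$ for $n\ge 3$ is not ``sharper than the bound afforded by $\sr(R)\le 2$''; it is exactly the standard stability theorem at $n\ge\sr(R)+1$, which the paper cites, so no new structural input is needed there.

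The second gap is in the stabiliser computation, which you do by ``inspection'' under the tacit assumption that the projective part $P$ is free (entries ``exact on the projective rows''). In the almost local-global coherent Prüfer case $P\simeq I\times R^l$ with $I$ invertible and non-principal, and the minimal generating vector does not split along your direct-sum decomposition: $\mu(R/\ia_k\times I)=2$, and the standard generators are $(e_k+ae_{k+1},\,be_{k+1})$ as in Proposition \ref{PropWithIAndSomeTorsion}, so the stabiliser has no block-congruence description and involves the syzygies of $(a,b)$ in $I$. Relatedly, the $\mu'=0$ branch of $(iii)$---which the paper handles by proving $\er(I)\le 2$ directly through Propositions \ref{PropOneAndAHalf}, \ref{PropTransitivityPP} and \ref{PropTorsionFree}---is only gestured at: the stabiliser of a generating vector of $I\times R^l$ padded with a single zero does not visibly contain a congruence subgroup, so your mechanism does not apply as stated and a separate argument is required.
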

A Bézout ring $R$ such that the proper quotients of $R/\Jac(R)$ have stable rank $1$ is an elementary divisor ring \cite[Proposition 3.5 and Theorem 3.7]{McGov08}. 
To the best of our knowledge, all known examples of elementary divisor rings satisfy this property (this is corroborated by \cite[Remark 4.7]{McGov08}).
Combining Theorem \ref{ThSK1} with Proposition \ref{PropERank} below, we gain thus insight on all such examples.

\paragraph{Final Remarks} 
The proof Theorem \ref{ThSL} hinges on the fact that ideals are two-generated for rings in $\mathcal{D}$ (see specifically the results of Section \ref{SecI}).
The class of Bass rings is another well-studied class of rings whose ideals are two-generated \cite[§4]{Lam00} 
and whose finitely generated torsion-free modules are well-understood \cite{LW85}.
The Bass rings constitute therefore a class of choice to pursue
 the study of equivalent generating vectors in the spirit of Theorems \ref{ThSL} and \ref{ThSK1}.

\paragraph{Layout} This paper is organized as follows. Section \ref{SecNotation} introduces notation and background results. 
It encloses some of the fundamental properties of the determinant map, of the Bass stable rank 
and of the class $\mathcal{D}$. Section \ref{SecI} is dedicated to the study of the action of $\SL_n(R)$ on $\V_n(I)$ for $I$ a  two-generated ideal of $R$. 
Section \ref{SecDirectProducts} addresses the finitely generated modules which admit an invariant decomposition 
in the sense of Theorems \ref{ThEDRDecomposition} and \ref{ThInvariantDecomposition}. 
 However, the 
idempotents appearing in the decomposition of Theorem  \ref{ThInvariantDecomposition} are only handled in 
Section \ref{SubSecDirectProductsOfRings} where Proposition \ref{PropStability} is proven. 
Section \ref{SecProofs} is dedicated to the proofs of the Theorems \ref{ThSL} and \ref{ThSK1}.
 
\paragraph{Acknowledgments}
We are grateful to François Couchot and Jean Fasel for their encouragements. 
We are thankful to Justin Chen, Henri Lombardi, Bogdan Nica and Wilberd van der Kallen 
for helpful comments and references.

\section{Notation and background} \label{SecNotation}

We assume throughout that $R$ is a commutative unital ring and $M$ is a finitely generated $R$-module. 
We denote by $\mathcal{J}(R)$ the \emph{Jacobson radical} of $R$, that is, the intersection of the maximal ideals of $R$.
If $\ip$ is a prime ideal of $R$, we denote by $\Rp$ the localization of $R$ at $\ip$. 
Similarly, we denote by $\Mp$ its localization at $\ip$, which we identify with $M \otimes_R \Rp$. 
Given an element $m$ in $M$, we abuse notation in denoting also by $m$ its image $m\otimes_R 1 \in \Mp$. 
If $M$ and $N$ are two submodules of a given $R$-module, we denote by $(M:N)$ the ideal consisting of all elements $r \in R$ such that $rN \subseteq M$. 
We define the \emph{annihilator $\ann_R(M)$ of $M$} as $(0: M)$.
If $Rm$ and $Rm'$ are two cyclic submodules of $M$, we simply write $(m : m')$ for $(Rm : Rm')$. 

\paragraph{Regular elements}
An element $r$ of a ring $R$ is a \emph{zero divisor} if $\ann_R(r) \Doteq (0: r) \neq \{0\}$. An element of $R$ is \emph{regular} if it is not a zero divisor.
 An ideal of $R$ is \emph{regular} if it contains a regular element.
 The \emph{total ring of quotients $K(R)$} of $R$ is the localization $R[S^{-1}]$ where $S$ is the set of regular elements of $R$. 
 The above definition of a regular element should not be confused with the definition of a \emph{von Neumann regular element} of $ R$, that is an element for which there exists
 $a \in R$ satisfying $r = ara$. A ring whose elements are von Neumann regular is called a \emph{von Neumann regular ring}.

\paragraph{Matrices over $R$ and $M$}
 Given two $1 \times n$ row vectors $\rb = (r_1, \dots, r_n) \in R^n$ and $\mb = (m_1,\dots, m_n) \in M^n$,  
 we denote by $\rb^{\top}$ and $\mb^{\top}$ the $n \times 1$ column vectors obtained by transposition and 
 we define their product $\rb \mb^{\top} = \mb \rb^{\top} \Doteq \sum_i r_i m_i$. 
 Based on these identities, the product of any two matrices over $R$ and $M$, with compatible numbers of rows and columns, is uniquely defined. 
We denote respectively by $0_{m \times n}$ and by ${1_n}$ the $m \times n$ zero matrix and the $n \times n$ identity matrix over $R$.

\subsection{Fitting ideals and the determinant module} \label{SubSecFittAndDet}
The invariant on which our study of the action of $\SL_{\mu}(R)$ on $\V_{\mu}(M)$ hinges is the determinant map. 
This map can be effectively computed thanks to the $\mu$th-Fitting ideal.

\paragraph{Fitting ideals}
Let 
$$
F \xrightarrow[]{\varphi} G \rightarrow M \rightarrow 0 
$$
be an exact sequence where $F$ and $G$ are free $R$-modules and $G$ is finitely generated. Let $I_i(\varphi)$ 
the ideal of $R$ generated by the $i \times i$ minors of the matrix of $\varphi: F \rightarrow G$, agreeing that $I_0(\varphi) = R$. 
Then the \emph{$(i + 1)$-th Fitting ideal of $M$} is $\Fitt_i(M) \Doteq I_{\mu(G) - i}(\varphi)$. 
The Fitting ideals are independent of $\varphi$ by Fitting's lemma \cite[Corollary 20.4]{Eis95}. 
They  commute with \emph{base change}, i.e, $\Fitt_i(M \otimes_R S) = \Fitt_i(M) \otimes_R S$ for any $R$-algebra $S$ \cite[Corollary 20.5]{Eis95}, and hence with localization.

\paragraph{The determinant of a finitely generated module} 
Let $\mu = \mu(M)$.
Recall that the determinant $\det(M)$ of $M$ is the exterior product $\bigwedge^{\mu}M$. 
This is a cyclic $R$-module whose annihilator is $\Fitt_{\mu - 1}(M)$ \cite[Exercise 20.9.i]{Eis95}.
Given $\mb = (m_1, \dots, m_{\mu}) \in \V_{\mu}(M)$, we denote by $\phi_{\mb}: \det(M) \rightarrow R/\Fitt_{\mu - 1}(M)$ 
the isomorphism induced by the map $m_1 \wedge  \cdots  \wedge m_{\mu} \rightarrow 1 + \Fitt_{\mu - 1}(M)$. 
Given another generating pair $\mb' = (m_1', \dots, m_{\mu}') \in M^{\mu}$, we define 
$$\text{det}_{\mb}(\mb') \Doteq \phi_{\mb}(m_1' \wedge \dots \wedge m_{\mu}').$$
It is easy to check that $\det_{\mb}(\V_{\mu}(M))$ is a subgroup of $(R/\Fitt_{\mu - 1}(M))^{\times}$ which doesn't depend on $\mb$.
The following lemma is straightforward.

\begin{lemma} \label{LemDet}
Let  $\mb, \mb', \mb'' \in \V_{\mu}(M)$.
Then the following assertions hold.
\begin{itemize}
\item[$(i)$] $\det_{\mb}(\mb' A) = \det_{\mb}(\mb')\det(A)$, for every $\mu \times \mu$ matrix $A$ over $R$.
\item[$(ii)$] $\det_{\mb}(\mb'') = \det_{\mb}(\mb') \det_{\mb'}(\mb'')$.
\item[$(iii)$] $\det_{\mb}(\mb') = 1 + \Fitt_{\mu - 1}(M)$, if and only if, $\det_{\mb}(\mb') = 1 + \Fitt_{\mu - 1}(M_{\ip})$ 
for every maximal ideal $\ip$ of $R$, where $\mb$ and $\mb'$ denote (abusively) their natural images in $\V_{\mu - 1}(M_{\ip})$ in the left-hand side of the identity.
\end{itemize}
\end{lemma}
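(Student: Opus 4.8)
The plan is to establish $(i)$ directly from the formal properties of the exterior product, then to deduce $(ii)$ from $(i)$, and to obtain $(iii)$ from the base-change compatibility of Fitting ideals. For $(i)$, writing $A = (a_{ij})$, the $j$-th component of $\mb'A$ is $\sum_i a_{ij}m_i'$, so by multilinearity and the alternating property of the wedge,
\[
\bigwedge_{j=1}^{\mu}\Bigl(\textstyle\sum_i a_{ij}m_i'\Bigr) = \det(A)\,(m_1'\wedge\cdots\wedge m_{\mu}')
\]
in $\bigwedge^{\mu}M$. Since $\phi_{\mb}$ is $R$-linear and the $R$-action on $\det(M)$ factors through $R/\Fitt_{\mu-1}(M)$, applying $\phi_{\mb}$ yields $\det_{\mb}(\mb'A) = \det_{\mb}(\mb')\det(A)$, with $\det(A)$ read modulo $\Fitt_{\mu-1}(M)$.

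For $(ii)$ I would use that $\det_{\mb}(\mb) = \phi_{\mb}(m_1\wedge\cdots\wedge m_{\mu}) = 1$. As the components of $\mb$ generate $M$, there is a $\mu\times\mu$ matrix $B$ with $\mb' = \mb B$; by $(i)$ this gives $\det_{\mb}(\mb') = \det(B)$ modulo $\Fitt_{\mu-1}(M)$ (in particular the residue of $\det(B)$ is independent of the choice of $B$). Similarly, since $\mb'$ generates $M$, there is $C$ with $\mb'' = \mb'C$, whence $\det_{\mb'}(\mb'') = \det(C)$ and $\mb'' = \mb(BC)$. One more application of $(i)$ together with the multiplicativity of $\det$ on matrices gives $\det_{\mb}(\mb'') = \det(BC) = \det(B)\det(C) = \det_{\mb}(\mb')\det_{\mb'}(\mb'')$ in $R/\Fitt_{\mu-1}(M)$.

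Finally, for $(iii)$ the main point is that $\det_{\mb}$ commutes with localization. Exterior powers and Fitting ideals commute with the base change $R\to\Rp$, so $\bigwedge^{\mu}\Mp = (\det M)_{\ip}$ and $\Fitt_{\mu-1}(\Mp) = \Fitt_{\mu-1}(M)_{\ip}$; consequently $\phi_{\mb}$ localizes to the isomorphism attached to the images of $\mb$ in $\Mp$, and the localization of $\det_{\mb}(\mb')$ is exactly the quantity on the right-hand side. The assertion then reduces to the standard local-global principle for the single element $x = \det_{\mb}(\mb') - 1$ of the $R$-module $R/\Fitt_{\mu-1}(M)$: one has $x = 0$ if and only if its image vanishes in $(R/\Fitt_{\mu-1}(M))_{\ip}$ for every maximal ideal $\ip$, since $\ann_R(x)$ equals $R$ exactly when it is contained in no maximal ideal. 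I expect no genuine obstacle here; the only step requiring care is this localization bookkeeping, namely confirming that localizing the globally defined map $\det_{\mb}$ returns the locally defined one, which is precisely where the base-change property of Fitting ideals enters.
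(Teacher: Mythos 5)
Your proof is correct; the paper itself gives no argument for this lemma (it is stated as "straightforward" and the proof is omitted), and your write-up supplies exactly the standard details one would expect: multilinearity of the wedge for $(i)$, the matrix factorizations $\mb'=\mb B$, $\mb''=\mb'C$ for $(ii)$, and base-change of Fitting ideals plus the local-global principle for elements of a module for $(iii)$.
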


$\square$

When there is no risk of ambiguity, we simply denote by $1$ the identity element of $R/\Fitt_{\mu -1}(M)$.
The ideal $\Fitt_{\mu - 1}(M)$ has a convenient description which makes the computation of $\det_{\mb}$ 
effective when a workable presentation of $M$ is given. Let $\mb \in \V_{\mu}(M)$. We say that an element $r \in R$ 
is \emph{involved in a relation of $\mb$} if there is $(r_1,\dots, r_{\mu}) \in R^{\mu}$ such that $ \sum_{i = 1}^{\mu} r_i m_i = 0$ and $r = r_i$ for some $i$. 

\begin{lemma} \label{LemFittMuMinusOne}
Let $\mb \in\V_{\mu}(M)$. Then $\Fitt_{\mu - 1}(M)$  is the set of the elements of $R$ involved in a relation of $\mb$. 
\end{lemma}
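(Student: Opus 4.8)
The plan is to read $\Fitt_{\mu-1}(M)$ off the presentation supplied by $\mb$ itself. Writing $\pi\colon R^{\mu}\twoheadrightarrow M$ for the surjection $e_i\mapsto m_i$ and $K=\Ker\pi$ for the submodule of relations of $\mb$, I would first recall that $\Fitt_{\mu-1}(M)=\ann(\det M)=\ann(m_1\wedge\cdots\wedge m_{\mu})$ and that, computing the Fitting ideal from any presentation $F\xrightarrow{\varphi}R^{\mu}\to M\to0$ with $\Imm\varphi=K$, one has $\Fitt_{\mu-1}(M)=I_1(\varphi)$, the ideal generated by the entries of a matrix of $\varphi$. Since those entries are exactly the coordinates of a generating family of $K$, this reads $\Fitt_{\mu-1}(M)=\sum_{i=1}^{\mu}\pi_i(K)$, where $\pi_i\colon K\to R$ is the $i$th coordinate. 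On the other hand, the set of elements involved in a relation of $\mb$ is precisely $\bigcup_{i=1}^{\mu}\pi_i(K)$. Thus the lemma asserts that this union of ideals already equals their sum.

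The inclusion $\bigcup_i\pi_i(K)\subseteq\Fitt_{\mu-1}(M)$ is immediate: a coordinate of a relation is an entry of $\varphi$, hence lies in $I_1(\varphi)$; equivalently, if $r m_i\in\sum_{j\ne i}Rm_j$ then $m_1\wedge\cdots\wedge(rm_i)\wedge\cdots\wedge m_{\mu}$ has a repeated factor, so $r\,(m_1\wedge\cdots\wedge m_{\mu})=0$ and $r\in\ann(\det M)$. This already shows that $\Fitt_{\mu-1}(M)$ is generated by the involved elements, so the whole content of the lemma is the reverse inclusion, namely that every element of $\Fitt_{\mu-1}(M)$ is itself a single coordinate of some relation. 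Equivalently, I must prove that the involved set $\bigcup_i\pi_i(K)$ is closed under addition, hence is an ideal.

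This closure is the main obstacle, and it is here that the minimality $\mu=\mu(M)$ must enter. I would argue after localization: the Fitting ideal commutes with localization and equality can be tested at the maximal ideals as in Lemma \ref{LemDet}$(iii)$, so it suffices to treat $\Mp$ over $\Rp$. Over a local ring a minimal generating family admits no relation with a unit coordinate, by Nakayama, so every coordinate of every relation lies in the maximal ideal; using the invariant decomposition of $\Mp$ (Theorems \ref{ThEDRDecomposition} and \ref{ThInvariantDecomposition}) one can choose the generators so that the ideals $\pi_i(K)$ form a descending chain $\ia_1\supseteq\cdots\supseteq\ia_{\mu}$, whence their union is $\ia_1$ and coincides with their sum. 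The delicate point, and the reason minimality by itself is not formally enough in full generality, is that the involved set depends on the chosen $\mb$ and a union of ideals need not be an ideal; the equality genuinely uses that the modules at hand possess an invariant decomposition, which forces the coordinate ideals to be comparable. The final step is to transfer the conclusion back from the adapted generating family to the arbitrary $\mb$, using that $\Fitt_{\mu-1}(M)$ is intrinsic to $M$ while both sides are compatible with localization.
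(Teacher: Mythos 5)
The paper records this lemma with no proof at all (it is declared straightforward), so there is no argument to compare yours against step by step; what I can say is that your analysis correctly isolates the real content. Writing $K=\Ker(R^{\mu}\twoheadrightarrow M)$ for the relation module of $\mb$ and $\pi_i$ for the coordinate maps, you are right that $\Fitt_{\mu-1}(M)=I_1(\varphi)=\sum_{i}\pi_i(K)$, that the involved set is $\bigcup_i\pi_i(K)$, and your proof of the inclusion $\bigcup_i\pi_i(K)\subseteq\Fitt_{\mu-1}(M)$ is complete and correct. You have also correctly identified that everything hinges on the union of the coordinate ideals being equal to their sum.

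The reverse inclusion is where your argument does not close, and in fact cannot close at the stated level of generality. Your plan invokes the invariant decomposition theorems, which are only available for finitely presented modules over rings in $\mathcal{D}$, whereas the lemma is stated for an arbitrary finitely generated module over an arbitrary commutative ring; more fatally, the final step of ``transferring the conclusion back to the arbitrary $\mb$'' is unavailable because the involved set is not an invariant of $M$: replacing $\mb$ by $\mb\sigma$ replaces $K$ by $\sigma^{-1}K$, which can change $\bigcup_i\pi_i(K)$ even though it cannot change $\sum_i\pi_i(K)$. Indeed the statement as written is false: take $R=k[x,y]/(x,y)^2$, $M=R/Rx\times R/Ry$ and $\mb=(e_1,e_2)$. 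Then $\mu(M)=2$ and $K=Rx\times Ry=kx\times ky$, so the set of involved elements is $kx\cup ky$, while $\Fitt_{1}(M)=(x,y)$ contains $x+y\notin kx\cup ky$. What is true in general, and is all that the paper's subsequent applications actually use, is that $\Fitt_{\mu-1}(M)$ is the \emph{ideal generated by} the elements involved in a relation of $\mb$; for the particular generating vectors to which the lemma is applied later (those adapted to an invariant decomposition, where the ideals $\pi_i(K)$ form a descending chain) the union is already an ideal and the set-level statement does hold --- which is precisely the chain observation your sketch contains. So the correct conclusion is not that your proof is missing a trick, but that the lemma needs either the extra hypothesis your argument identifies or the weaker ``ideal generated by'' formulation.
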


$\square$
 
Let $\overline{\mb} = \pi(\mb)$ be the image of $\mb \in V_{\mu}(M)$ by the natural map $\pi : M \rightarrow M/\Fitt_{\mu - 1}(M)M$ 
 and let $\eb$ be the canonical basis of $(R/\Fitt_{\mu - 1}(M))^{\mu}$. Then the map $\overline{\mb} \mapsto \eb$ induces an 
 isomorphism $\varphi_{\mb}$ from $M/\Fitt_{\mu - 1}(M)M$ onto $(R/\Fitt_{\mu - 1}(M))^{\mu}$.
The next lemma shows how the map $\pi$ can be used to compute $\det_{\mb}$.
\begin{lemma} \label{LemDetPi}
Let $\mb, \mb' \in \V_{\mu}(M)$. Then $\det_{\mb}(\mb')$ is the determinant of the unique $\mu \times \mu$ matrix $\overline{A}$ over $R/\Fitt_{\mu - 1}(M)$ 
satisfying $\pi(\mb') = \pi(\mb) \overline{A}$, that is the matrix of $\varphi_{\mb} \circ \pi(\mb')$ with respect to $\eb$.
\end{lemma}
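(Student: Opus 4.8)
The plan is to factor the isomorphism $\phi_{\mb}$ through the top exterior power of $\ovM \Doteq M/\Fitt_{\mu - 1}(M)M$, and then to recover $\det(\overline{A})$ from the alternating multilinearity of the wedge. First I would record what the preceding paragraph already supplies: $\varphi_{\mb}$ is an isomorphism sending $\pi(\mb) = (\overline{m}_1, \dots, \overline{m}_{\mu})$ to the canonical basis $\eb$ of $\ovR^{\mu}$, where $\ovR = R/\Fitt_{\mu - 1}(M)$. In particular $\pi(\mb)$ is a free $\ovR$-basis of $\ovM$, so there is a unique matrix $\overline{A}$ over $\ovR$ with $\pi(\mb') = \pi(\mb)\overline{A}$, and its $j$-th column is $\varphi_{\mb}(\pi(m_j'))$. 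This is exactly the matrix of $\varphi_{\mb}\circ\pi$ evaluated on $\mb'$ with respect to $\eb$, which settles the parenthetical reformulation.

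The crux is the identification $\bigwedge^{\mu}M \cong \bigwedge^{\mu}\ovM$. Since $\det(M) = \bigwedge^{\mu}M$ is annihilated by $\Fitt_{\mu - 1}(M)$ it is already an $\ovR$-module, so tensoring with $\ovR$ leaves it unchanged; combined with the fact that exterior powers commute with base change and that $M \otimes_R \ovR = \ovM$, the map induced by $\pi$ yields
$$\textstyle \bigwedge^{\mu}M = \big(\bigwedge^{\mu}M\big)\otimes_R \ovR = \bigwedge^{\mu}\big(M \otimes_R \ovR\big) = \bigwedge^{\mu}\ovM,$$
under which $m_1 \wedge \cdots \wedge m_{\mu}$ corresponds to $\overline{m}_1 \wedge \cdots \wedge \overline{m}_{\mu}$. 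Both $\phi_{\mb}$ and the induced map $\bigwedge^{\mu}\varphi_{\mb}\colon \bigwedge^{\mu}\ovM \to \bigwedge^{\mu}\ovR^{\mu} = \ovR$ send this element to $1$; as $\det(M)$ is cyclic and generated by it, the two maps coincide, so $\phi_{\mb}$ is the composite of the above identification with $\bigwedge^{\mu}\varphi_{\mb}$.

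It then remains to compute. Applying $\bigwedge^{\mu}\varphi_{\mb}$ to $\overline{m}_1' \wedge \cdots \wedge \overline{m}_{\mu}'$ (writing $\overline{m}_j' = \pi(m_j')$) and using that $\varphi_{\mb}(\overline{m}_j') = \sum_i \overline{A}_{ij}\, e_i$ is the $j$-th column of $\overline{A}$, alternating multilinearity gives
$$\det{}_{\mb}(\mb') = \varphi_{\mb}(\overline{m}_1') \wedge \cdots \wedge \varphi_{\mb}(\overline{m}_{\mu}') = \det(\overline{A})\,(e_1 \wedge \cdots \wedge e_{\mu}) = \det(\overline{A}),$$
since the wedge of the columns of a square matrix computes its determinant and $e_1 \wedge \cdots \wedge e_{\mu} = 1$. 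I expect the only delicate point to be the identification $\bigwedge^{\mu}M \cong \bigwedge^{\mu}\ovM$ together with the matching of normalizations: it hinges precisely on $\Fitt_{\mu - 1}(M)$ being the annihilator of $\det(M)$, which guarantees both that passing to $\ovR$ does not alter the top exterior power and that $\phi_{\mb}$ and $\bigwedge^{\mu}\varphi_{\mb}$ are the same map rather than differing by a unit. Everything else is routine.
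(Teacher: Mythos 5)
Your proof is correct; the paper leaves Lemma \ref{LemDetPi} without proof (it is marked as straightforward), and your argument---factoring $\phi_{\mb}$ through $\bigwedge^{\mu}\varphi_{\mb}$ via the base-change identification $\bigwedge^{\mu}M \cong \bigwedge^{\mu}(M/\Fitt_{\mu-1}(M)M)$, then using alternating multilinearity---is exactly the intended verification. The two points you flag as delicate (that $\Fitt_{\mu-1}(M)$ annihilates $\det(M)$, and that $\phi_{\mb}$ and $\bigwedge^{\mu}\varphi_{\mb}$ agree because they coincide on a generator of a cyclic module) are handled correctly.
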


$\square$

\subsection{Ranks} \label{SubSecRanks}
Following \cite[Section 9]{Rief83} and \cite[Section 6.7.2]{McCR87}, we define the Bass stable rank of a finitely generated $R$-module $M$.
An integer $n > 0$ lies in the \emph{stable range of} 
$M$ if for every $\mb = (m_1,\dots,m_{n + 1}) \in \V_{n + 1}(M)$, there is $(r_1, \dots, r_n) \in R^n$ such that $(m_1 + r_1 m_{n + 1}, \dots, m_{n} + r _nm_{n + 1})$ belongs to $\V_n(M)$. 
If $n$ lies in the stable range of $M$, then so does $k$ for every $k > n$ \cite[Lemma 11.3.3]{McCR87}.
The \emph{stable rank $\sr(M)$ of $M$} is the least integer in the stable range of $M$. 

The Bass stable rank can be characterized in terms of a lifting property for the generating vectors of quotient modules. 

\begin{proposition} \label{PropStableRankAndSurjectivity}
Let $M$ be a finitely generated $R$-module and let $n \ge \mu(M)$. Then the following are equivalent:
\begin{itemize}
\item[$(i)$] $\sr(M) \le n$.
\item[$(ii)$] For every $R$-submodule $N \subseteq M$, the map $\V_n(M) \rightarrow \V_n(M/N)$ sending $(m_1, \dots, m_n)$ to $(m_1 + N, \dots, m_n + N)$ is surjective. 
In other words, every generating $n$-vector of $M/N$ lifts to a generating $n$-vector of $M$.
\end{itemize}
\end{proposition}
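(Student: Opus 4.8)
The plan is to prove the two implications separately; each one amounts to translating the combinatorial stable-range condition into the lifting statement (and back) through a well-chosen submodule.

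I would begin with $(ii) \Rightarrow (i)$, the shorter direction. Starting from an arbitrary $\mb = (m_1, \ldots, m_{n+1}) \in \V_{n+1}(M)$, I apply the hypothesis to the cyclic submodule $N = Rm_{n+1}$. Passing to $M/N$ annihilates the last component, so $(m_1 + N, \ldots, m_n + N)$ already generates $M/N$ and hence lies in $\V_n(M/N)$. The lifting property then furnishes $(m_1', \ldots, m_n') \in \V_n(M)$ with $m_i' - m_i \in N = Rm_{n+1}$ for each $i$; writing $m_i' - m_i = r_i m_{n+1}$, the vector $(m_1 + r_1 m_{n+1}, \ldots, m_n + r_n m_{n+1}) = (m_1', \ldots, m_n')$ is exactly the generating $n$-vector required by the definition of the stable range. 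Thus $n$ lies in the stable range of $M$, i.e. $\sr(M) \le n$.

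For $(i) \Rightarrow (ii)$ I would fix a submodule $N \subseteq M$ together with $(\overline{m}_1, \ldots, \overline{m}_n) \in \V_n(M/N)$, choose arbitrary lifts $m_i \in M$, and record that $M = Rm_1 + \cdots + Rm_n + N$. The point where finite generation of $M$ enters is the reduction to finitely many auxiliary generators: the quotient $M/(Rm_1 + \cdots + Rm_n)$ is finitely generated and coincides with the image of $N$, so I may select $n_1, \ldots, n_s \in N$ with $(m_1, \ldots, m_n, n_1, \ldots, n_s) \in \V_{n+s}(M)$. The core of the proof is then to remove the $s$ surplus entries one by one: since every integer $\ge n$ lies in the stable range of $M$ \cite[Lemma 11.3.3]{McCR87}, I apply the stable-range move to discard $n_s$, which alters the surviving components only by multiples of $n_s$. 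Iterating $s$ times produces $(m_1', \ldots, m_n') \in \V_n(M)$, and this vector lifts $(\overline{m}_1, \ldots, \overline{m}_n)$.

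The step I expect to demand the most care is the bookkeeping in this last reduction. Because each discarded entry $n_s$ lies in $N$, every stable-range correction modifies the remaining components by an element of $N$; one must verify that this invariant is preserved at every stage, so that throughout the process the first $n$ components stay in their prescribed residue classes $\overline{m}_i$ modulo $N$ while the auxiliary components stay inside $N$. Once this invariant is in hand, the final vector automatically lies in $\V_n(M)$ and reduces to the prescribed tuple in $M/N$. No hypothesis on $R$ beyond finite generation of $M$ is needed: the proposition is, in the end, a faithful reformulation of the stable-range definition.
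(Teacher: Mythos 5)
Your proof is correct and is exactly the standard argument that the paper defers to (its own ``proof'' is a one-line reference to \cite[Proposition 4.2]{Guy20}): the reverse direction via the cyclic submodule $Rm_{n+1}$, and the forward direction by padding the lifted tuple with finitely many elements of $N$ and stripping them off with stable-range moves, noting that all corrections stay in $N$. No issues.
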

\begin{proof}
Straightforward, see the proof of \cite[Proposition 4.2]{Guy20}.
\end{proof}

Extending naturally the ranks introduced in \cite[Section 11.3.3]{McCR87} to finitely generated modules, 
we define the \emph{linear rank $\glr_R(M)$} and the  \emph{elementary rank $\er_R(M)$} of $M$ as the least integer $n \ge \mu(M)$ such that $\GL_k(R)$, 
respectively $\E_k(R)$, acts transitively on $\V_k(M)$ for every $k > n$. 
Note that the analogous definition based on $\SL_k(R)$ yields the rank $\glr_R(M)$. 
When there is no risk of ambiguity, we simply write $\glr(M)$ and $\er(M)$ instead of $\glr_R(M)$ and $\er_R(M)$.

It is easy to check that $\SL_2(R)$ acts transitively on $\V_2(R)$. Hence $\glr(R) = 2$ is equivalent to $\glr(R) = 1$. 
A ring $R$ is said to be a \emph{Hermite ring} if $\glr(R) = 1$. 
This is \cite[Definition I.4.6]{Lam06}; Lam's Hermite rings are sometimes called \emph{completable} rings.

Following \cite{Coh66}, we call $R$ a \emph{$\GE_n$-ring} if $\SL_n(R) = \E_n(R)$ and $R$ is said to be 
a \emph{generalized Euclidean ring}, or simply a \emph{$\GE$-ring}, if it is a $\GE_n$-ring for all $n > 1$.

\begin{remark}
The ring $R$ is a $\GE_2$-ring if and only if $\E_2(R)$ acts transitively on $\V_2(R)$. If $R$ is Hermite, then $R$ is a $\GE$-ring if and only if $\er(R) = 1$.  
\end{remark}

We denote by $\Jac(M)$, the \emph{Jacobson radical of $M$}, that is, the intersection of the maximal submodules of $M$ (see \cite[§24]{Lam01} for properties and examples).
\begin{proposition}{\cite[Proposition 11.3.11 and Lemma 11.4.6]{McCR87}}  \label{PropRanks}
Let $M$ be a finitely generated $R$-module and let $N$ be a submodule of $M$. Then the following hold.
\begin{itemize}
\item[$(i)$]  $\er(R) \le \sr(R)$. 
\item[$(ii)$]  $\mu(M) \le \glr(M) \le \er(M)$ and $\sr(M) \le \mu(M) + \sr(R) - 1$. 
\item[$(iii)$] $\sr(M/N) \le \sr(M)$ and equality holds if $N \subseteq \Jac(M)$.
\end{itemize}
\end{proposition}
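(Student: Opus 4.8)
The plan is to treat the three items separately, the lifting criterion of Proposition \ref{PropStableRankAndSurjectivity} being the main tool for the stable-rank statements. For the inequality $\sr(M/N)\le\sr(M)$ in part $(iii)$ I would set $n=\sr(M)$ and verify the criterion of Proposition \ref{PropStableRankAndSurjectivity} for $M/N$. Every submodule of $M/N$ has the form $L/N$ with $N\subseteq L\subseteq M$, and the reduction $\V_n(M/N)\to\V_n(M/L)$ fits into the factorisation $\V_n(M)\to\V_n(M/N)\to\V_n(M/L)$. Since $n=\sr(M)$, Proposition \ref{PropStableRankAndSurjectivity} makes the composite surjective (note $n\ge\mu(M)\ge\mu(M/N)$, so the criterion applies at both ends), hence the second arrow is surjective and $\sr(M/N)\le n$. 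For the reverse inequality when $N\subseteq\Jac(M)$ I would invoke Nakayama's lemma for the module radical: for finitely generated $M$, a tuple generates $M$ if and only if its image generates $M/N$ (if $\sum_i Rx_i+\Jac(M)=M$, then $M/\sum_i Rx_i$ is finitely generated with no maximal submodule, so it vanishes); in particular $\mu(M/N)=\mu(M)$. Setting $m=\sr(M/N)$, I take $\mb\in\V_{m+1}(M)$, shorten its image in $\V_{m+1}(M/N)$ by some $r_1,\dots,r_m\in R$, and lift the very same operation to $M$: the tuple $(m_1+r_1m_{m+1},\dots,m_m+r_mm_{m+1})$ has generating image in $M/N$, hence generates $M$ by the Nakayama fact. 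Thus $m$ lies in the stable range of $M$ and $\sr(M)\le\sr(M/N)$.

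The bound $\mu(M)\le\glr(M)$ is built into the definition of $\glr$, and $\glr(M)\le\er(M)$ follows from $\E_k(R)\subseteq\SL_k(R)$: transitivity of the elementary group forces transitivity of $\SL_k(R)$, and $\glr$ is computed with $\SL_k(R)$. For $\er(R)\le\sr(R)$ I would show that $\E_k(R)$ acts transitively on the unimodular rows $\V_k(R)$ whenever $k>\sr(R)$. Given such a row $(a_1,\dots,a_k)$, the inequality $k-1\ge\sr(R)$ lets me shorten it by elementary column operations to a row whose first $k-1$ entries are unimodular; a further elementary combination turns the last entry into $1$, after which that unit clears every other entry, producing $(1,0,\dots,0)$. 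Hence all of $\V_k(R)$ lies in a single $\E_k(R)$-orbit, so $\er(R)\le\sr(R)$.

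For the bound $\sr(M)\le\mu(M)+\sr(R)-1$ of part $(ii)$, writing $\mu=\mu(M)$, I would first reduce to free modules: choosing a surjection $R^{\mu}\twoheadrightarrow M$ and applying the already-established $\sr(M/N)\le\sr(M)$ with $M=R^{\mu}$ gives $\sr(M)\le\sr(R^{\mu})$, so it suffices to prove $\sr(R^{\mu})\le\sr(R)+\mu-1$. I would prove this by induction on $\mu$, the case $\mu=1$ being trivial. For the step, set $n=\sr(R)+\mu-1$, decompose $R^{\mu}=R^{\mu-1}\oplus Re_{\mu}$, and take $(v_1,\dots,v_{n+1})\in\V_{n+1}(R^{\mu})$. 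Projecting onto $R^{\mu-1}$ and using the inductive bound $\sr(R^{\mu-1})\le n-1$, I shorten so that the first $n$ projections generate $R^{\mu-1}$; elementary operations then move $v_{n+1}$ into the summand $Re_{\mu}$. The obstruction to dropping $v_{n+1}$ now lives in the rank-one quotient $R^{\mu}/W\cong R/J$, where $W=\sum_{i\le n}Rv_i$ and $J=\{r\in R:re_{\mu}\in W\}$, and the hypothesis $(v_1,\dots,v_{n+1})\in\V_{n+1}(R^{\mu})$ translates into the unimodularity over $R$ of the length-$(n+1)$ row recording the last-coordinate data of $v_1,\dots,v_n,v_{n+1}$. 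Since $n\ge\sr(R)$, that row can be shortened, and realising the shortening by operations that add multiples of $v_{n+1}$ to the remaining $v_i$ forces $J=R$, i.e.\ $(v_1,\dots,v_n)\in\V_n(R^{\mu})$. The delicate point—and the one I expect to be the main obstacle—is exactly this last step: the row-shortening is an abstract statement about $R^{\mu}/W$, whereas the rank-one complement $Re_{\mu}$ is not canonically split off by $W$, so tracking the last-coordinate data through genuine elementary operations while preserving the generation of $R^{\mu-1}$ already achieved (equivalently, checking that the unimodular combination really yields $e_{\mu}\in W$) is where the bookkeeping concentrates; the rest is formal manipulation of the stable-range and transitivity criteria.
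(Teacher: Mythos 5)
The paper does not prove this proposition at all --- it is quoted verbatim from \cite[Proposition 11.3.11 and Lemma 11.4.6]{McCR87} --- so there is no internal argument to compare yours against; the only question is whether your blind proof is complete. Most of it is: the deduction of $\sr(M/N)\le\sr(M)$ from Proposition \ref{PropStableRankAndSurjectivity}, the Nakayama argument for the reverse inequality when $N\subseteq\Jac(M)$, the definitional inequalities $\mu(M)\le\glr(M)\le\er(M)$, and the classical shortening-and-clearing argument for $\er(R)\le\sr(R)$ are all correct. The reduction of $\sr(M)\le\mu(M)+\sr(R)-1$ to the free case $\sr(R^{\mu})\le\sr(R)+\mu-1$ via the already-proved quotient inequality is also fine.

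The gap is in the induction step for $\sr(R^{\mu})\le\sr(R)+\mu-1$, exactly where you place it, but it is a missing idea rather than bookkeeping. The stable-range condition only permits replacing $v_i$ by $v_i+r_iv_{n+1}$ for $i\le n$, with $v_{n+1}$ \emph{fixed}. Your two reductions are incompatible with this constraint in two ways. First, ``moving $v_{n+1}$ into $Re_{\mu}$'' replaces $v_{n+1}$ by $v_{n+1}'=v_{n+1}-\sum_j c_jv_j$; a shortening $(v_i+r_iv_{n+1}')_{i\le n}$ rewrites as $\bigl(\sum_j(\delta_{ij}-r_ic_j)v_j+r_iv_{n+1}\bigr)_{i\le n}$, and its span agrees with the span of some $(v_i+r_i''v_{n+1})_{i\le n}$ only when the rank-one perturbation $1_n-\rb^{\top}\cb$ is invertible, i.e.\ when $1-\sum_ic_ir_i$ is a unit --- which nothing guarantees. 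Second, the subsequent shortening of the last-coordinate row perturbs the projections $\pi v_i$ by multiples of $\pi v_{n+1}$ and can destroy the generation of $R^{\mu-1}$ achieved in the first step, so the submodule $W$ against which you defined $J$ changes under your own operations. That these interferences can be overcome is precisely the content of \cite[Lemma 11.4.6]{McCR87} (equivalently of Vaserstein's theorem on the stable rank of free modules/matrix rings); already for $\mu=2$ the honest computation shows that what must be shortened is not the row of last coordinates but the row of $2\times2$ minors $\left(m_{ij}\right)_{i<j}$ of the matrix $(v_1|\cdots|v_{n+1})$, the allowed modifications being $m_{ij}\mapsto m_{ij}+r_jm_{i,n+1}-r_im_{j,n+1}$ --- a stable-range statement about Pl\"ucker coordinates that does not follow formally from $\sr(R)\le s$ without a further argument. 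Either supply that argument (induction on $\mu$ with the minor calculus, or Vaserstein's original proof) or do as the paper does and cite the result.
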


\begin{proposition}  \label{PropERank}
Let $M$ be a finitely generated $R$-module. Then the following hold.
\begin{itemize}
\item[$(i)$] $\er(M) = \er_{R/I}(M)$ for every ideal $I \subseteq \ann(M)$. 
\item[$(ii)$] $\glr(M) = \glr_{R/I}(M)$ for every ideal $I \subseteq \ann(M) \cap \Jac(R)$. 
\item[$(iii)$] If $M \neq  \{0\}$, then $\er(M) \le \mu(M) + \sr(M) - 1$. 
\item[$(iv)$]  $\glr(M/N) = \glr(M)$ and $\er(M/N) = \er(M)$ for every $R$-submodule $N \subseteq \Jac(M)$.
\item[$(v)$] If $M \neq \{0\}$ and $\sr(R) = 1$ then $\er(M) = \sr(M) = \mu(M)$.
\end{itemize}
\end{proposition}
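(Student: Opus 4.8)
The five assertions split naturally into the \emph{functorial} statements (i), (ii), (iv), which compare orbit spaces of $M$ over different rings or over $M$ and a quotient, and the \emph{Bass stable range} statements (iii), (v), which bound $\er(M)$ by $\sr(M)$. For (i) and (ii) I would argue as follows. When $I \subseteq \ann(M)$ the module $M$ is naturally an $R/I$-module, a tuple generates $M$ over $R$ if and only if it generates $M$ over $R/I$, so $\V_n(M)$ and $\mu(M)$ are unchanged; moreover the right action of a matrix on $M^n$ depends only on its image in $\Mat_n(R/I)$, since $IM = 0$. It therefore suffices that the reduction map $\E_n(R) \to \E_n(R/I)$ be onto, which is clear because elementary generators lift, and then the $\E_n(R)$- and $\E_n(R/I)$-orbits on $\V_n(M)$ coincide, giving $\er_R(M) = \er_{R/I}(M)$. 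Part (ii) is identical with $\GL_n$ (equivalently $\SL_n$) in place of $\E_n$, the only new input being surjectivity of $\GL_n(R) \to \GL_n(R/I)$; this is exactly where $I \subseteq \Jac(R)$ is used, a lift of an invertible matrix over $R/I$ having determinant a unit modulo $\Jac(R)$, hence a unit.

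For (iv) let $N \subseteq \Jac(M)$. By Nakayama's lemma a tuple in $M^k$ generates $M$ as soon as its image generates $M/N$, so $\mu(M/N) = \mu(M)$ and the reduction $\V_k(M) \to \V_k(M/N)$ is surjective and intertwines the $\E_k(R)$- and $\GL_k(R)$-actions. Surjectivity at once gives $\er(M/N) \le \er(M)$ and $\glr(M/N) \le \glr(M)$. For the reverse inequalities one starts from $\mb, \mb' \in \V_k(M)$ whose images are equivalent, replaces $\mb$ by a suitable translate so that $\mb$ and $\mb'$ become congruent modulo $N^k$, and then invokes a congruence lemma: two generating vectors of $M$ congruent modulo $N \subseteq \Jac(M)$ are $\E_k(R)$- (resp.\ $\GL_k(R)$-) equivalent once $k \ge 2$. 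Proving this lemma, by expressing the correction as a product of elementary matrices with entries in a radical ideal (a relative-elementary Nakayama argument), is the heart of (iv).

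Items (iii) and (v) are the module counterpart of the Bass bound $\er(R) \le \sr(R)$ from Proposition \ref{PropRanks}(i). Write $\mu = \mu(M)$, $s = \sr(M)$, and fix a minimal generating vector $\mathbf{g} = (g_1, \dots, g_\mu) \in \V_\mu(M)$. For (iii) the plan is to show that every $\mb \in \V_k(M)$ with $k \ge \mu + s$ is $\E_k(R)$-equivalent to the normal form $(g_1, \dots, g_\mu, 0, \dots, 0)$: one applies the defining shortening property of $\sr(M)$ repeatedly (each shortening adds $R$-multiples of a trailing component to the others and so lies in $\E_k(R)$) to make an initial segment of the tuple generate $M$ and to clear the remaining components to zero, and then converts the generating initial segment into $\mathbf{g}$ using the zero components as workspace; the number of auxiliary zero components this conversion consumes is what produces the threshold $k \ge \mu + s$. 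For (v), assume $\sr(R) = 1$. Proposition \ref{PropRanks}(ii) gives $\sr(M) \le \mu + \sr(R) - 1 = \mu$, while $\sr(M) \ge \mu$ always holds, since $\V_{\mu-1}(M) = \emptyset \ne \V_{\mu}(M)$ shows that $\mu - 1$ is not in the (upward-closed) stable range of $M$; hence $\sr(M) = \mu$. As $\er(M) \ge \glr(M) \ge \mu$ by Proposition \ref{PropRanks}(ii), only $\er(M) \le \mu$ remains, that is $\E_k(R)$-transitivity on $\V_k(M)$ for every $k \ge \mu + 1$. Here $\sr(R) = 1$ yields the sharper fact that a single spare coordinate suffices to connect any two generating vectors of equal length, so the normal form is reached already at $k = \mu + 1$; this gives $\er(M) = \sr(M) = \mu$. (Alternatively, by (i) one first passes to $R/\ann(M)$, which again has stable rank $1$.)

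The recurring obstacle across (iii), (iv) and (v) is the same phenomenon: promoting the purely existential \emph{shortening} supplied by the stable rank into a genuine transitivity statement for the elementary group. Concretely this is the normal-form reduction behind (iii), its one-coordinate sharpening in (v), and the congruence lemma powering the hard half of (iv); by comparison the functorial items (i) and (ii) are routine.
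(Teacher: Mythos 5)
Your (i)--(iii) are essentially the paper's proof: (i) and (ii) rest on surjectivity of $\E_n(R)\to\E_n(R/I)$, resp.\ of $\GL_n(R)\to\GL_n(R/I)$ for $I\subseteq\Jac(R)$, and (iii) is the same shorten-then-swap argument the paper uses — reduce $\mb$ to $(m_1'',\dots,m_s'',0,\dots,0)$, insert a fixed $\mu$-element generating set into the $n-s\ge\mu$ free slots, clear the first $s$ entries and rearrange, which is exactly where the bound $\mu+s-1$ comes from.

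The gaps are in (iv) and (v). In (iv), the congruence lemma you call the heart of the argument is false as stated: over $R=\Z_{(p)}$ with $M=R^2$, so that $\Jac(M)=pR^2$, the generating pairs $(e_1,e_2)$ and $((1+p)e_1,e_2)$ are congruent modulo $\Jac(M)$ and $k=2\ge 2$, yet they are not $\E_2(R)$-equivalent — not even $\SL_2(R)$-equivalent, since the invariant $\det_{\mb}$ of Lemma \ref{LemDet} takes the values $1$ and $1+p$. The correct threshold is $k>\mu(M)$ (one spare coordinate), which is what the application provides since $n>\glr(M/N)\ge\mu(M/N)=\mu(M)$; and the paper only ever needs the lemma against the normal form $(m_1',\dots,m_\mu',0,\dots,0)$, whose trailing zeros supply the workspace: write $\mb\sim\mb'+\nb$ with $\nb\in N^n$, note by Nakayama that the first $\mu$ entries of $\mb'+\nb$ still generate $M$, clear the trailing entries, then remove the $\nu_i$ one at a time through a free slot. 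In (v), after correctly reducing to $\er(M)\le\mu$, you assert that ``$\sr(R)=1$ yields the sharper fact that a single spare coordinate suffices''; but that assertion \emph{is} statement (v) — your (iii) only gives $\er(M)\le 2\mu-1$ here. The missing argument is an induction on $\mu$: shorten $\mb\in\V_n(M)$, $n>\mu$, to $(m_1,\dots,m_\mu,m_1',0,\dots,0)$, apply the induction hypothesis to $M/Rm_1'$ (which needs at most $\mu-1$ generators) to match the first $\mu$ entries with $(m_1',\dots,m_\mu')$ up to errors in $Rm_1'$, and cancel those errors against the slot holding $m_1'$; the base case $\mu=1$ uses (i) to pass to $R/\ann(M)$ together with Proposition \ref{PropRanks}.
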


\begin{proof}
$(i)$. Observe that the natural map $\E_n(R) \rightarrow \E_n(R/I)$ is surjective for every ideal $I$ of $R$ and every $n > 0$. The result follows immediately.
$(ii)$. Observe that the natural map $\GL_n(R) \rightarrow \GL_n(R/I)$ is surjective for every ideal $I \subseteq \Jac(R)$ and every $n > 0$ \cite[Exercise I.1.12.iv]{Wei13}.

$(iii)$. We can assume, without loss of generality, that $\sr(M) < \infty$.  Let $n > \mu(M) + \sr(M) - 1$ and let $\mb \in \V_n(M)$, 
$\mb' = (m_1', \dots, m_{\mu}', 0, \dots, 0) \in \V_n(M)$ with $\mu \Doteq \mu(M)$. Let $s \Doteq \sr(M)$. It readily follows from the definition of $s$ that 
$\mb \sim_{\E_n(R)} (m_1'', \dots, m_s'', 0, \dots, 0)$ for some $(m_1'', \dots, m_s'') \in \V_s(M)$. 
The following equivalences are then immediate.

\begin{eqnarray*}
\mb  & \sim_{\E_n(R)} & (m_1'', \dots, m_s'', m_1', \dots, m_{\mu}', 0, \dots, 0),\\
        & \sim_{\E_n(R)}  & (0, \dots, 0, m_1', \dots, m_{\mu}', 0, \dots, 0),\\
        & \sim_{\E_n(R)}  & \mb'.
\end{eqnarray*}
This shows that $\er(M) < n$ and hence $\er(M) \le \mu + s - 1$.

$(iv)$. The inequality $\glr(M) \ge \glr(M/N)$ is trivial. To prove the reverse inequality, let $\mu \Doteq \mu(M) = \mu(M/N)$ and $n > \glr(M/N)$. 
Let $\mb \in \V_n(M)$ and $\mb' = (m_1', \dots, m_{\mu}', 0, \dots, 0) \in \V_n(M)$. 
By hypothesis, we can find $\nb = (\nu_1, \dots, \nu_n) \in N^n$ such that $\mb \sim_{\GL_n(R)} \mb' + \nb$. 
As $(m_1' + \nu_1, \dots, m_{\mu}' + \nu_{\mu})$ generates $M$, we easily derive that $\mb' + \nb \sim_{\GL_n(R)} \mb'$.
Thus we established that $\mb \sim_{\GL_n(R)} \mb'$ for every $\mb \in \V_n(R)$, which proves that $\glr(M) \le \glr(M/N)$. 
The proof of the analog result for the elementary rank is identical.

$(v)$. The equality $\sr(M) = \mu(M)$ is given by Proposition \ref{PropRanks}.$ii$.
We shall prove that $\er(M) = \mu(M)$ by induction on $\mu \Doteq \mu(M) > 0$. If $\mu = 1$, then $M \simeq R/ \ann(M)$. 
As $\er(M) = \er_{R/\ann(M)}(M)$ by $(i)$, we deduce from Proposition \ref{PropRanks} that $\er(M) \le \sr(R/\ann(M)) \le \sr(R)$. Therefore $\er(M) = 1$.
Assume now that $\mu > 1$ and let $n > \mu, \mb \in \V_n(M)$ and $\mb' = (m_1', \dots, m_{\mu}', 0, \dots, 0) \in \V_n(M)$. Since $n > \mu = \sr(M)$, 
we have $\mb \sim_{\E_n(R)} (m_1, \dots, m_{\mu}, m_1', 0, \dots, 0)$ for some $(m_1, \dots, m_{\mu}) \in \V_{\mu}(M)$. The induction hypothesis applied to $M/Rm_1'$ yields
$\mb \sim_{\E_n(R)} (m_1' + \lambda_1 m_1', \dots, m_{\mu}' + \lambda_{\mu} m_1', m_1', 0, \dots, 0)$ for some $\lambda_1, \dots, \lambda_{\mu} \in R$. 
Therefore $\mb\sim_{\E_n(R)} \mb'$.
\end{proof}

A ring $R$ is \emph{semi-local} if it has only finitely many maximal ideals. 
A ring $R$ is \emph{K-Hermite} if for every $n \ge 2$ and every $\rb \in R^n$, 
there is $\gamma \in \GL_n(R)$ such that $\rb \gamma = (0,\dots, 0, d)$ for some $d \in R$. 
It follows immediately from the definition that a K-Hermite ring is both a Bézout ring and a Hermite ring. 

\begin{proposition}  \label{PropStableRank}
 The following hold:
\begin{itemize}
\item[$(i)$] $\sr(R) = 1$ if $R$ is semi-local \cite[Corollary 10.5]{Bas64}.
\item[$(ii)$] $\sr(R) \le2$ if $R$ is a K-Hermite ring \cite[Proposition 8.$i$]{MeMo82}.
\item[$(iii)$] $\sr(R) \le2$ if every proper quotient of $R/\Jac(R)$ has stable rank $1$ \cite[Theorem 3.6]{McGov08}.
\item[$(iv)$] $\sr(R) \le \dim_{\text{Krull}}(R) + 1$ \cite[Corollary 2.3]{Heit84} or \cite[Theorem 2.4]{CLQ04}.
\end{itemize}
\end{proposition}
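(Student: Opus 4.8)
The plan is to treat the four items separately, since each is an instance of a by-now classical estimate for the Bass stable rank. Throughout I would use the unimodular-row reformulation extracted from the module definition, namely that $\sr(R) \le n$ holds precisely when every $(a_1, \dots, a_{n+1}) \in \V_{n+1}(R)$ admits a shortening $(a_1 + r_1 a_{n+1}, \dots, a_n + r_n a_{n+1}) \in \V_n(R)$ (the stable range being upward closed). I would also freely invoke the invariance $\sr(R) = \sr(R/\Jac(R))$ furnished by Proposition \ref{PropRanks}$(iii)$ applied to $M = R$ and $N = \Jac(R)$, noting that the Jacobson radical of $R$ as a module over itself is the ideal $\Jac(R)$.

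For $(i)$ I would argue by prime avoidance. Let $\im_1, \dots, \im_k$ be the finitely many maximal ideals and let $(a,b) \in \V_2(R)$; it suffices to produce $r$ with $a + rb \notin \im_i$ for all $i$, for then $a + rb \in R^{\times}$ and $\sr(R) = 1$. For each index with $b \in \im_i$ one has $a \notin \im_i$, whence $a + rb \equiv a \not\equiv 0$ modulo $\im_i$ for \emph{every} $r$; for the remaining indices, working in the residue field $R/\im_i$ the requirement $a + rb \not\equiv 0$ forbids a single value of $r$, which a field (having at least two elements) always permits one to avoid. Since distinct maximal ideals are comaximal, the Chinese Remainder Theorem then produces a single $r$ meeting all the residue constraints simultaneously.

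Item $(iii)$ I would reduce to the semiprimitive case: as $\sr(R) = \sr(R/\Jac(R))$, I may assume $\Jac(R) = 0$, so that every proper quotient of $R$ has stable rank $1$. Given $(a, b, c) \in \V_3(R)$, if $b \in R^{\times}$ then $(a, b) \in \V_2(R)$ already and we are done; otherwise $bR \neq R$, so $R/bR$ is a proper quotient with $\sr(R/bR) = 1$. The image of $(a, c)$ is unimodular in $R/bR$ because $(a,c) + (b) = R$, so stable rank $1$ supplies $\rho$ with $a + \rho c$ a unit modulo $bR$; lifting, $(a + \rho c, b) \in \V_2(R)$, which is the desired shortening of $(a,b,c)$ by its last coordinate (with $s = 0$). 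Hence $\sr(R) \le 2$, recovering \cite[Theorem 3.6]{McGov08}.

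For $(ii)$ the plan is to exploit the triangularization property: given $(a, b, c) \in \V_3(R)$, K-Hermiteness yields $\gamma \in \GL_2(R)$ with $(b,c)\gamma = (0, d)$, whence $dR = bR + cR$ and $(a, d) \in \V_2(R)$; one then manufactures the shortening of $(a,b,c)$ from a unimodular relation $pa + qd = 1$ together with the invertibility of $\gamma$, as carried out in \cite[Proposition 8]{MeMo82}. Finally, $(iv)$ is the Bass--Serre dimension bound in its sharp, non-Noetherian form; this is the deepest input, and I would simply cite Heitmann's estimate \cite[Corollary 2.3]{Heit84}, or equivalently the constructive derivation of \cite[Theorem 2.4]{CLQ04}, rather than reprove it. Accordingly, the only genuine obstacle if one insisted on a fully self-contained treatment is $(iv)$; the remaining items reduce to the elementary manipulations indicated above.
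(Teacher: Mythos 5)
The paper offers no proof of this proposition at all: each of the four items is stated with a citation to the literature ([Bas64], [MeMo82], [McGov08], [Heit84]/[CLQ04]) and the burden of proof is entirely outsourced. Your proposal therefore does strictly more than the paper. Your argument for $(i)$ (prime avoidance over the finitely many maximal ideals plus CRT) is correct and standard, and your reduction of $(iii)$ to the semiprimitive case via $\sr(R)=\sr(R/\Jac(R))$ followed by passage to $R/bR$ is essentially the same mechanism the paper itself uses later, in the proof of Lemma \ref{LemAlmostStableRankOne}, in the more general setting of a $\half$-generated ideal $I$ in place of $R$. For $(ii)$ you only triangularize $(b,c)$ and then defer the actual shortening to \cite[Proposition 8]{MeMo82}, so that item remains a citation in substance, as does $(iv)$ --- which is fine, since that is exactly what the paper does.

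There is one genuine, though easily repaired, gap in your proof of $(iii)$: the step ``otherwise $bR \neq R$, so $R/bR$ is a proper quotient'' fails when $b = 0$, since $R/0R = R$ is not a proper quotient and the hypothesis gives you nothing about $\sr(R)$ itself. The fix is one line --- if $b = 0$ then $(a,c) \in \V_2(R)$ already, and $(a + 0\cdot c,\, 0 + 1\cdot c) = (a,c)$ is a valid shortening of $(a,0,c)$ --- but as written the case is not covered. Note that the paper's own proof of Lemma \ref{LemAlmostStableRankOne} disposes of the analogous degenerate case ($a=0$) explicitly before invoking the stable-rank-one hypothesis on a quotient, which is the pattern you should follow here.
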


\subsection{Elementary divisor rings and coherent Prüfer rings} \label{SecEDRAndArithmetic}

In this section, we underscore the properties of the rings in $\mathcal{D}$ which 
are instrumental in the proofs of Theorems \ref{ThSL} and \ref{ThSK1}. We detail in particular the structure of finitely presented modules over these rings
by stating the corresponding invariant decomposition theorems. 
Complete definitions are provided and some remarkable examples and results are briefly outlined. 
Recall that the class $\mathcal{D}$ is the union of two classes: the elementary divisor rings and the almost local-global coherent Prüfer rings 
; the latter are the almost local-global semihereditary rings of Couchot \cite{Cou07}.

A ring $R$ is an \emph{elementary divisor ring} if every matrix over $R$ with finitely many rows and columns  
admits a Smith Normal Form, that is, for every $m \times n$ matrix $A$ over $R$, we can find 
$B \in \GL_m(R), C \in \GL_n(R)$ such that $BAC$ is a diagonal matrix whose diagonal coefficients $d_1, \dots, d_k$ 
satisfy the divisibility condition $d_{i + 1} \, \vert \, d_i$ for every $1 \le i \le k - 1$. 

The class of elementary divisor rings is easily seen to be stable under the formation of direct products and quotients. 
An elementary divisor ring is clearly a K-Hermite ring (see definition right before Proposition \ref{PropStableRank}), hence a Hermite ring. 
A finitely presented module over an elementary divisor ring decomposes into a direct sum of finitely many cyclic modules whose annihilator forms a descending chain.

\begin{theorem} \cite[Theorem 9.1]{Kap49}  \label{ThEDRDecomposition}
Let $R$ be an elementary divisor ring. Then $R$  satisfies the Invariant Factor Theorem, 
that is, every finitely presented $R$-module $M$ decomposes as 
 $$R/\ia_1 \times \cdots \times R/\ia_k \,(k \ge 0)$$ 
where the ideals  $\ia_i \subseteq R$ form a descending chain $R \supsetneq  \ia_1 \supseteq \cdots \supseteq \ia_k$. 
\end{theorem}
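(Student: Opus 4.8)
The plan is to deduce the decomposition directly from the defining Smith Normal Form property of an elementary divisor ring, applied to a presentation matrix of $M$. Since $M$ is finitely presented, I would first fix an exact sequence
$$R^m \xrightarrow[]{\varphi} R^n \rightarrow M \rightarrow 0,$$
and let $A$ be the $n \times m$ matrix of $\varphi$ relative to the canonical bases, so that $M \cong R^n/\varphi(R^m)$.

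Because $R$ is an elementary divisor ring, the matrix $A$ admits a Smith Normal Form: there are $B \in \GL_n(R)$ and $C \in \GL_m(R)$ such that $D \Doteq BAC$ is diagonal, with diagonal coefficients $d_1, \dots, d_k$ (where $k = \min(m,n)$) satisfying $d_{i+1} \mid d_i$ for $1 \le i \le k - 1$. Left-multiplication by $B$ merely changes the basis of the target $R^n$ and right-multiplication by $C$ the basis of the source $R^m$; neither operation alters the isomorphism type of the cokernel. Hence $M \cong R^n/\varphi(R^m) \cong R^n/D(R^m)$.

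Since $D$ is diagonal, its cokernel splits into cyclic summands:
$$R^n/D(R^m) \cong \bigoplus_{i = 1}^{n} R/(d_i),$$
where I set $d_i \Doteq 0$ for $k < i \le n$, the corresponding summands $R/(0) \cong R$ accounting for the free part of $M$ that occurs when $n > m$. The divisibility relations $d_{i+1} \mid d_i$ say exactly that $(d_i) \subseteq (d_{i+1})$ for $i < k$, while each zero entry contributes the least ideal $(0)$; thus the principal ideals $(d_1), \dots, (d_n)$ are totally ordered by inclusion.

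To finish, I would discard the summands for which $d_i$ is a unit, since these vanish, and reindex the surviving ideals in decreasing order as $\ia_1 \supseteq \ia_2 \supseteq \cdots \supseteq \ia_k$. Discarding the units guarantees $R \supsetneq \ia_1$, and the resulting isomorphism $M \cong R/\ia_1 \times \cdots \times R/\ia_k$ is the asserted decomposition. I do not expect a genuine mathematical obstacle here: the substance of the statement is the existence of a Smith Normal Form, which is built into the hypothesis, so the only care required is bookkeeping, namely matching the convention $d_{i+1} \mid d_i$ to a descending chain of ideals, suppressing the unit invariant factors (empty summands), and treating the vanishing invariant factors as the free part.
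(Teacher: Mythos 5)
Your argument is correct: reducing a presentation matrix to Smith Normal Form, passing to the cokernel of the resulting diagonal matrix, discarding the unit invariant factors and reversing the indexing (since the paper's convention $d_{i+1}\mid d_i$ gives an \emph{ascending} chain of ideals $(d_1)\subseteq\cdots\subseteq(d_k)$, with the zero entries sitting below everything) is exactly the standard derivation. The paper offers no proof of its own here --- it cites Kaplansky's Theorem 9.1 directly --- and your route is the expected one behind that citation, so there is nothing to fault beyond the bookkeeping you already flag.
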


Conversely, a ring whose finitely presented modules have a decomposition into a direct sum of cyclic modules is an elementary divisor rings \cite[Theorem 3.8]{LLS74}.

\begin{examples} \label{ExEDR}
Principal ideal rings, i.e., rings whose ideals are principal, are elementary divisor rings \cite[Theorem 12.3]{Kap49} .
The ring of entire functions and the ring of algebraic integers are two emblematic non-Noetherian elementary divisor domains \cite[Examples 1 and 2 of Section 4]{DB72}.
A Boolean ring and more generally a von Neumann ring $R$ \cite[Remark 12]{GH56} as well as  $R[X]$ \cite[Example 2]{Shores74} are elementary divisor rings.
\end{examples}

We turn now to the definition of an \emph{almost local-global coherent Prüfer ring}.

A ring is \emph{reduced} if it has no non-zero nilpotent element.
A ring is \emph{arithmetic} if its finitely generated ideals are locally principal. 
A \emph{Prüfer} ring in the sense of Hermida and S\`{a}nchez-Giralda \cite[Definition 4]{HS86} is a reduced arithmetical ring \cite[Proposition and definition VIII.4.4]{LQ15}. 
A ring is \emph{coherent} if its finitely generated ideals are finitely presented. 
A Prüfer coherent ring is characterized by the property that its finitely generated ideals are projective \cite[Theorem XII.4.1]{LQ15}. 
A Prüfer coherent ring is often called a \emph{semihereditary} ring, a terminology we shall discontinue (following Lombardi and Quitté \cite{LQ15}) as we find it not suggestive enough.

A ring $R$ is \emph{local-global}, or \emph{LG} for short, if every (possibly multivariate) polynomial over $R$ whose values generate $R$, 
represents a unit. 
The LG rings \cite{McDW81, EG82} are a natural generalization of the rings satisfying 
van der Kallen's primitive criterion \cite[Definition 1.10]{vdK77}, see \cite[Proposition on page 456]{McDW81}.
Let $f(X) = aX + b$ with $(a, b) \in \V_2(R)$. 
Clearly, the ideal generated by the values $f(R)$ of $f$ is $R$. If $R$ is LG, then there is $x \in R$ such that $ax + b$ is a unit. Therefore LG rings have stable rank $1$. 
A ring $R$ is \emph{almost local-global} in the sense of Couchot \cite{Cou07}, or \emph{almost-LG} for short, if $R/Rr$ is LG for every regular element $r \in R$. 

\begin{examples} \label{ExLocalGlobal}
Semi-local rings and rings of Krull dimension $0$ (e.g. von Neumann regular rings) are LG rings \cite[Examples 4.1 and 4.2 of Chapter V]{FS01}, \cite[Fact IX.6.2]{LQ15}. 
The Nagata ring $R(X)$ of a ring $R$ is an LG ring \cite[Fact IX.6.7]{LQ15}.
\end{examples}

A \emph{Dedekind domain} is a Noetherian Prüfer domain. 
Dedekind domains are one-dimensional \cite[Theorem 11.6]{Mat89}, hence almost-LG. 
The classical structure theorem of Steinitz for finitely generated modules over Dedekind rings \cite[Theorem 7.48]{Mag02} generalizes to 
almost-LG coherent Prüfer rings in the following way.

\begin{theorem} \cite[Theorem 2]{Cou07} \label{ThInvariantDecomposition}
Let $R$ be an almost-LG coherent Prüfer ring. Then $R$ satisfies the Invariant Factor Theorem, that is, every finitely presented $R$-module $M$ decomposes as
$$M \simeq \prod_{i = 1}^m R/I_i \times 
\prod_{j = 1}^n \left(J_{j, 1}e_j \times \cdots \times J_{j, k_j}e_j \right).$$

where $R \supsetneq I_1 \supseteq \cdots \supseteq I_m \neq \{0\}$ are invertible ideals of $R$, $(e_1, \dots, e_n)$ is a sequence of orthogonal idempotents, $(k_1, . . . , k_n)$ 
a strictly increasing sequence of positive integers and each ideal $J_{j, k}e_j$ is an invertible ideal of $Re_j$. 
In addition, the isomorphism class of $M$ is completely determined by the following invariants:
\begin{itemize}
\item the ideals $I_1, \dots, I_m$,
\item the idempotents $e_1, \dots, e_n$, 
\item the integers $k_1, \dots, k_n$ and
\item the isomorphism class of $(J_{1, 1} \cdots J_{1, k_1}e_1) \times \cdots \times (J_{n, 1} \cdots J_{n, k_n}e_n)$.
\end{itemize}
\end{theorem}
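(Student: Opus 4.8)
The plan is to establish the decomposition locally, where the ideal lattice is a chain and a Smith normal form is available, to glue the local pieces by means of the almost-LG hypothesis, and finally to recover the invariants from the Fitting ideals. First I would analyse the local structure of $R$. Localising a reduced arithmetical ring at a prime $\ip$ yields a local ring $\Rp$ whose finitely generated ideals are principal and totally ordered by inclusion; together with reducedness this forces $\Rp$ to be a valuation domain (if $ab = 0$ with $(a) \subseteq (b)$, writing $a = bt$ gives $a^2 = (ab)t = 0$, whence $a = 0$). Over a valuation domain the presentation matrix of a finitely presented module admits a Smith normal form, so $\Mp$ splits as a finite direct sum of cyclic modules $\Rp/(d_1) \oplus \cdots \oplus \Rp/(d_r)$ with $d_{i+1} \mid d_i$. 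Because the Fitting ideals commute with localisation, these local invariants are governed by the localisations $\Fitt_i(M)_{\ip}$ and will reassemble into global data.

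Second I would separate $M$ into a projective summand and a torsion summand. Since $R$ is coherent, $\tau(M)$ is finitely generated and $M/\tau(M)$ is finitely presented; since $R$ is a coherent Prüfer ring, $M/\tau(M)$ is torsion-free, hence flat, and being finitely presented it is projective, so the sequence $0 \to \tau(M) \to M \to M/\tau(M) \to 0$ splits and $M \simeq \tau(M) \oplus P$ with $P$ projective. The rank function $\ip \mapsto \dim_{\kappa(\ip)} P \otimes \kappa(\ip)$ is locally constant on $\operatorname{Spec} R$; the induced clopen partition produces the orthogonal idempotents $e_1, \dots, e_n$, with $Pe_j$ of constant rank $k_j$ over $Re_j$, and after reindexing the $k_j$ may be taken strictly increasing. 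A Steinitz-type argument over each semihereditary ring $Re_j$ then writes $Pe_j \simeq J_{j,1}e_j \times \cdots \times J_{j,k_j}e_j$, the building blocks being invertible (rank-one projective) ideals; here one uses that a finitely generated regular ideal of a Prüfer ring is invertible.

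Third, and this is where the hypothesis does the real work, I would globalise the torsion part. As $\ann(\tau(M))$ contains a regular element $r$, the module $\tau(M)$ is finitely presented over $\ovR = R/Rr$, which is LG by the almost-LG assumption. Over an LG ring the primitive criterion lets one lift and patch the local cyclic decompositions of the first step into a single global decomposition $\prod_{i=1}^m R/I_i$; the local divisibility relations $d_{i+1} \mid d_i$ assemble into the descending chain $R \supsetneq I_1 \supseteq \cdots \supseteq I_m \neq \{0\}$, and each $I_i$ is finitely generated by coherence and regular, hence invertible.

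Finally I would prove uniqueness by exhibiting the listed data as invariants of $M$. The idempotents $e_j$ and the integers $k_j$ are read off from the locally constant rank function of $P$; the chain $I_1 \supseteq \cdots \supseteq I_m$ is recovered from the intrinsic Fitting ideals $\Fitt_i(M)$ by taking successive quotients of consecutive terms (inverting the invertible ideals involved); and the remaining datum $J_{j,1}\cdots J_{j,k_j}e_j$ is precisely $\det(Pe_j)$, the Steinitz class of $Pe_j$, which is the only invariant beyond the rank distinguishing finitely generated projective modules over $Re_j$. The principal obstacle is the gluing of the third step: locally every $\Rp$ is a valuation domain and the decomposition into cyclics is automatic, but reassembling these local summands into genuine global invertible-ideal summands---without a global chain of ideals and while tracking the idempotents forced by the zero divisors of $R$---is exactly what the almost-LG hypothesis is designed to control and forms the technical heart of the proof.
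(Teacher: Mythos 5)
The paper does not prove this statement at all: it is imported verbatim as \cite[Theorem 2]{Cou07}, so there is no internal proof to compare against. Your strategy --- localize to valuation domains, split $M$ into a projective summand and a bounded torsion summand, decompose the torsion part over the local-global quotient $R/Rr$, apply a Steinitz argument to the projective part, and read the invariants off the Fitting ideals and the rank function --- is the standard route and, as far as one can reconstruct it, is essentially Couchot's. The uniqueness discussion (idempotents and ranks from the locally constant rank function, the chain $I_1 \supseteq \cdots \supseteq I_m$ from successive quotients of Fitting ideals, the Steinitz class as $\bigwedge^{k_j}(Pe_j)$) is also the right bookkeeping.

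There are, however, two places where the sketch asserts rather than proves something genuinely nontrivial. First, the splitting $M \simeq \tau(M) \oplus P$: over a coherent Prüfer ring with zero divisors, ``finitely presented torsion-free $\Rightarrow$ flat $\Rightarrow$ projective'' is not formal; it rests on Endo's theorem that the total quotient ring $K(R)$ of a semihereditary ring is von Neumann regular, and the finite generation of $\tau(M)$ (hence the existence of a single regular $r$ annihilating it) needs an argument beyond the word ``coherent''. Second, and more seriously, the heart of the proof is the claim that the finitely presented torsion module, viewed over the arithmetical local-global ring $\ovR = R/Rr$, decomposes into cyclics $\ovR/\overline{I}_1 \times \cdots \times \ovR/\overline{I}_m$ with a descending chain of annihilators. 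Saying that ``the primitive criterion lets one lift and patch the local cyclic decompositions'' is not a proof: patching a Smith normal form across all localizations simultaneously is exactly the content of the theorem that arithmetical local-global rings satisfy the invariant factor theorem (equivalently, behave like elementary divisor rings on finitely presented modules), and this requires a genuine argument --- typically one shows that the polynomial whose values detect a ``good'' first invariant factor is primitive and invokes local-globalness to represent a unit. You correctly identify this as the technical core, but the proposal leaves it as a black box, so the proof does not close as written.
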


\begin{remark}
Theorem \ref{ThInvariantDecomposition} implies in particular that $J_{j, 1}e_j \times \cdots \times J_{j, k_j}e_j \simeq 
(J_{1, 1} \cdots J_{1, k_j} e_j) \times (Re_j)^{k_j - 1}$ for every $1 \le j \le n$.
\end{remark}

A ring $R$ is said to be \emph{of finite character} if $R/I$ is semi-local for every non-zero ideal $I$ of $R$. 
Using Examples \ref{ExLocalGlobal}, it is immediate to 
check that a Prüfer domain which is one-dimensional or of finite character is an almost-LG coherent Prüfer ring but not necessarily Noetherian.

\begin{examples} \label{ExSemihereditary}
The ring of algebraic integers is a non-Noetherian Bézout domain (see Examples \ref{ExEDR}) which is one-dimensional 
by the going-up and going-down theorems \cite[Theorem 9.4]{Mat89}.
Let $p$ be a prime number, let $\Z_p$ be the ring of the $p$-adic integers and let $\Q_p$ be its fraction field.
Then the integral closure of $\Z_p$ in an algebraic closure of $\Q_p$ is a one-dimensional valuation domain,
 with value group $\Q$, hence not Noetherian \cite[Theorem 11.1]{Mat89}.
\end{examples}

We conclude this section with two properties of importance for the proofs of our theorems.

\begin{proposition} \label{PropInstrumental}
Let $R$ be a ring in $\mathcal{D}$. 
Then the two following hold.
\begin{itemize}
\item[$(i)$] Every finitely generated ideal of $R$ can be generated by two elements.
\item[$(ii)$] The stable rank of $R$ is at most $2$.
\end{itemize}
\end{proposition}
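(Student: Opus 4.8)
The plan is to treat separately the two classes whose union is $\mathcal{D}$, beginning with the easy class. If $R$ is an elementary divisor ring, then as recorded in Section \ref{SecEDRAndArithmetic} it is a K-Hermite ring, hence a Bézout ring, so every finitely generated ideal is in fact principal; this settles $(i)$ with one generator to spare. For $(ii)$, being K-Hermite, $R$ satisfies $\sr(R) \le 2$ by Proposition \ref{PropStableRank}.$(ii)$. Thus both assertions hold for the elementary divisor rings with no extra work, and the substance of the proposition lies entirely in the almost-LG coherent Prüfer case.

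For $(i)$ in that case I would proceed as follows. Since $R$ is coherent, any finitely generated ideal $I$ is finitely presented; since $R$ is Prüfer coherent, $I$ is projective \cite[Theorem XII.4.1]{LQ15}, and since $R$ is arithmetic, $I$ is locally principal. A finitely generated projective module has clopen support, so there is an idempotent $e \in R$ with $(1-e)I = 0$ such that $I$, viewed as an ideal of the direct factor $eR$, is invertible and in particular regular; this direct factor is again an almost-LG coherent Prüfer ring. This reduces matters to showing that a \emph{regular} finitely generated ideal of such a ring is two-generated. Choosing a regular element $a \in I$, the almost-LG hypothesis makes $\ovR \Doteq R/Ra$ an LG ring, and $\ovR$ is coherent (a quotient of the coherent ring $R$ by a principal ideal), so $\ovR$-module $\ovI \Doteq I/Ra$ is finitely presented; a local computation using the local principality of $I$ shows that $\ovI$ is locally cyclic. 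The local--global principle for the number of generators valid over LG rings then produces a single $\ov{b}$ generating $\ovI$, and lifting $\ov b$ to $b \in I$ yields $I = Ra + Rb$. As $a,b$ already lie in $eR$, this exhibits $I$ as a two-generated ideal of $R$ as well.

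For $(ii)$ in the Prüfer case I would again route the almost-LG hypothesis through quotients by regular elements, each of which is LG and therefore of stable rank $1$. The cleanest target is the sufficient condition of Proposition \ref{PropStableRank}.$(iii)$: a proper quotient $R/I$ of $R/\Jac(R)$ with $I$ regular is a quotient of the LG ring $R/Ra$ for a regular $a \in I$, and LG passes to ring quotients, so $R/I$ is LG of stable rank $1$; the zero-dimensional quotients are of stable rank $1$ by Proposition \ref{PropStableRank}.$(iv)$. Alternatively, and more safely, the bound $\sr(R) \le 2$ for these rings can be extracted directly from Couchot's analysis \cite{Cou07}, which is the source of Theorem \ref{ThInvariantDecomposition} and hence already underlies the structural input used here.

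The hard part, in both $(i)$ and $(ii)$, is that the almost-LG hypothesis only controls the quotients $R/Rr$ by \emph{regular} $r$, whereas a finitely generated ideal or a unimodular triple may be built from zero-divisors. The decisive move is therefore the reduction to the regular (equivalently invertible) situation — carried out in $(i)$ through the idempotent cut out by the support of the projective ideal, and the analogous difficulty in $(ii)$ being the control of zero-divisor coordinates, which one resolves using the reduced coherent structure of $R$. Once one is in the regular case, the LG quotients supply both the single generator needed for two-generation and the stable rank $1$ needed to shorten unimodular vectors, so the whole weight of the proof rests on this reduction together with the local--global principle for generators over LG rings.
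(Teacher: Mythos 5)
Your handling of the elementary divisor case coincides with the paper's (Bézout gives $(i)$, K-Hermite plus Proposition \ref{PropStableRank}.$(ii)$ gives $(ii)$), and your argument for $(i)$ in the almost-LG coherent Prüfer case is essentially a correct unpacking of what the paper simply cites from Couchot: reduce to an invertible ideal of $Re$ via the support idempotent of the projective ideal, then obtain $\half$-generation through the LG quotient $R/Ra$. The genuine gap is in assertion $(ii)$ for the Prüfer case. Your ``cleanest target,'' Proposition \ref{PropStableRank}.$(iii)$, requires \emph{every} proper quotient of $R/\Jac(R)$ to have stable rank $1$, whereas the almost-LG hypothesis only controls quotients by regular ideals, and your patch for the rest (``the zero-dimensional quotients'') does not cover them: a quotient by a non-regular ideal need not be zero-dimensional. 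Concretely, $R = \Z \times \Z$ is an almost-LG coherent Prüfer ring (a regular element $(a,b)$ has $a, b \neq 0$, so $R/R(a,b) \simeq \Z/a\Z \times \Z/b\Z$ is semi-local, hence LG), $\Jac(R) = 0$, and the proper quotient $R/(\Z \times 0) \simeq \Z$ has stable rank $2$. So the hypothesis of Proposition \ref{PropStableRank}.$(iii)$ fails outright for such rings even though the conclusion $\sr(R) \le 2$ happens to hold; the route cannot be repaired. Your fallback --- ``extract the bound from Couchot's analysis'' --- is a citation rather than an argument, and it is not in fact what the paper does.

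The missing step is the one the paper supplies through Proposition \ref{PropTransitivityPP}, applied with $I = R$ (its hypothesis $(i)$ being exactly the almost-LG property, since LG rings have stable rank $1$). Given $(a,b,c) \in \V_3(R)$: if $a$ is regular, one concludes from $\sr(R/Ra) = 1$; if $a$ is a zero divisor, the PP-ring property of a coherent Prüfer ring yields an idempotent $e$ with $(0:a) = Re$, and one splits $R \simeq Re \times R(1-e)$ and runs the regular-element argument separately on each factor (on $Re$ one may replace the vanishing coordinate by a regular element of $Re$, and on $R(1-e)$ the image of $a$ is regular). This idempotent splitting at the level of a single unimodular triple --- not at the level of quotient rings as in Proposition \ref{PropStableRank}.$(iii)$ --- is what actually controls the zero-divisor coordinates. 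You correctly flag this as ``the hard part'' and gesture at ``the reduced coherent structure of $R$,'' but you do not carry it out, and the concrete mechanism you do propose would fail.
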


\begin{proof}
Assertion $(i)$ is obvious for a Bézout ring, hence for any elementary divisor ring.
If $R$ is an almost-LG coherent Prüfer ring, then $(i)$ follows from \cite[Theorem 2 and Lemma 10]{Cou07}. 
Assertion $(ii)$ for an elementary divisor ring is given by Proposition \ref{PropStableRank}.$ii$.
For an almost-LG coherent Prüfer ring, combine Proposition \ref{PropTransitivityPP} below and \cite[Lemma 10]{Cou07}.
\end{proof}

\section{The quotient $\V_n(I)/\SL_n(R)$ for $n \ge 2$ } \label{SecI}

In this section, $I$ denotes a two-generated ideal of $R$. 
Our results assume that $I$ enjoys at least one of the following two properties.

\begin{definition}
Let $I$ be an ideal of $R$ and let $I^{-1} \Doteq \{x \in K(R) \, | \, xI \subseteq R\}$ where $K(R)$ is the total ring of quotients of $R$.
\begin{itemize}
\item The ideal $I$ is said to be \emph{invertible} if $II^{-1} = R$.
\item The ideal $I$ is said to be \emph{$\half$-generated} if $I/Rx$ is generated by one element for every $x \in R \setminus \{ 0 \}$.
\end{itemize}
\end{definition}

It is well-known that an invertible ideal contains a regular element and that it is projective of constant rank $1$ 
(see, e.g., \cite[Theorem 11.6]{Eis95}, where this claim is proved in 11.6.c and 11.6.d without actually assuming that $R$ is Noetherian). 
A $\half$-generated ideal which contains a regular element is invertible \cite[Theorem 1]{LM88}. 
In a one-dimensional domain \cite[Theorem 3.1]{Heit76} or in a ring of finite character \cite[Theorem 3]{GH70}, invertible ideals are  $\half$-generated.

\begin{lemma} \label{LemCyclicModule}
Let $I$ and $\ia$ be ideals of $R$ with $I$ invertible. If $I/I \ia$ is cyclic 
then $I/ I \ia \simeq R/ \ia$.
\end{lemma}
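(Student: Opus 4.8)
The plan is to produce an explicit isomorphism rather than argue abstractly. Since $I/I\ia$ is cyclic, I first choose $x \in I$ whose image generates it, so that $I = Rx + I\ia$. I would then consider the $R$-linear map $\psi \colon R \to I/I\ia$ sending $r$ to $rx + I\ia$. By the choice of $x$ this map is surjective, and its kernel $K = \{ r \in R \mid rx \in I\ia \}$ visibly contains $\ia$, because $x \in I$ forces $\ia x \subseteq \ia I = I\ia$. The whole content of the lemma is thus the reverse inclusion $K \subseteq \ia$; once this is established, the first isomorphism theorem gives $I/I\ia \simeq R/K = R/\ia$, as desired.

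The key step, and the only place where invertibility (as opposed to mere finite generation) is used, is a fractional-ideal manipulation inside $K(R)$. Multiplying the identity $I = Rx + I\ia$ by $I^{-1}$ and using $II^{-1} = R$ yields $R = xI^{-1} + \ia$, where $xI^{-1}$ is an ordinary (integral) ideal of $R$ since $x \in I$ gives $xI^{-1} \subseteq II^{-1} = R$. Now for $r \in K$ I have $rx \in I\ia$; multiplying this membership by $I^{-1}$ and again invoking $II^{-1} = R$ produces $r\,(xI^{-1}) = rxI^{-1} \subseteq I\ia I^{-1} = (II^{-1})\ia = \ia$. Finally, combining $r\,(xI^{-1}) \subseteq \ia$ with the coprimality $xI^{-1} + \ia = R$ — that is, writing $1 = j + a$ with $j \in xI^{-1}$ and $a \in \ia$, then multiplying by $r$ to get $r = rj + ra$ with $rj \in \ia$ and $ra \in \ia$ — forces $r \in \ia$. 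This yields $K \subseteq \ia$ and completes the argument.

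The hard part is precisely pinning down the kernel $K$: the inclusion $\ia \subseteq K$ is formal, whereas $K \subseteq \ia$ genuinely needs $I^{-1}$ to ``cancel'' $I$, which is unavailable for a merely finitely generated ideal. An alternative, more conceptual route would exploit flatness of the projective module $I$ to identify $I/I\ia \simeq I \otimes_R R/\ia$ as a rank-one projective $R/\ia$-module, and then use that a cyclic rank-one projective module over any commutative ring is free of rank one. I would favor the elementary computation above and keep the tensor-product viewpoint only as a conceptual cross-check.
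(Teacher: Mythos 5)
Your proof is correct and follows essentially the same route as the paper's: both identify $I/I\ia$ with $R/K$ for a suitable ideal $K$ (the annihilator of the cyclic module, which in your version is the kernel $(I\ia : x)$ of $r \mapsto rx + I\ia$), and both obtain the crucial inclusion $K \subseteq \ia$ by multiplying by $I^{-1}$ and using $II^{-1} = R$. The only difference is cosmetic: the paper works with the colon ideal $(I\ia : I)$ directly, so that $rI \subseteq I\ia$ gives $r \in rII^{-1} \subseteq I\ia I^{-1} = \ia$ in one step, whereas your choice of a generator $x$ requires the extra comaximality observation $xI^{-1} + \ia = R$.
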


\begin{proof}
Since $I/I \ia$ is cyclic, it suffices to show that $\ann(I/I \ia)  = \ia$. Clearly $\ann(I/I \ia)  = (I \ia : I)$ and the 
inclusions $(I \ia : I)  \supseteq \ia$ and $(I \ia : I) \subseteq (II^{-1} \ia: II^{-1})$ are immediate. 
To conclude, we observe that $(II^{-1} \ia: II^{-1}) = \ia$ holds because $I$ is invertible.
\end{proof}

\begin{lemma} \label{LemAlmostStableRankOne}
Let $I$ be a $\half$-generated ideal of $R$. Assume that at least one of the following holds.
\begin{itemize}
\item[$(i)$] Every proper quotient of $R$ has stable rank $1$.
\item[$(ii)$] Every proper quotient of $R/\Jac(R)$ has stable rank $1$ and $I$ is invertible.
\end{itemize}
Then $\sr(I) \le 2$.
In particular $\sr(R) \le 2$ and $R$ is Hermite.
\end{lemma}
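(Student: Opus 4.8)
The plan is to establish the single substantive claim $\sr(I)\le 2$, after which the two ``in particular'' statements drop out formally. By the definition of the stable range it is enough to fix $(a,b,c)\in\V_3(I)$ and exhibit $r_1,r_2\in R$ with $(a+r_1c,\,b+r_2c)\in\V_2(I)$; I may assume $I\neq\{0\}$.

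The engine of the proof is the following shortening step, used identically under both hypotheses. \emph{Claim:} if $x\in I\setminus\{0\}$ satisfies $\sr\!\big(R/(Rx:I)\big)\le 1$ and $y\in I$ satisfies $Ry+Rx+Rc=I$, then there is $r\in R$ with $R(y+rc)+Rx=I$. To see this, note that $x\neq 0$ and the $\half$-generation of $I$ force $I/Rx$ to be cyclic, so $I/Rx\cong R/A$ with $A=\ann(I/Rx)=(Rx:I)$; if $A=R$ then $x$ already generates $I$ and we are done, so assume $A\neq R$. The images of $y$ and $c$ generate $I/Rx\cong R/A$, hence form a unimodular pair $(\alpha,\gamma)$ over $R/A$. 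Since $\sr(R/A)\le 1$, I can pick $r$ so that $\alpha+r\gamma$ is a unit of $R/A$; then $y+rc$ generates $I/Rx$, i.e.\ $R(y+rc)+Rx=I$. I will apply the claim with $(y,x)$ equal to $(a,b)$, to $(b,a)$, or to $(a,b+c)$, which produces respectively the pairs $(a+r_1c,b)$, $(a,b+r_2c)$, $(a+r_1c,b+c)$ in $\V_2(I)$; in each case $Ry+Rx+Rc=I$ holds automatically.

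It remains to exhibit a retained element $x\in\{a,b,b+c\}$ meeting the two hypotheses of the claim. Under $(i)$ the relevant condition is simply $x\neq 0$: since $a,b,c$ generate $I\neq\{0\}$ not all are zero, so one of $a$, $b$, or (if $a=b=0$) $b+c=c$ is nonzero, and then $(Rx:I)\supseteq Rx\neq\{0\}$ makes $R/(Rx:I)$ a proper quotient of $R$, hence of stable rank $1$. Under $(ii)$ I exploit that $I$ is invertible, so finitely generated and locally principal. Let $\Lambda=\{z\in I:\ R_{\ip}z\neq I_{\ip}\ \text{for every maximal ideal }\ip\}$; this is an $R$-submodule of $I$ (an intersection of kernels $I\to I_{\ip}/\ip I_{\ip}$), and it is proper because for any maximal $\ip_0$ the surjection $I\twoheadrightarrow I_{\ip_0}/\ip_0 I_{\ip_0}\cong R/\ip_0$ is nonzero. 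As $a,b,c$ generate $I$, one of them lies outside $\Lambda$, and then (arguing as above, using that $\Lambda$ is a subgroup) a choice $x\in\{a,b,b+c\}$ also lies outside $\Lambda$. For such an $x$ there is a maximal ideal $\ip$ with $R_{\ip}x=I_{\ip}$, whence $(Rx:I)_{\ip}=R_{\ip}$ and so $J:=(Rx:I)\not\subseteq\ip$, in particular $J\not\subseteq\Jac(R)$.

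The one real obstacle is to upgrade $J\not\subseteq\Jac(R)$ to $\sr(R/J)\le 1$ under hypothesis $(ii)$, where only proper quotients of $R/\Jac(R)$ are controlled. This is where Proposition \ref{PropRanks} does the work: $R/(J+\Jac(R))$ is a proper quotient of $R/\Jac(R)$ (as $J\not\subseteq\Jac(R)$) and hence has stable rank $1$; since stable rank is unchanged modulo the Jacobson radical and non-increasing under quotients, $\sr(R/J)=\sr\!\big((R/J)/\Jac(R/J)\big)\le\sr\!\big(R/(J+\Jac(R))\big)=1$. This completes $\sr(I)\le 2$. Finally, applying the result to $I=R$, which is $\half$-generated and invertible, gives $\sr(R)\le 2$; then $\glr(R)\le\er(R)\le\sr(R)\le 2$ by Proposition \ref{PropRanks}, and since $\SL_2(R)$ acts transitively on $\V_2(R)$ this forces $\glr(R)=1$, i.e.\ $R$ is Hermite.
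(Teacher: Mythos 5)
Your argument is correct, and it reaches the conclusion by a route that genuinely differs from the paper's in case $(ii)$ and streamlines case $(i)$. Under hypothesis $(i)$ the paper follows McGovern: it forms the intersection $J$ of the maximal elements of the poset of ideals strictly between $Ra$ and $I$, applies the stable-rank-one hypothesis to the cyclic module $I/J$, and then needs a separate maximality argument to pass from $R(b+rc)+J=I$ to $Ra+R(b+rc)=I$. You instead work directly modulo $Rx$, using that $(Rx:I)\supseteq Rx\neq\{0\}$ (because $I\subseteq R$) to see that $I/Rx\simeq R/(Rx:I)$ is already a proper quotient; this eliminates both the detour through $J$ and the final maximality step. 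Under hypothesis $(ii)$ the paper reduces to case $(i)$ over $R/\Jac(R)$ via the isomorphism $I/I\Jac(R)\simeq R/\Jac(R)$ of Lemma \ref{LemCyclicModule} together with $\sr(M)=\sr(M/N)$ for $N\subseteq\Jac(M)$ (Proposition \ref{PropRanks}), whereas you exploit invertibility through local principality: you locate $x\in\{a,b,b+c\}$ generating $I_{\ip}$ at some maximal ideal $\ip$, deduce $(Rx:I)\not\subseteq\Jac(R)$, and then bound $\sr(R/(Rx:I))$ by passing to the proper quotient $R/((Rx:I)+\Jac(R))$ of $R/\Jac(R)$. Both arguments use invertibility in an essential way, but differently; the paper's reduction is shorter once Lemma \ref{LemCyclicModule} is in hand, while yours treats the two hypotheses through a single uniform shortening claim and makes explicit which colon ideal escapes the Jacobson radical. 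Your derivation of the ``in particular'' assertions (apply the lemma to $I=R$, then use Proposition \ref{PropRanks} and the transitivity of $\SL_2(R)$ on $\V_2(R)$) is the intended one.
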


\begin{proof}
Let $(a, b, c) \in \V_3(I)$. We need to prove the existence of $(r_1, r_2) \in R^2$ such that $(a + r_1 c, b + r_2 c) \in \V_2(I)$. 

We assume first that $(i)$ holds. The proof follows closely the lines of \cite[Theorem 3.6]{McGov08}. 
If $a = 0$, then we can take $(r_1, r_2) = (1, 0)$. Thus we can suppose that $a \neq 0$. 
If $I = Ra$, then we can take $(r_1, r_2) = (0, 0)$. Thus we can suppose that $Ra \neq I$. 
As $I$ is $\half$-generated, the non-empty set $X$ of the ideals $I'$ verifying $Ra \subseteq I' \subsetneq I$ is isomorphic to the set of proper ideals of 
$R$ containing $(a : I)$ as a set partially ordered by inclusion. Hence we can consider the intersection $J$ of the maximal  elements of $X$. 
Since $J$ contains $a$, it is a non-zero ideal. Thus $\sr(I/J) = 1$, so that we can find $r \in R$ satisfying $b + rc + J = I$. 
Reasoning by contradiction, we assume that $Ra + R(b + rc) \subsetneq I$. Then $Ra + R(b + rc)$ is contained in a 
maximal element $K \in X$. As a result, we have $J \subseteq K \subsetneq I$.  We obtain a contradiction by observing that $K \supseteq Ra + R(b + rc) + J = I$. 
Therefore we can take $(r_1, r_2) = (0, r)$.

Let us assume now that $(ii)$ holds. 
If $I\Jac(R) = \{0\}$, then $\Jac(R) = \{0\}$ so that $(i)$ is satisfied. 
Otherwise, we have $I/\Jac(R)I \simeq R/\Jac(R)$ by Lemma \ref{LemCyclicModule}. Hence $\sr(I/\Jac(R)I) \le 2$ by $(i)$.
The result follows then from Proposition \ref{PropRanks}.$iii$.
\end{proof}

\begin{proposition} \label{PropOneAndAHalf}
Let $R$ and $I$ be as in Lemma \ref{LemAlmostStableRankOne}. Then $\er(I) \le 2$.
\end{proposition}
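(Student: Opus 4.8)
The plan is to prove the equivalent assertion that $\E_n(R)$ acts transitively on $\V_n(I)$ for every $n \ge 3$. By Lemma~\ref{LemAlmostStableRankOne} we already know $\sr(I) \le 2$ and that $R$ is Hermite, and the case $\mu(I) \le 1$ is dispatched at once by Proposition~\ref{PropERank}.$iii$, which yields $\er(I) \le \mu(I) + \sr(I) - 1 \le 2$. So I would assume throughout that $\mu(I) = 2$; note that then both coordinates of any element of $\V_2(I)$ are non-zero, since no single element generates $I$. First I would reduce to the critical length $n = 3$. Since $2$ lies in the stable range of $I$, any $\mb \in \V_n(I)$ with $n \ge 3$ can be brought, by adding suitable multiples of the last coordinate to the first two (an elementary operation), to a form whose first two coordinates already generate $I$; the last coordinate then lies in their span and is cleared. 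Iterating, $\mb \sim_{\E_n(R)} (a, b, 0, \dots, 0)$ with $(a, b) \in \V_2(I)$. Embedding $\E_3(R)$ in the first three coordinates, transitivity at length $n$ follows from transitivity at length $3$, and the latter reduces, by the same shortening, to showing that all triples $(a, b, 0)$ with $(a, b) \in \V_2(I)$ lie in a single $\E_3(R)$-orbit.

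The engine is the observation that, starting from $(a, b, 0)$, the third coordinate can be replaced by any $c \in I$ (because $Ra + Rb = I$), multiples of $c$ can be added to the first two coordinates, and $c$ can be cleared again as soon as the first two coordinates still generate $I$. This realises through $\E_3(R)$ every change of generating pair $(a,b) \mapsto (a - pc, b - qc)$ with $c \in I$ that remains in $\V_2(I)$, as well as the signed transposition $(a,b,0) \mapsto (b,a,0)$. I would package the consequences into three moves: (Fact~1) the two coordinates of a generating pair may be swapped; (Fact~2) if $(a, b), (a, b') \in \V_2(I)$ share the first coordinate $a$, then $(a, b, 0) \sim_{\E_3(R)} (a, b', 0)$; and (Fact~3) any two non-zero $a, a' \in I$ admit a common partner $a'' \in I$ with $(a, a''), (a', a'') \in \V_2(I)$. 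Granting these, arbitrary $(a, b)$ and $(a', b')$ are connected by the chain $(a,b) \to (a, a'') \to (a'', a) \to (a'', a') \to (a', a'') \to (a', b')$, alternating Facts~2 and~1, which settles case~$(i)$.

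For Fact~2 I would use that $I$ is $\half$-generated: $I/Ra$ is cyclic, isomorphic to $R/(a:I)$, so $\ov{b}$ and $\ov{b'}$ are unit multiples of one another there, and after an elementary adjustment by a multiple of $a$ it remains to kill a unit $v$ modulo $(a:I)$; this is achieved by moving $b$ into the third slot (legitimate because $v$ is invertible modulo $(Ra:b) \supseteq (a:I)$), subtracting $v$ times it from the second slot, and swapping back. Fact~3 is the crux, and here lies the main obstacle, since the stable-rank estimate alone only gives $\er(I) \le 3$ and the extra gain must come from the hypothesis on proper quotients. I would choose $g$ with $I = Ra + Rg$; as $a$ and $g$ generate $I$, their images generate the cyclic module $I/Ra' \cong R/(a':I)$, so $(\ov{a}, \ov{g})$ is unimodular there. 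The decisive point is that $(a':I)$ is a non-zero proper ideal: it is proper because $I \neq Ra'$ (as $\mu(I) = 2$) and non-zero because $a' \in (a':I)$. Hence $R/(a':I)$ has stable rank $1$, which furnishes $t$ with $g + ta$ a unit modulo $(a':I)$, i.e. $(a', g + ta) \in \V_2(I)$, while $(a, g + ta) \in \V_2(I)$ holds automatically; so $a'' = g + ta$ is the required common partner.

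Finally, for case~$(ii)$ I would mirror the reduction used in Lemma~\ref{LemAlmostStableRankOne}: passing from $I$ to $I/\Jac(R)I$ does not change the elementary rank (Proposition~\ref{PropERank}.$iv$, since $\Jac(R)I \subseteq \Jac(I)$), and for invertible $I$ one has $I/\Jac(R)I \simeq R/\Jac(R)$ by Lemma~\ref{LemCyclicModule}. Over the ring $R/\Jac(R)$, whose proper quotients have stable rank $1$, this is the free module of rank $1$, so $\er \le 2$ follows from the already-treated situation (equivalently, directly from Proposition~\ref{PropERank}.$iii$). The heart of the whole argument is thus Fact~3 together with the reduction to length $3$; everything else is either standard elementary-matrix bookkeeping or an appeal to the ranks recorded in Proposition~\ref{PropERank}.
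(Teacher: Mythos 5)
Your proposal is correct and follows essentially the same route as the paper: both reduce to length $3$ via $\sr(I)\le 2$, both run on the engine $\sr(I/Ra)=\sr(R/(a:I))=1$ for $a\neq 0$ (using $\half$-generation plus the hypothesis on proper quotients, with $(a:I)$ nonzero because $a\in(a:I)$), and both handle case $(ii)$ by passing to $I/\Jac(R)I\simeq R/\Jac(R)$ via Lemma \ref{LemCyclicModule} and Proposition \ref{PropERank}. Your ``common partner'' chain is just a repackaging of the paper's insertion-and-permutation step $(a_1,b,0)\sim(a_1',a_1,b)\sim(a_1',b',0)$, so the two arguments differ only in bookkeeping.
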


\begin{proof}
Let $(a_1, \dots, a_n) \in \V_n(I)$ with $n > 2$. 
If any of $(i)$ or $(ii)$ holds, then $\sr(I) = 2$ by Lemma \ref{LemAlmostStableRankOne}. 
Hence we can assume, without loss of generality, that $n = 3$ and $a_1 \neq 0$. 

We assume first that $(i)$ holds. 
Let $b \in I$ be such that $I = Ra_1 + Rb$. 
Then the following equivalence holds
\begin{equation} \label{EqArrow1}
(a_1, a_2, a_3) \sim_{\El_{3}(R)} (a_1, b, 0).
\end{equation} 
Indeed, since $\sr(I/Ra_1) = \sr(R/(a_1 : I)) = 1$, we have $(a_2 + Ra_1, a_3 + Ra_1) \sim_{\El_2(R)} (b + Ra_1, Ra_1)$ 
from which we easily infer the equivalence (\ref{EqArrow1}).
Fix now $(a_1', a_2', 0) \in \V_3(I)$ with $a_1' \neq 0$ and let $b' \in I$ be such that $I = Ra_1' + Rb'$. Then we have 
\begin{equation} \label{EqArrow2}
(a_1, b, 0)  \sim_{\El_{3}(R)} (a_1', a_1, b)
\end{equation} while the two equivalences
\begin{equation} \label{EqArrow3}
(a_1', a_1, b) \sim_{\El_{3}(R)} (a_1', b', 0), \,
(a_1', a_2', 0)  \sim_{\El_{3}(R)} (a_1', b', 0)
\end{equation} can be derived from (\ref{EqArrow1}) using obvious substitutions. 
Combining (\ref{EqArrow1}), (\ref{EqArrow2}) and (\ref{EqArrow3}) yields $(a_1, a_2, a_3) \sim_{\El_{3}(R)} (a_1', a_2', 0)$, which proves the result.

Let us now assume that $(ii)$ holds. We can also assume, without loss of generality, that  $I \Jac(R) \neq \{0\}$, since otherwise $(i)$ is satisfied. 
Thus $I/I\Jac(R) \simeq R/\Jac(R)$ by Lemma \ref{LemCyclicModule}. 
As we have $\er(I) = \er(I/I\Jac(R)) = \er_{R/\Jac(R)}(I/I\Jac(R))$ by Proposition \ref{PropERank}, the result follows from $(i)$.
\end{proof}

\begin{lemma} \label{LemOneAndAHalf}
Let $I$ be an invertible ideal of $R$. Suppose there is $a \in I$ such that $a + \Jac(R)$ is contained in only finitely maximal ideals of $R$. 
Then there is $b \in I$ such that $I = Ra + Rb$.
\end{lemma}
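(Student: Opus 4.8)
The plan is to establish the equality $Ra + Rb = I$ by checking it after localization at every maximal ideal, exploiting that an invertible ideal is locally principal and that $a$ by itself already generates $I$ away from the finitely many maximal ideals containing it.

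First I would record the local structure of $I$. Since $I$ is invertible, it is finitely generated and projective of constant rank one, so for every maximal ideal $\im$ the localization $I_{\im}$ is free of rank one over $R_{\im}$, hence principal, and $I/\im I \cong R/\im$ is one-dimensional over the residue field. Let $\im_1, \dots, \im_k$ be the maximal ideals of $R$ containing $a$; since $\Jac(R) \subseteq \im$ for every maximal ideal $\im$, these are exactly the maximal ideals $\im$ for which $a + \Jac(R) \subseteq \im$, and by hypothesis there are only finitely many of them. For any maximal ideal $\im \notin \{\im_1, \dots, \im_k\}$ we have $a \notin \im$, so $a$ is a unit of $R_{\im}$; as $a \in I$ this forces $I_{\im} = R_{\im} = (Ra)_{\im}$, so $a$ alone generates $I$ locally at every such $\im$.

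Next I would produce $b$ handling the finitely many remaining primes simultaneously. Because $\im_1, \dots, \im_k$ are distinct maximal ideals they are pairwise comaximal, so the Chinese Remainder Theorem applied to the finitely generated module $I$ yields
\[
I/(\im_1 \cdots \im_k)I \;\cong\; \prod_{i=1}^{k} I/\im_i I \;\cong\; \prod_{i=1}^{k} R/\im_i .
\]
I would then choose $b \in I$ lifting the tuple $(1, \dots, 1)$ under the resulting surjection $I \twoheadrightarrow \prod_i R/\im_i$; thus $b \notin \im_i I$ for each $i$, and Nakayama's lemma applied in $R_{\im_i}$ shows that $b$ generates $I_{\im_i}$.

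Finally I would assemble the local-global conclusion: at each $\im_i$ we have $(Ra+Rb)_{\im_i} \supseteq (Rb)_{\im_i} = I_{\im_i}$, while at every other maximal ideal $\im$ we have $(Ra+Rb)_{\im} \supseteq (Ra)_{\im} = I_{\im}$. Hence $(Ra+Rb)_{\im} = I_{\im}$ for every maximal ideal $\im$, and since $Ra+Rb \subseteq I$ the standard local-global principle for submodules gives $Ra+Rb = I$. The one step carrying genuine content is the patching that produces a single $b$ generating $I$ at all the bad primes at once; this is precisely where the finiteness assumption on the maximal ideals containing $a$ is indispensable, since it is what makes the Chinese Remainder Theorem available. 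The remaining ingredients—local triviality of $I$ off these primes and the local-global principle—are routine.
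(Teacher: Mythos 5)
Your proof is correct, but it takes a genuinely different route from the paper's. You argue locally: off the finitely many maximal ideals containing $a$, the element $a$ already generates $I$ locally (since it is a unit there), and at the finitely many bad maximal ideals $\im_1,\dots,\im_k$ you use the Chinese Remainder Theorem on $I/(\im_1\cdots\im_k)I\cong\prod_i R/\im_i$ to manufacture a single $b$ that is a local generator at each of them, then conclude by the local--global principle for surjectivity. The paper instead stays global: after disposing of the cases $a=0$ and $I=Ra$, it sets $\ib=(a:I)$ so that $Ra=I\ib$, invokes the Gilmer--Heinzer avoidance theorem \cite[Theorem 2]{GH70} to find $b\in I\setminus\bigcup_i I\ip_i$ where the $\ip_i$ are the maximal ideals containing $\ib+\Jac(R)$, deduces $Ra+Rb=I(\ib+\ic)$ with $\ib+\ic+\Jac(R)=R$ for $\ic=(b:I)$, and finishes with Nakayama applied to $I=Ra+Rb+\Jac(R)I$. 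Your version is more self-contained, using only standard facts (invertible $\Rightarrow$ locally free of rank one, CRT, Nakayama, local--global), and it absorbs the paper's special case $a=0$ without separate treatment; the paper's version keeps the argument in the spirit of \cite{GH70}, of which this lemma is a stated generalization, and makes visible the role of $\Jac(R)$ via the final Nakayama step. Two minor points worth flagging in your write-up: the degenerate case $k=0$ (where $a$ is a unit and $b=0$ works) should be mentioned for completeness, and the CRT isomorphism for the module $I$ relies on $\im_1\cdots\im_k\,I=\bigcap_i\im_i I$ for pairwise comaximal maximal ideals, which is standard but deserves a word.
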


Lemma \ref{LemOneAndAHalf} implies that an invertible ideal $I$ of $R$ is generated by $1\sfrac{1}{2}$ element if $R/\Jac(R)$ is of finite character.  
This lemma is a straightforward generalization of \cite[Theorem 3]{GH70}. We provide a proof for the convenience of the reader.

\begin{proof}
If $a = 0$, then $R$ is semi-local so the result holds by \cite[Corollary 2]{GH70}. If $I = Ra$, 
the result is obvious. Otherwise $\ib \Doteq (a : I)$ is a proper ideal of $R$ and satisfies $Ra = I\ib$. Since $Ra + \Jac(R) \subseteq \ib +  \Jac(R)$, 
there are only finitely many maximal ideals $\ip_1,\dots,\ip_n$ of $R$ containing $\ib +  \Jac(R)$. By \cite[Theorem 2]{GH70}, we can find $b \in I \setminus \bigcup_{i = 1}^n I \ip_i$. 
We have $Rb = I \ic$ for $\ic = (b: I)$ and $\ic + \Jac(R)$ is not contained in any of the ideals $\ip_i$. 
Therefore $Ra + Rb = I(\ib + \ic)$ and $R = \ib + \ic + \Jac(R)$ as the latter ideal is not contained in any maximal ideal of $R$. 
Thus $I = Ra + Rb + \Jac(R)I$ and the result follows from Nakayama's Lemma \cite[Corollary 4.8]{Eis95}.
\end{proof}

\begin{corollary} \label{CorSLTransitivity}
Assume that every proper quotient of $R/\Jac(R)$ has stable rank $1$. Let $I$ be an invertible $1\sfrac{1}{2}$-generated ideal of $R$. 
Then $\SL_n(R)$ acts transitively on $\V_n(I)$ for every $n \ge 2$.
\end{corollary}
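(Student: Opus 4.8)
The plan is to split the range of $n$, since only the case $n = 2$ requires a new idea. For $n > 2$, the hypotheses of the corollary are exactly those of Proposition \ref{PropOneAndAHalf} (through case $(ii)$ of Lemma \ref{LemAlmostStableRankOne}), so $\er(I) \le 2$; hence $\E_n(R)$ acts transitively on $\V_n(I)$ for every $n > 2$, and a fortiori so does $\SL_n(R) \supseteq \E_n(R)$. It remains to treat $n = 2$, and here I will use only that $I$ is invertible, not the stable rank hypothesis.

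For $n = 2$, regard $\mb = (a,b)$ and $\mb' = (a',b')$ in $\V_2(I)$ as surjections $\phi, \phi' \colon R^2 \to I$. Since $I$ is projective, both split; write $R^2 = \psi(I) \oplus L$ and $R^2 = \psi'(I) \oplus L'$, with sections $\psi, \psi'$ and $L = \ker \phi$, $L' = \ker \phi'$. Comparing top exterior powers through $R^2 \cong L \oplus I$ in $\bigwedge^2 R^2 \cong R$ gives $L \otimes_R I \cong R \cong L' \otimes_R I$, whence $L \cong L' \cong I^{-1}$. Choosing an isomorphism $h \colon L' \to L$ and the tautological isomorphism $\psi'(I) \to \psi(I)$, $\psi'(m) \mapsto \psi(m)$, one assembles $g \in \GL_2(R)$, namely $g(\psi'(m) + \ell') = \psi(m) + h(\ell')$, which satisfies $\phi g = \phi'$, that is $\mb g = \mb'$. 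Hence $\GL_2(R)$ already acts transitively on $\V_2(I)$.

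To upgrade this to $\SL_2(R)$, set $d = \det g \in R^{\times}$ and look for $\eta$ in the stabilizer of $\mb$ with $\det \eta = d$; then $\eta^{-1} g \in \SL_2(R)$ still sends $\mb$ to $\mb'$. The stabilizer consists of the matrices $\eta = 1_2 + N$ with $\mb N = 0$, i.e.\ both columns of $N$ lie in $L \subseteq R^2$. Since $\det(1_2 + N) = 1 + \trace(N) + \det(N)$ and $\det N = 0$ (its value is the image of an element of $\bigwedge^2 L = 0$, $L$ being of rank one), we get $\det \eta = 1 + \trace N$, where $\trace N$ ranges over $\pr_1(L) + \pr_2(L)$ for $\pr_1, \pr_2$ the coordinate projections. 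Because $L$ is a direct summand of $R^2$, every functional on $L$ extends to $R^2$, so this sum equals the trace ideal of $L$, which is $R$ as $L$ is invertible. Therefore $d - 1 \in \pr_1(L) + \pr_2(L)$, the desired $\eta$ exists, and $\SL_2(R)$ is transitive on $\V_2(I)$.

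The main obstacle is precisely this last passage from $\GL_2$- to $\SL_2$-transitivity: one must know that the stabilizer of a single generating vector already realizes every unit as a determinant. This is where invertibility of $I$ (and not merely its two-generation) is essential, through the fact that the relation module $L \cong I^{-1}$ has trace ideal equal to $R$.
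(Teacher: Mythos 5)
Your proposal is correct, and its overall skeleton matches the paper's: for $n \ge 3$ you invoke Proposition \ref{PropOneAndAHalf} (via case $(ii)$ of Lemma \ref{LemAlmostStableRankOne}) to get $\er(I) \le 2$, exactly as the paper does. The difference is confined to $n = 2$: the paper simply observes that an invertible ideal is a faithful projective module of constant rank $1$ and cites \cite[Corollary 3.2]{Guy20}, whereas you give a self-contained proof of that transitivity statement. Your argument — viewing $\mb, \mb' \in \V_2(I)$ as split surjections $R^2 \twoheadrightarrow I$, identifying both kernels with $I^{-1}$ by comparing $\bigwedge^2 R^2 \cong L \otimes_R I$, assembling $g \in \GL_2(R)$ with $\mb g = \mb'$, and then correcting the determinant inside the stabilizer of $\mb$ using $\det(1_2 + N) = 1 + \trace N$ (since $\bigwedge^2 L = 0$ kills $\det N$) together with the fact that $\pr_1(L) + \pr_2(L)$ is the trace ideal of the invertible module $L$, hence all of $R$ — is sound at every step and is essentially a proof of the cited external result in the case at hand. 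What your route buys is independence from \cite{Guy20} and an explicit identification of where invertibility of $I$ enters (through the trace ideal of the relation module $L \cong I^{-1}$); what the paper's route buys is brevity, since the $n=2$ case is delegated entirely to the reference. Note also that your $n=2$ argument uses neither the $\half$-generation of $I$ nor the stable-rank hypothesis, so it proves the slightly more general statement that $\SL_2(R)$ acts transitively on $\V_2(P)$ for any two-generated faithful projective $P$ of constant rank $1$, which is precisely the content of the result the paper cites.
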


\begin{proof}
The group $\E_n(R)$ acts transitively on $\V_n(I)$ for every $n \ge 3$ by Proposition \ref{PropOneAndAHalf}.
Since $I$ is invertible, it is a faithful projective module of constant rank $1$. 
By \cite[Corollary 3.2]{Guy20}, the group $\SL_2(R)$ acts transitively on $\V_2(I)$. 
\end{proof}

\begin{corollary} \label{CorSLTransitivityFiniteChar}
Assume that $R/\Jac(R)$ is of finite character. Let $I$ be an invertible ideal of $R$. 
Then $\SL_n(R)$ acts transitively on $\V_n(I)$ for every $n \ge 2$.
\end{corollary}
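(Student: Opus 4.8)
The plan is to reduce Corollary \ref{CorSLTransitivityFiniteChar} to Corollary \ref{CorSLTransitivity}, whose hypotheses are almost identical except that the latter requires $I$ to be explicitly $\half$-generated while the present statement only assumes that $R/\Jac(R)$ is of finite character. The bridge between the two is Lemma \ref{LemOneAndAHalf}: I need to verify that, under the finite-character hypothesis on $R/\Jac(R)$, every invertible ideal $I$ is in fact $\half$-generated, so that Corollary \ref{CorSLTransitivity} applies verbatim.

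First I would observe that the hypothesis ``$R/\Jac(R)$ is of finite character'' implies that every proper quotient of $R/\Jac(R)$ has stable rank $1$. Indeed, if $\overline{R} = R/\Jac(R)$ and $\overline{I}$ is a non-zero ideal, then $\overline{R}/\overline{I}$ is semi-local by definition of finite character, hence has stable rank $1$ by Proposition \ref{PropStableRank}.$i$. This is precisely the standing hypothesis of Corollary \ref{CorSLTransitivity}, so that half of its hypotheses is automatically met.

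Next I would establish that $I$ is $\half$-generated, i.e.\ that $I/Rx$ is cyclic for every $x \in R \setminus \{0\}$. The key point is that Lemma \ref{LemOneAndAHalf} already does the essential work: for an invertible ideal $I$, if some $a \in I$ is such that $a + \Jac(R)$ lies in only finitely many maximal ideals, then there is $b \in I$ with $I = Ra + Rb$. When $R/\Jac(R)$ is of finite character, the image of any non-zero $a$ lies in only finitely many maximal ideals (the maximal ideals containing $a$ correspond to the finitely many maximal ideals of the semi-local ring $R/(Ra + \Jac(R))$ when $a \notin \Jac(R)$, and the case $a \in \Jac(R)$ is handled separately via the semi-local conclusion of the lemma). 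Consequently every non-zero $a \in I$ extends to a two-element generating set of $I$, which is exactly the statement that $I$ is $\half$-generated, as already noted in the remark following Lemma \ref{LemOneAndAHalf}.

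Having checked both that every proper quotient of $R/\Jac(R)$ has stable rank $1$ and that the invertible ideal $I$ is $\half$-generated, I can invoke Corollary \ref{CorSLTransitivity} directly to conclude that $\SL_n(R)$ acts transitively on $\V_n(I)$ for every $n \ge 2$. The proof is therefore short, the only genuine content being the two reductions above. The mildest obstacle I anticipate is bookkeeping around elements of $\Jac(R)$: one must be careful that the finite-character property is stated for $R/\Jac(R)$ rather than $R$ itself, so that the application of Lemma \ref{LemOneAndAHalf} (which measures membership modulo $\Jac(R)$) matches the hypothesis exactly; but this is precisely the form in which Lemma \ref{LemOneAndAHalf} is phrased, so no friction should arise.
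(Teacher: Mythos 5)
Your proof is correct and follows exactly the paper's route: the paper's own proof is the one-line ``Combine Lemma \ref{LemAlmostStableRankOne}, Lemma \ref{LemOneAndAHalf} and Corollary \ref{CorSLTransitivity},'' and your two reductions (finite character of $R/\Jac(R)$ gives stable rank $1$ of its proper quotients via Proposition \ref{PropStableRank}.$i$, and Lemma \ref{LemOneAndAHalf} gives that $I$ is $\half$-generated) are precisely how those ingredients combine. Your explicit attention to the case $a \in \Jac(R)$ is in fact slightly more careful than the paper's own remark following Lemma \ref{LemOneAndAHalf}.
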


\begin{proof}
Combine Lemma \ref{LemAlmostStableRankOne}, Lemma \ref{LemOneAndAHalf} and
Corollary \ref{CorSLTransitivity}.
\end{proof}

A ring $R$ is a \emph{$PP$-ring} if every principal ideal of $R$ is projective. 
A coherent Prüfer ring is, for instance, a PP-ring. Coherent Prüfer rings can be characterized as the arithmetic PP-rings \cite[Theorem XII.4.1]{LQ15}.

\begin{proposition} \label{PropTransitivityPP}
Assume $R$ is a $PP$-ring and let $I$ be an invertible ideal of $R$. Suppose moreover that the two following hold:
\begin{itemize}
\item[(i)] $\sr(R/Rr) = 1$ for every regular element $r \in R$.
\item[(ii)] $\mu(I/Rr) = 1$ for every regular element $r \in I$.
\end{itemize}
Then $I$ is generated by two regular elements and we have:
\begin{itemize}
\item $\sr(I) \le 2$, 
\item $\er(I) \le 2$.
\end{itemize}  
In particular, we have $\sr(R) \le 2$ and $R$ is Hermite. In addition, the group $\SL_n(R)$ acts transitively on $\V_n(I)$ for every $n \ge 2$.
\end{proposition}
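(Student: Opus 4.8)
The plan is to first exhibit two \emph{regular} generators of $I$ using the idempotents supplied by the $PP$-hypothesis, and then to deduce the rank bounds by adapting the arguments of Lemma \ref{LemAlmostStableRankOne}$(i)$ and Proposition \ref{PropOneAndAHalf}$(i)$; the whole point will be to arrange a regular pivot, since it is for a regular pivot that hypotheses $(i)$ and $(ii)$ can be brought to bear. First I would record two standard facts about a $PP$-ring $R$: it is reduced (if $r^2=0$ then $r\in\ann(r)=Re$ forces $r=re=0$), and the annihilator of any element is generated by an idempotent. Since $I$ is invertible it contains a regular element $a$, and $(ii)$ gives $I=Ra+Rb$ for some $b\in I$. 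Writing $\ann(b)=Re$ with $e$ idempotent, I would replace $b$ by $b'=b+ea$: in the decomposition $R\simeq R(1-e)\times Re$ the components of $b'$ are $b$ and $ae$, which are regular in $R(1-e)$ and $Re$ respectively, so $b'$ is regular, while $I=Ra+Rb'$ still holds. This proves that $I$ is generated by two regular elements.

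For $\sr(I)\le 2$ and $\er(I)\le 2$ the obstruction is that $I$ need not be $\half$-generated: splitting $I/Rx$ along the idempotent generating $\ann(x)$ for a zero divisor $x$ leaves an invertible but possibly non-principal summand, so Lemma \ref{LemAlmostStableRankOne} and Proposition \ref{PropOneAndAHalf} cannot be applied verbatim. To get around this, given $(m_1,m_2,m_3)\in\V_3(I)$ I would use the idempotents generating $\ann(m_1),\ann(m_2),\ann(m_3)$ to write $R$ as a \emph{finite} product $R=\prod_j R_j$ in which each $m_i$ is, in every factor $R_j$, either zero or regular; since $I$ is regular, at least one $m_i$ is regular in each $R_j$. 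Each $R_j$ is again a $PP$-ring with $IR_j$ invertible, and hypotheses $(i)$ and $(ii)$ descend to $R_j$ by lifting regular elements across the product (adjoin the unit, resp. a regular element of $I$, on the complementary factor, so that $R/R\tilde s\cong R_j/R_js$, resp. $I/R\tilde s\cong IR_j/R_js$ up to a cyclic complementary factor).

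Working factor by factor with a regular element as pivot, I can then run the reduction exactly as in Lemma \ref{LemAlmostStableRankOne}$(i)$: for a regular pivot $a$ one has $Ra=I\,(a:I)$ with $(a:I)=aI^{-1}\ni a$, so $(a:I)$ is a regular ideal and $R/(a:I)$, together with all its further quotients, has stable rank $1$ by $(i)$, while $I/Ra\simeq R/(a:I)$ is cyclic by $(ii)$ and Lemma \ref{LemCyclicModule}; this is precisely what the intersection-of-maximal-ideals argument requires. Because generation of $IR_j$ is checked factorwise and $\E_n(\prod_j R_j)=\prod_j\E_n(R_j)$ for finite products, the per-factor choices glue to global $r_1,r_2\in R$, giving $\sr(I)\le 2$, and the same scheme applied to Proposition \ref{PropOneAndAHalf}$(i)$ gives $\er(I)\le 2$. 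Taking $I=R$ then yields $\sr(R)\le 2$, and the transitivity of $\SL_n(R)$ on $\V_n(I)$ follows as in Corollary \ref{CorSLTransitivity}: for $n\ge 3$ from $\er(I)\le 2$, and for $n=2$ from the fact that $I$ is a faithful projective module of constant rank $1$ via \cite[Corollary 3.2]{Guy20}, whence in particular $R$ is Hermite. I expect the main obstacle to be exactly the failure of the $\half$-generated property at zero divisors, which the finite $PP$-decomposition into factors with regular-or-zero coordinates is designed to neutralize.
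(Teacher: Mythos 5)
Your proposal is correct and follows essentially the same route as the paper: both produce two regular generators from the $PP$-structure, reduce to a regular pivot by splitting $R$ along idempotents generating annihilators (the paper splits only along $(0:a)$ for the first component, you split along all three and make explicit the descent of hypotheses $(i)$--$(ii)$ to the factors, which the paper leaves implicit), exploit $I/Ra\simeq R/(a:I)$ with $(a:I)=aI^{-1}$ regular, and invoke \cite[Corollary 3.2]{Guy20} for the $\SL_2$-transitivity. The only real difference is cosmetic: for $\er(I)\le 2$ the paper, having established $\sr(I)\le 2$, simply inserts the global regular generator $x$ as the third component and pivots on it, avoiding a second factorwise argument.
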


\begin{proof}
Since $I$ is invertible, it contains a regular element $x$. Using hypothesis $(ii)$, we infer that $I = Rx + Rx'$ for some $x' \in R$.
As $R$ is a $PP$-ring, it is additively regular \cite[Propositions 1.4 and 2.8]{Mat83}. 
Hence we can find $\lambda \in R$ such that $y \Doteq x + \lambda x'$ is regular. Thus $I$ is generated by the two regular elements $x$ and $y$.

Let us prove first that $\sr(I) \le 2$. To do so, consider $(a, b, c) \in \V_3(I)$. If $a = 0$, we can replace, without loss of generality, $a$ with $x$.
In this case, we can suppose $a$ regular so that $\sr(I/Ra) = \sr(R/(a: I)) = 1$. 
Consequently, there is $r \in R$ such that  $b + rc + Ra$ generates $I/Ra$, i.e., $(a, b + rc) \in \V_2(I)$.
If $a$ is not regular, then $(0:a) = Re$ for some idempotent $e \in R$. 
Applying the previous reasoning to $Ie$ and $I(1- e)$ in the PP-rings $Re$ and $R(1 - e)$ 
yields $(r_1, r_2) \in R^2$ such that $(a + r_1c, b + r_2c) \in \V_2(I)$.

Let us prove now that $\er(I) \le 2$.
Let $(a, b, c, a_1 \dots, a_{n - 3}) \in \V_n(I)$ with $n > 2$. Because $\sr(I) \le 2$, we can assume without loss of generality that $n = 3$ and $c = x$. 
Using the fact that $\sr(I/Rx) = \sr(R/(x: I)) = 1$, we deduce that $(a + Rx, b + Rx) \sim_{\E_2(R)} (0, y + Rx)$ 
and hence $(a, b, c) \sim_{\E_3(R)} (x, y, 0)$. Therefore $\er(I) \le 2$.

As $\SL_2(R)$ acts transitively on $\V_2(I)$ by \cite[Corollary 3.2]{Guy20}, we deduce that $\SL_n(R)$ acts transitively on $\V_n(I)$  for every $n \ge 2$.
\end{proof}
 
\section{Direct products of cyclic modules and ideals} \label{SecDirectProducts}
By Theorems \ref{ThEDRDecomposition} and \ref{ThInvariantDecomposition}, an $R$-module $M$ as in Theorems \ref{ThSL} and \ref{ThSK1} 
decomposes into a direct product of cyclic modules and ideals. 
In this section, we describe the action of $G \in \{\SL_n(R), \E_n(R)\}$ on the generating vectors of  
such a direct product under assumptions which are less restrictive than those of Theorems \ref{ThSL} and \ref{ThSK1}.

Let $\ia_1,\dots,\ia_k$ be ideals of $R$. We denote by $e_i$ ($1 \le i \le k$)  the element of the direct product 
$R/\ia_1 \times \cdots \times R/\ia_k$ whose $i$-th component is the identity of $R/\ia_i$ and whose other components are zero.
If  $\mb = (m_1,\dots, m_n) \in (M \times N)^n$ with $M = R/\ia_1 \times \cdots \times R/\ia_k$ and $N$ a finitely generated module, we let 
$$\Mat(\mb) \Doteq ((m_j)_i)$$
be the $(k + 1) \times n$ matrix whose columns are the elements $m_j = ((m_j)_i)$ with 
$(m_j)_{k  + 1} \in N$ and $(m_j)_i \in R/\ia_i$ for $1 \le i \le k$ and $1 \le j \le n$.
Clearly, we have $\Mat(\mb A) = \Mat(\mb) A$ for every $n \times n$ matrix $A$ over $R$.

\subsection{Products with two factors}

This section is dedicated to modules of the form $R/\ia \times I$ ($\ia, I \subseteq R$). 
These modules are used as bases in each induction proof of Section \ref{SecMoreThanTwo}.

\begin{lemma} \label{LemReduction}
Let $M$ be a finitely generated $R$-module and let $\mathfrak{b}$ be an ideal of $R$ contained in $\ann(M)$. 
Let $\mb = ((m_1, 0), \dots, (m_{n - 1}, 0), (m_n, 1)) \in \V_n(M \times R/\mathfrak{b})$ with $n \ge \mu(M)$.
Then $\mb \sim_{\E_n(R)} ((m_1, 0), \dots, (m_{n - 1}, 0), (0, 1))$.
\end{lemma}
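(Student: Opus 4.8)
The plan is to reduce the last column of $\mb$ to $(0,1)$ by subtracting from it a suitable $R$-combination of the first $n-1$ columns, each of which has trivial second coordinate. For this to work, the point I must establish first is that $m_n$ lies in the submodule of $M$ generated by $m_1,\dots,m_{n-1}$; this is precisely where the hypothesis $\mathfrak{b}\subseteq\ann(M)$ is used, and it is the only nontrivial step.

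To prove this containment I would argue as follows. Since the components of $\mb$ generate $M\times R/\mathfrak{b}$, the element $(m_n,0)$ can be written as $(m_n,0)=\sum_{i=1}^{n-1} r_i(m_i,0)+r_n(m_n,1)$ for some $r_i\in R$. Comparing the second coordinates forces $r_n\equiv 0\pmod{\mathfrak{b}}$, hence $r_n\in\mathfrak{b}\subseteq\ann(M)$ and therefore $r_nm_n=0$. Comparing first coordinates then gives $m_n=\sum_{i=1}^{n-1} r_im_i$, as desired. This is the crux of the argument, and it is exactly the place where the annihilator assumption is indispensable.

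With the relation $m_n=\sum_{i=1}^{n-1} r_im_i$ in hand, I would apply to $\mb$ the elementary transformations $E_{i,n}(-r_i)$ for $i=1,\dots,n-1$ (right multiplication, each subtracting $r_i$ times the $i$-th column from the $n$-th). Columns $1,\dots,n-1$ are left untouched throughout, and each has zero second coordinate, so the cumulative effect is to replace the last column $(m_n,1)$ by $\bigl(m_n-\sum_{i=1}^{n-1} r_im_i,\,1\bigr)=(0,1)$ while all other columns remain $(m_i,0)$. The product $\prod_{i=1}^{n-1} E_{i,n}(-r_i)$ lies in $\E_n(R)$, yielding precisely $\mb\sim_{\E_n(R)}((m_1,0),\dots,(m_{n-1},0),(0,1))$.

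I expect the only genuine obstacle to be the containment $m_n\in\langle m_1,\dots,m_{n-1}\rangle$; once it is secured, the elementary reduction is immediate. In the degenerate case $n=1$ the same comparison of coordinates forces $m_1=0$ outright, so there is nothing to reduce.
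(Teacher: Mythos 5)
Your proof is correct and follows the same route as the paper: express $(m_n,0)$ as an $R$-combination of the components of $\mb$, use the second coordinate to force $r_n\in\mathfrak{b}\subseteq\ann(M)$, conclude $m_n\in\langle m_1,\dots,m_{n-1}\rangle$, and finish with elementary column operations. The only difference is that you spell out the final elementary reduction, which the paper leaves implicit.
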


\begin{proof}
It suffices to show that $m_n$ lies in the submodule of $M$ generated by $m_1, \dots, m_{n - 1}$.
Since $\mb$ generates $M \times R/\mathfrak{b}$, we can find $\rb = (r_1, \dots, r_n) \in R^n$ such that 
$\mb \rb^{\top} = (m_n, 0)$. We infer from the previous identity that $r_n \in \mathfrak{b}$ and hence $m_n = \sum_{i = 1}^{n - 1}r_i m_i$.
\end{proof}

\begin{lemma} \label{LemRPlusCyclic}
Let $M = R/\ia \times R$ with $\ia$ an ideal of $R$. Let $\mb \in \V_2(M)$.
Then $\ia = \Fitt_1(M)$ and $\mb \sim_{\SL_2(R)} (\Det_{\eb}(\mb)e_1, e_2)$
where $\eb = (e_1, e_2)$ with $e_1 = (1, 0)$ and $e_2 = (0, 1)$. 
In particular, the map $\det_{\eb}$ induces a bijection from $\V_2(M)/\SL_2(R)$ onto $(R/\ia)^{\times}$.
\end{lemma}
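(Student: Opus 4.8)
The plan is to compute the Fitting ideal directly, trivialize the free summand using transitivity of $\SL_2(R)$ on $\V_2(R)$, and then exploit the generating condition to clear the last entry by a single elementary move; the value $\det_{\eb}(\mb)$ will be read off at the end via Lemma~\ref{LemDetPi}.

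First I would settle $\ia = \Fitt_1(M)$. Writing $\eb = (e_1, e_2)$ with $e_1 = (1,0)$ and $e_2 = (0,1)$, a relation $r_1 e_1 + r_2 e_2 = 0$ in $M = R/\ia \times R$ holds precisely when $r_1 \in \ia$ and $r_2 = 0$. Hence the set of coefficients involved in a relation of $\eb$ is exactly $\ia$, and Lemma~\ref{LemFittMuMinusOne} gives $\Fitt_1(M) = \ia$. (Here $\mu(M) = 2$ whenever $\ia \neq R$; the case $\ia = R$ is degenerate, since then $M \simeq R$ and the statement reduces to transitivity of $\SL_2(R)$ on $\V_2(R)$.) In particular $\det_{\eb}$ takes values in $(R/\ia)^{\times}$.

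Next, let $\mb = (m_1, m_2) \in \V_2(M)$ and let $p \colon M \to R$ be the projection onto the second factor. As $p$ is $R$-linear and applied componentwise, it intertwines right multiplication by matrices; since $\mb$ generates $M$, the row $p(\mb) = (p(m_1), p(m_2))$ lies in $\V_2(R)$. Because $\SL_2(R)$ acts transitively on $\V_2(R)$, there is $\gamma \in \SL_2(R)$ with $p(\mb)\gamma = (0,1)$, so $\mb' \Doteq \mb\gamma$ has the form $\mb' = ((x_1, 0), (x_2, 1))$ for some $x_1, x_2 \in R/\ia$. The crux of the argument is now the observation that the submodule generated by $\mb'$ equals $\{(rx_1 + s x_2, s) : r, s \in R\}$, which is all of $M$ if and only if $R x_1 = R/\ia$, i.e. $x_1 \in (R/\ia)^{\times}$: trivializing the free part forces the surviving coefficient to be a unit. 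Lifting $-x_2 x_1^{-1}$ to some $t \in R$ and multiplying on the right by the elementary matrix $\left(\begin{smallmatrix} 1 & t \\ 0 & 1 \end{smallmatrix}\right) \in \E_2(R)$, which adds $t$ times the first column to the second, yields $((x_1, 0), (0, 1)) = (x_1 e_1, e_2)$.

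Finally I would identify $x_1$ with $\det_{\eb}(\mb)$. Since $\gamma$ and the elementary matrix both lie in $\SL_2(R)$, Lemma~\ref{LemDet}$(i)$ gives $\det_{\eb}(\mb) = \det_{\eb}(x_1 e_1, e_2)$; applying Lemma~\ref{LemDetPi} through $\pi \colon M \to M/\ia M \simeq (R/\ia)^2$ (under which $\pi(\eb)$ is the canonical basis) computes the latter as the determinant of $\diag(x_1, 1)$, namely $x_1$. Thus $\mb \sim_{\SL_2(R)} (\det_{\eb}(\mb)\, e_1, e_2)$, as claimed. The bijection statement then follows formally: $\det_{\eb}$ is constant on $\SL_2(R)$-orbits by Lemma~\ref{LemDet}$(i)$; it is injective on $\V_2(M)/\SL_2(R)$ because two vectors with the same value reduce to the same representative $(u e_1, e_2)$; and it is surjective since for every $u \in (R/\ia)^{\times}$ the pair $(u e_1, e_2)$ lies in $\V_2(M)$ and has $\det_{\eb}$-value $u$. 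The only non-routine point is the unit observation; the rest is bookkeeping with the two determinant lemmas.
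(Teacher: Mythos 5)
Your proposal is correct and follows essentially the same route as the paper: both trivialize the free factor by applying transitivity of $\SL_2(R)$ on $\V_2(R)$ to the second components, then clear the remaining off-diagonal entry with an elementary matrix, and read off the value of $\det_{\eb}$ via the reduction modulo $\Fitt_1(M)=\ia$. The only cosmetic difference is that the paper delegates the clearing step to Lemma \ref{LemReduction}, whereas you re-derive it directly by observing that $x_1$ must be a unit of $R/\ia$; the content is identical.
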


\begin{proof}
The fact that $\ia = \Fitt_1(M)$ is a direct consequence of Lemma \ref{LemFittMuMinusOne}.
By definition, there is a $2 \times 2$ matrix $A$ over $R$ such that $\mb = \eb A$.
Since $\SL_2(R)$ acts transitively on $\V_2(R)$, we can find $\sigma \in \SL_2(R)$ such that 
$A \sigma = \begin{pmatrix} u & v \\ 0 & 1  \end{pmatrix}$ for some $u, v \in R$.  As $\mb \sigma$ generates $M$, 
Lemma \ref{LemReduction} applies with $\mathfrak{b} = \{0\}$ so that $\mb \sim_{\SL_2(R)} (ue_1, e_2)$. 
Therefore $\det_{\eb}(\mb) = u + \Fitt_1(M) = u + \ia$, which completes the proof.
\end{proof}

\begin{proposition} \label{PropSL2AndFitt1}
Let $M$ be a two-generated faithful $R$-module such that $M_{\ip}$ is isomorphic to 
$R_{\ip}/\ia_{\ip} \times R_{\ip}$, for every maximal ideal $\ip$ of $R$, with $\ia_{\ip}$ some ideal of $R_{\ip}$. 
Let $\mb, \mb' \in \V_2(M)$.
Then the following are equivalent.
\begin{itemize}
\item[$(i)$] $\mb \sim_{\SL_2(R)} \mb'$,
\item[$(ii)$] $\det_{\mb}(\mb') = 1$.
\end{itemize}
\end{proposition}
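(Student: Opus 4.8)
The implication $(i)\Rightarrow(ii)$ is immediate: if $\mb'=\mb\sigma$ with $\sigma\in\SL_2(R)$, then Lemma~\ref{LemDet}$(i)$ gives $\det_{\mb}(\mb')=\det_{\mb}(\mb)\det(\sigma)=1$, since $\det_{\mb}(\mb)=1$ by definition of $\phi_{\mb}$ and $\det(\sigma)=1$. The substance is $(ii)\Rightarrow(i)$, and the plan is to turn the equality $\det_{\mb}(\mb')=1$ into an explicit element of $\SL_2(R)$ carrying $\mb$ to $\mb'$. Write $\ia\Doteq\Fitt_1(M)$; as Fitting ideals commute with localization, $\ia R_{\ip}=\Fitt_1(M_{\ip})=\ia_{\ip}$ for every maximal ideal $\ip$. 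Because $\mb$ generates $M$, I fix a $2\times2$ matrix $A$ over $R$ with $\mb'=\mb A$; then Lemma~\ref{LemDet}$(i)$ reads $\det_{\mb}(\mb')=\det(A)+\ia$, so $(ii)$ says exactly that $\det(A)=1+a$ for some $a\in\ia$. Setting $K\Doteq\{r\in R^{2}:\mb r^{\top}=0\}$ for the relations of $\mb$, and noting that $\mb(A+B)=\mb'$ whenever the columns of $B$ lie in $K$, the goal reduces to producing such a $B$ with $\det(A+B)=1$; then $\sigma\Doteq A+B\in\SL_2(R)$ solves the problem.

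Writing $A_1,A_2$ for the columns of $A$ and expanding $\det(A_1+v,A_2+w)$ for two relations $v,w\in K$, the plan rests on two facts, both to be proved locally through Lemma~\ref{LemRPlusCyclic}. The first, which I expect to be the crux, is that the quadratic cross-term vanishes: $\det(v,w)=0$ for all $v,w\in K$. Indeed $\det(v,w)$ is the image of $v\wedge w$ in $\bigwedge^2 R^2\cong R$, and it suffices to check that it is zero after localizing at each maximal ideal $\ip$; but $K_{\ip}$ is the kernel of $R_{\ip}^{2}\to M_{\ip}\cong R_{\ip}/\ia_{\ip}\times R_{\ip}$, which by Lemma~\ref{LemRPlusCyclic} is $\SL_2(R_{\ip})$-conjugate to $\ia_{\ip}\times 0$ and hence a rank-one ``line'' in $R_{\ip}^{2}$, so any two of its elements have vanishing wedge. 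The second fact is that the two linear ``determinant-change'' ideals $J_1\Doteq\{\det(v,A_2):v\in K\}$ and $J_2\Doteq\{\det(A_1,w):w\in K\}$ satisfy $J_1+J_2=\ia$. By Lemma~\ref{LemFittMuMinusOne} the coordinates of any $r\in K$ lie in $\ia$, so $J_1,J_2\subseteq\ia$; for the reverse inclusion I would again work at each $\ip$, using that $J_1,J_2$ are unchanged when $\mb$ is replaced by an $\SL_2(R_{\ip})$-translate and reducing $\mb$ to the normal form $(ue_1,e_2)$ of Lemma~\ref{LemRPlusCyclic}. There $K_{\ip}=\ia_{\ip}\times0$ gives $(J_1)_{\ip}=\ia_{\ip}A_{22}$ and $(J_2)_{\ip}=\ia_{\ip}A_{21}$, with $(A_{21},A_{22})$ unimodular because the images of $\mb'$ generate the free factor $R_{\ip}$ of $M_{\ip}$; hence $(J_1+J_2)_{\ip}=\ia_{\ip}$ for all $\ip$, and therefore $J_1+J_2=\ia$.

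With both facts in hand the conclusion is purely formal. Since $-a\in\ia=J_1+J_2$, I can write $-a=\det(v_0,A_2)+\det(A_1,w_0)$ for some $v_0,w_0\in K$, and let $B$ be the matrix with columns $v_0,w_0$. Then $\mb(A+B)=\mb'$, and
\[
\det(A+B)=\det(A)+\det(v_0,A_2)+\det(A_1,w_0)+\det(v_0,w_0)=(1+a)+(-a)+0=1,
\]
using the vanishing of the cross-term. Thus $\sigma\Doteq A+B\in\SL_2(R)$ satisfies $\mb\sigma=\mb'$, which is $(i)$. Faithfulness of $M$ enters in ensuring that $\ann(\bigwedge^2 M)=\ia$ governs the whole picture, so that $\det_{\mb}$ genuinely detects the orbit; the only real work is the local verification of the two displayed facts, the vanishing of $\det(v,w)$ being the decisive point that linearizes the determinant correction.
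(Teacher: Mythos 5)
Your proof is correct, but it takes a genuinely different route from the paper's. The paper disposes of $(ii)\Rightarrow(i)$ in one line by invoking the local--global principle for $\SL_2$-equivalence of generating pairs of faithful two-generated modules (\cite[Proposition 3.1]{Guy20}) together with Lemma~\ref{LemDet}.$iii$, which reduces everything to the local case, where Lemma~\ref{LemRPlusCyclic} and Lemma~\ref{LemDet}.$ii$ finish the job. You instead construct the matrix in $\SL_2(R)$ explicitly: writing $\mb'=\mb A$ and perturbing the columns of $A$ by relations of $\mb$, you reduce the problem to two ideal-theoretic identities --- the vanishing of $\det(v,w)$ on the relation module $K$ and the equality $J_1+J_2=\Fitt_1(M)$ --- each of which you verify locally using the normal form of Lemma~\ref{LemRPlusCyclic}; since ideals (and the image of a module under a linear map) localize well, this is sound, and your computation $(J_1+J_2)_{\ip}=\ia_{\ip}(A_{21},A_{22})=\ia_{\ip}$ via the unimodularity of the second row is correct. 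In effect you reprove the relevant instance of the cited local--global principle from scratch, so the argument is self-contained where the paper's is not, and it exhibits concretely how the determinant defect $a\in\Fitt_1(M)$ is absorbed by relations; the cost is length, and you use localization only to verify identities of ideals and elements rather than to patch group elements, which is why no faithfulness hypothesis is actually consumed in your version (the paper needs it to apply \cite[Proposition 3.1]{Guy20}).
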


\begin{sproof}{Proof}
The implication $(i) \Rightarrow (ii)$ follows from Lemma \ref{LemDet}.$i$.
Let us prove $(ii) \Rightarrow (i)$. By \cite[Proposition 3.1]{Guy20} and Lemma \ref{LemDet}.$iii$, we can assume that $R$ is local. 
Applying Lemma \ref{LemRPlusCyclic} and Lemma \ref{LemDet}.$ii$ yields the result.
\end{sproof}

Given an ideal $\ia$ of $R$, we denote by $\E_{12}(\ia)$ the group of matrices of the form 
$
\begin{pmatrix}
1 & r \\
0 & 1
\end{pmatrix}
$
with $r \in \ia$.

\begin{lemma} \label{LemRPlusCyclicE}
Let $M = R/\ia \times R$ with $\ia$ an ideal of $R$ and let $\eb \Doteq (e_1, e_2)$. 
For every  $\mb \in \V_2(M)$, fix $\sigma_{\mb} \in \SL_2(R)$ such that $\mb = (\Det_{\eb}(\mb)e_1, e_2) \sigma_{\mb}$. 
Then the two following hold.
\begin{itemize}

\item[$(i)$]
The map 
$\Delta_{\eb}: \mb \E_2(R) \mapsto \left(\rm{det}_{\eb}(\mb), \E_{12}(\ia) \sigma_{\mb} \E_2(R)\right)$ 
defines a bijection from $\V_2(M)/\E_2(R)$ onto 
$(R/\ia)^{\times} \times \left(\E_{12}(\ia)\backslash  \SL_2(R)/\E_2(R) \right)$.
\item[$(ii)$]
If $G$ is a normal subgroup of $\SL_2(R)$ containing $\E_2(R)$, then $\V_2(M)/G$ is equipotent with
$(R/\ia)^{\times} \times \SL_2(R)/G$.
\end{itemize}
\end{lemma}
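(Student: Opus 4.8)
The plan is to reduce both parts to a single stabiliser computation and then to keep careful track of cosets. Throughout write $u = \det_{\eb}(\mb) \in (R/\ia)^{\times}$, so that $u e_1 = (u,0) \in M$ and $(u e_1, e_2) \in \V_2(M)$ (the two components generate $M$ since $u$ is a unit modulo $\ia$). Lemma \ref{LemRPlusCyclic} guarantees that the matrices $\sigma_{\mb}$ exist and gives $\det_{\eb}(u e_1, e_2) = u$; the latter also follows directly from Lemma \ref{LemDet}.$i$ applied to $\eb$ and $A = \diag(\tilde u, 1)$ for a lift $\tilde u$ of $u$. The key step is to compute, for every such $u$, the stabiliser of $(u e_1, e_2)$ under the right action of $\SL_2(R)$. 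Writing a candidate $\begin{pmatrix} a & b \\ c & d \end{pmatrix}$ and expanding $(u e_1, e_2)\begin{pmatrix} a & b \\ c & d \end{pmatrix} = (au\, e_1 + c\, e_2,\ bu\, e_1 + d\, e_2)$, the fixed-point equations force $c = 0$ and $d = 1$, hence $a = 1$ by the determinant condition, and finally $bu \equiv 0 \pmod{\ia}$; since $u$ is a unit modulo $\ia$ this is exactly $b \in \ia$. Thus $\operatorname{Stab}_{\SL_2(R)}(u e_1, e_2) = \E_{12}(\ia)$, independently of $u$. This is the one genuine computation; everything after it is bookkeeping.

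With the stabiliser in hand, I would first record the two ambiguities. If $\mb = (u e_1, e_2)\sigma = (u e_1, e_2)\sigma'$ then $\sigma(\sigma')^{-1} \in \E_{12}(\ia)$, so the left coset $\E_{12}(\ia)\sigma_{\mb}$ depends only on $\mb$; and replacing $\mb$ by $\mb g$ with $g \in \SL_2(R)$ leaves $u$ unchanged (as $\det g = 1$, by Lemma \ref{LemDet}.$i$) and permits the choice $\sigma_{\mb g} = \sigma_{\mb} g$. It is then efficient to prove one uniform statement for an arbitrary subgroup $G \subseteq \SL_2(R)$: the assignment $\mb G \mapsto \big(\det_{\eb}(\mb),\ \E_{12}(\ia)\sigma_{\mb} G\big)$ is a well-defined bijection from $\V_2(M)/G$ onto $(R/\ia)^{\times} \times \big(\E_{12}(\ia)\backslash \SL_2(R)/G\big)$. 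Well-definedness is the content of the two remarks just made. For surjectivity, given $(u, \E_{12}(\ia)\sigma G)$ the vector $\mb \Doteq (u e_1, e_2)\sigma$ lies in $\V_2(M)$, satisfies $\det_{\eb}(\mb) = u$, and admits $\sigma_{\mb} = \sigma$. For injectivity, if the invariants of $\mb$ and $\mb'$ coincide then $u = u'$ and $\sigma_{\mb'} = \eta\,\sigma_{\mb}\,g$ for some $\eta \in \E_{12}(\ia)$ and $g \in G$; since $\eta$ fixes $(u e_1, e_2)$ we obtain $\mb' = (u e_1, e_2)\sigma_{\mb'} = (u e_1, e_2)\sigma_{\mb} g = \mb g$, whence $\mb' \sim_{G} \mb$.

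Part $(i)$ is the special case $G = \E_2(R)$. For part $(ii)$ I would apply the uniform statement to the given $G$ and simplify the double-coset factor: since $\E_{12}(\ia) \subseteq \E_{12}(R) \subseteq \E_2(R) \subseteq G$ and $G$ is normal in $\SL_2(R)$, for every $\sigma$ and every $\eta \in \E_{12}(\ia)$ one has $\eta\sigma G = \sigma(\sigma^{-1}\eta\sigma)G = \sigma G$, so the natural map $\E_{12}(\ia)\backslash \SL_2(R)/G \to \SL_2(R)/G$ is a bijection; composing yields the equipotence $\V_2(M)/G \simeq (R/\ia)^{\times} \times \SL_2(R)/G$. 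The only point demanding care is the interplay of the two ambiguities — the choice of $\sigma_{\mb}$ (a left $\E_{12}(\ia)$-coset) and the choice of orbit representative (a right $G$-coset) — which is precisely what the double coset $\E_{12}(\ia)\sigma_{\mb} G$ absorbs; the stabiliser computation is exactly what makes these two ambiguities line up.
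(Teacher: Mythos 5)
Your proof is correct and follows essentially the same route as the paper's: existence of $\sigma_{\mb}$ from Lemma \ref{LemRPlusCyclic}, the computation that the stabilizer of $(ue_1,e_2)$ in $\SL_2(R)$ is $\E_{12}(\ia)$, and then coset bookkeeping. The only cosmetic difference is that you prove a single uniform statement for an arbitrary subgroup $G \subseteq \SL_2(R)$ and specialize, whereas the paper treats $(i)$ and $(ii)$ separately; the substance is identical.
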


In general $\E_2(R)$ is not a normal subgroup of $\SL_2(R)$, see  e.g.,  \cite[Theorem 1.5]{Nic11} or \cite[Claim I.8.16]{Lam06}. 
More tractable subgroups $G$ could be considered instead in Lemma \ref{LemRPlusCyclicE}.$ii$, e.g., the normal closure of 
$\E_2(R)$ in $\SL_2(R)$ or the group of $2 \times 2$ unipotent matrices over $R$.

\begin{example}
Let $R$ be the ring of integers of a totally imaginary quadratic field. If $R$ is not a $GE$-ring, then 
it follows from \cite[Theorem 2.4]{FF88} and \cite[Theorem IV.2.6]{LS77} that $\E_{12}(R)\backslash \SL_2(R)/\E_2(R)$ is infinite (alternatively use \cite[Theorem 1.5]{Nic11} and \cite{She17}). 
In particular, $\V_2(R/\ia \times R)/\SL_2(R)$ is infinite for every ideal  $\ia \subseteq R$. Note that \cite[Proof of Proposition 7.5]{Stan17} offers a 
geometric insight on $\SL_2(R)/\E_2(R)$.

\end{example}

\begin{sproof}{Proof of Lemma \ref{LemRPlusCyclicE}}
$(i)$. Given $\mb \in \V_2(M)$, the existence of $\sigma_{\mb}$ is provided by Lemma \ref{LemRPlusCyclic}. In order to see that 
$\Delta_{\eb}$ is well-defined, observe that $\det_{\eb}$ is $\E_2(R)$-invariant and that the stabilizer of 
$\mb = (\delta e_1, e_2)$ in $\SL_2(R)$ is $\E_{12}(\ia)$ for every $\delta \in (R/\ia)^{\times}$. Checking that $\Delta_{\eb}$ is bijection is straightforward.
$(ii)$. Check that $\mb G \mapsto  \left(\rm{det}_{\eb}(\mb), \sigma_{\mb} \it G \right)$ is well-defined and argue as in $(i)$.
\end{sproof}

\begin{lemma} \label{LemCyclicPlusIdeal}
Let $I$ be a two-generated ideal of $R$. Let $\ia$ be an ideal of $R$ such that $\glr_R(R/\ia) = 1$. Then $R/\ia \times I$ 
is two-generated if and only if there is $(a, b) \in \V_2(I)$ such that $(1, a)$ and $(0, b)$ generate $R/\ia \times I$. 
Moreover, for every $(a, b) \in \V_2(I)$,  the following are equivalent:
\begin{itemize}
\item[$(i)$] $R/\ia \times I$ is generated by $(1, a)$ and $(0, b)$.
\item[$(ii)$] $(b: a) + \ia = R$.
\end{itemize}
If in addition we suppose $I$ invertible, then each of the above assertions is equivalent to:
\begin{itemize}
\item[$(iii)$] $I = I\ia + Rb$.
\end{itemize}
\end{lemma}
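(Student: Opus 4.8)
The plan is to dispatch the three assertions in turn, invoking the hypothesis $\glr_R(R/\ia) = 1$ only once. By the definition of the linear rank and the observation (Section \ref{SubSecRanks}) that $\glr_R$ may be computed with $\SL_k$ in place of $\GL_k$, the hypothesis is equivalent to the transitivity of $\SL_2(R)$ on $\V_2(R/\ia)$. Throughout I write $\ov{1},\ov{0}$ for the images of $1,0$ in $R/\ia$, so that the pair in the statement is $((\ov 1,a),(\ov 0,b))$.

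For the displayed equivalence, the direction ``$\Leftarrow$'' is immediate, since $(\ov 1,a)$ and $(\ov 0,b)$ are two generators. For ``$\Rightarrow$'', I would start from an arbitrary $\mb = ((x_1,y_1),(x_2,y_2)) \in \V_2(R/\ia \times I)$. Projecting to $R/\ia$ shows $(x_1,x_2) \in \V_2(R/\ia)$, so transitivity yields $\sigma \in \SL_2(R)$ with $(x_1,x_2)\sigma = (\ov 1,\ov 0)$. Since $\sigma$ has entries in $R$, right multiplication acts factorwise on the product $R/\ia \times I$, so $\mb\sigma = ((\ov 1,a),(\ov 0,b))$ for some $a,b \in I$, and $\mb\sigma$ still generates because $\sigma$ is invertible. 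Projecting $\mb\sigma$ to $I$ then shows $(a,b) \in \V_2(I)$, which is the required pair.

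For $(i)\Leftrightarrow(ii)$ the key step is to translate $(i)$ into an ideal identity. The submodule generated by $(\ov 1,a)$ and $(\ov 0,b)$ always surjects onto $R/\ia$, so by the exact sequence $0 \to \{0\}\times I \to R/\ia\times I \to R/\ia \to 0$ it generates if and only if it contains the kernel $\{0\}\times I$; computing the part of the submodule lying in that kernel gives $\{0\}\times(\ia a + Rb)$, whence $(i)\Leftrightarrow \ia a + Rb = I$. As $(a,b)\in\V_2(I)$ means $I = Ra + Rb$, this is equivalent to $a \in \ia a + Rb$, i.e.\ to $(1-c)a \in Rb$ for some $c \in \ia$; rewriting this as $1-c \in (b:a)$ gives $1 \in (b:a)+\ia$, and every step reverses, so $(i)\Leftrightarrow(ii)$.

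Assuming $I$ invertible, I would close the triangle by proving $(i)\Rightarrow(iii)\Rightarrow(ii)$. The first implication needs no invertibility: from $\ia a + Rb = I$ and $\ia a \subseteq I\ia$ we get $I \subseteq I\ia + Rb \subseteq I$. For $(iii)\Rightarrow(ii)$, multiply $I = I\ia + Rb$ by $I^{-1}$ and use $II^{-1}=R$ to obtain $R = \ia + bI^{-1}$; then note $bI^{-1}\subseteq(b:a)$, since for $x \in I^{-1}$ one has $xa \in R$ and hence $(bx)a = b(xa) \in Rb$. Thus $R = \ia + bI^{-1} \subseteq \ia + (b:a)$, which is $(ii)$. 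The main obstacle is precisely this last computation: one must manipulate the fractional ideal $I^{-1}\subseteq K(R)$ with care, checking that $I^{-1}(I\ia)=\ia$ and $I^{-1}(Rb)=bI^{-1}$ are legitimate (associativity and distributivity of fractional-ideal multiplication) and that $bI^{-1}\subseteq R$ because $b \in I$ forces $bI^{-1}\subseteq II^{-1}=R$. Everywhere else the reasoning is routine bookkeeping once the translation $(i)\Leftrightarrow \ia a + Rb = I$ is in hand.
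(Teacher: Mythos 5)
Your proposal is correct and follows essentially the same route as the paper: the first assertion via transitivity of $\SL_2(R)$ on $\V_2(R/\ia)$, the equivalence $(i)\Leftrightarrow(ii)$ via the observation that generation amounts to $\ia a + Rb = I$ (the paper phrases this element-wise with $(0,a)=\mu(1,a)+\nu(0,b)$), and the link to $(iii)$ by multiplying by $I^{-1}$ (the paper closes the triangle as $(ii)\Leftrightarrow(iii)$ using $(b:a)=(b:I)=bI^{-1}$ rather than your $(i)\Rightarrow(iii)\Rightarrow(ii)$, but the fractional-ideal computation is the same).
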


\begin{proof}

Assuming that $\mu(R/\ia \times I) \le 2$, we pick an arbitrary $\mb \in \V_2(R/\ia \times I)$.
 By hypothesis, we can find $\sigma \in \SL_2(R)$ such that $\Mat(\mb) \sigma = \begin{pmatrix}1 & 0 \\ a & b\end{pmatrix}$ for some $(a, b) \in \V_2(I)$, 
 which proves the first part of the statement. 

$(i) \Rightarrow (ii)$. By hypothesis, there is $(\lambda, \mu) \in R^2$ such that $\lambda (1, a) + \mu (0, b) = (0, a)$, which implies  $\lambda - 1 \in (b : a)$ and $\lambda \in \ia$.
$(ii) \Rightarrow (i)$.
By hypothesis, there is $(a, b) \in \V_2(I)$, $\lambda, \nu \in R$ and $\mu \in \ia$ such that $\lambda + \mu = 1$
 and $\lambda a = \nu b$. As $(0, a) = \mu(1, a) + \nu(0, b)$, we deduce that $(1, a)$ and 
$(0, b)$ generate $R/\ia \times I$.

To complete the proof, it suffices to show $(ii) \Leftrightarrow (iii)$ under the additional hypothesis.
Multiplying both sides of $(ii)$ by $I$, we obtain $I = I\ia + I (b : a)$. Clearly $(b : a) = (b : I)$. As $I$ is assumed to be invertible, 
we have $(b : I) = bI^{-1}$, hence $I = I\ia + Rb$. Conversely, multiplying both sides of $(iii)$ by $I^{-1}$ yields $R = \ia + (b: a)$.
\end{proof}

\begin{lemma} \label{LemCyclicPlusIdealFitt1}
Let $I, \ia$ be two ideals of $R$ with $I$ invertible and $\mu(I) \le 2$. Let $M \Doteq R/\ia \times I  \subseteq R/\ia \times R$. Then $\Fitt_1(M) =  \ia$.
\end{lemma}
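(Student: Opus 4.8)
The plan is to reduce to the split case $R/\ia \times R$ already treated in Lemma~\ref{LemRPlusCyclic} by localizing at each maximal ideal, exploiting that Fitting ideals commute with localization and that an ideal of $R$ is determined by its localizations. This sidesteps the question of whether $M$ is two-generated, which the hypotheses do not guarantee and which one would otherwise need in order to identify $\Fitt_1(M)$ with $\ann(\bigwedge^2 M)$.

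First I would fix a maximal ideal $\ip$ of $R$ and compute $\Fitt_1(M)_{\ip}$. Since Fitting ideals commute with base change, $\Fitt_1(M)_{\ip} = \Fitt_1(M_{\ip})$, and since localization commutes with finite direct products, $M_{\ip} \simeq \Rp/\ia\Rp \times I_{\ip}$. As $I$ is invertible, it is projective of constant rank $1$, so $I_{\ip}$ is free of rank $1$ over the local ring $\Rp$; hence $M_{\ip} \simeq \Rp/\ia\Rp \times \Rp$. This module is visibly two-generated, so Lemma~\ref{LemRPlusCyclic}, applied over $\Rp$, yields $\Fitt_1(M_{\ip}) = \ia\Rp$.

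Combining these identities gives $\Fitt_1(M)_{\ip} = \ia\Rp$ for every maximal ideal $\ip$ of $R$, that is, $\Fitt_1(M)$ and $\ia$ have the same extension to $\Rp$ at every $\ip$. Since two ideals of $R$ that localize identically at every maximal ideal coincide, I conclude that $\Fitt_1(M) = \ia$.

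There is no serious obstacle here; the only points requiring care are that $I_{\ip}$ is genuinely \emph{free} of rank $1$ (which rests on invertibility, not merely on $\mu(I) \le 2$) and that Lemma~\ref{LemRPlusCyclic} is legitimately applied over the localization $\Rp$. I note that one could instead argue globally \emph{when $M$ is two-generated}, using $\bigwedge^2 M \simeq I/\ia I$ (because $\bigwedge^2(R/\ia) = \bigwedge^2 I = 0$) together with the computation $\ann(I/\ia I) = \ia$ extracted from the proof of Lemma~\ref{LemCyclicModule}; the localization argument is preferable precisely because it does not presuppose $\mu(M) = 2$.
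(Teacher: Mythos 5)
Your proof is correct, but it takes a different route from the paper's. The paper argues globally: it takes presentations $F_1 \to R \to R/\ia \to 0$ and $F_2 \to R^2 \to I \to 0$, forms the block presentation $\varphi_1 \times \varphi_2$ of $M$ on three generators, and computes $\Fitt_1(M) = I_2(\varphi) = \ia\,\Fitt_1(I) + \Fitt_0(I)$ directly from the minors of the block matrix, then invokes $\Fitt_0(I) = \{0\}$ and $\Fitt_1(I) = R$ (which hold because the invertible ideal $I$ is projective of constant rank $1$). You instead localize at each maximal ideal $\ip$, use the same rank-$1$ local freeness of $I$ to identify $M_{\ip}$ with $\Rp/\ia\Rp \times \Rp$, quote Lemma \ref{LemRPlusCyclic} over $\Rp$, and glue via the fact that an ideal is determined by its localizations. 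Both arguments ultimately rest on the same input about invertible ideals; the paper's version is a short self-contained computation citing Eisenbud, while yours recycles Lemma \ref{LemRPlusCyclic} and avoids the direct-sum formula for Fitting ideals of a block presentation. Your observation that the hypothesis $\mu(I) \le 2$ plays no essential role is accurate: in the paper's proof it only serves to fix the free module $R^2$ in the presentation of $I$ (and hence the indexing of the minors), and your localization argument dispenses with it entirely. Your care in distinguishing $\Fitt_1(M)$ from $\ann(\bigwedge^2 M)$ when $\mu(M)$ might exceed $2$ is also well placed, since the lemma is later applied in situations (Proposition \ref{PropCyclicPlusIdeal}) where two-generation is established separately.
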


\begin{proof}
Let us consider two presentations 
\begin{align}
\notag
F_1 \xrightarrow[]{\varphi_1} R \rightarrow R/\ia \rightarrow 0 \\
  \notag
F_2 \xrightarrow[]{\varphi_2} R^2 \rightarrow I \rightarrow 0.
\end{align}
where $F_1$ and $F_2$ are free $R$-modules. This gives rise to a presentation 
$$
F \xrightarrow[]{\varphi} R^3 \rightarrow M \rightarrow 0. 
$$
with $F = F_1 \times F_2$ and $\varphi =\varphi_1 \times \varphi_2$.
By \cite[Corollary 20.4]{Eis95}, we have 
$\Fitt_1(M) = I_2(\varphi) =  I_1(\varphi_2)I_1(\varphi_1) + I_2(\varphi_2) = \Fitt_1(I)\ia +  \Fitt_0(I)$.
 By \cite[Propositions 20.5 and 20.6]{Eis95}, we have $\Fitt_0(I) = \{0\}$ and $\Fitt_1(I) = R$. 
Therefore $\Fitt_1(M) = \ia$.
\end{proof}

\begin{proposition} \label{PropCyclicPlusIdeal}
Let $\ia \neq 0$ be an ideal of $R$ such that $\glr_R(R/\ia) = 1$.
Let $I$ be an invertible ideal of $R$ and let $M \Doteq R/\ia \times I  \subseteq R/\ia \times R$. 
We suppose moreover that at least one of the following holds:
\begin{itemize}
\item[$(i)$] $I$ is $\half$-generated.
\item[$(ii)$] $\ia$ is regular and $I/Rr$ is cyclic for every regular element $r \in I$.
\end{itemize}

Then there is $(a, b) \in \V_2(I)$ with $a \in I\ia \setminus \{0\}$ such that $R/\ia \times I$ is generated by $\left(e_1 + a e_2,  be_2 \right)$ 
where $e_1 = (1, 0)$ and $e_2 = (0, 1)$.
If $\mb$ is a generating pair of this form and $\mb' \in \V_2(M)$, then
$$
\mb' \sim_{\SL_2(R)} (\Det_{\mb}(\mb') e_1 + ae_2, b e_2).
$$

In particular, the map $\det_{\mb}$ induces a bijection from $\V_2(M)/\SL_2(R)$ onto $(R/\ia)^{\times}$.
\end{proposition}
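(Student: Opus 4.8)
The plan is to exhibit the generating pair $\mb$, to attach to every unit $\delta \in (R/\ia)^{\times}$ a \emph{normal form} $\mb_\delta \in \V_2(M)$ with $\det_\mb(\mb_\delta) = \delta$, and then to reduce an arbitrary $\mb' \in \V_2(M)$ to $\mb_{\det_\mb(\mb')}$ by means of Proposition \ref{PropSL2AndFitt1}; the bijection will follow at once.

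First I would construct $(a,b)$. Since $\ia \neq 0$ and $I$, being invertible, contains a regular element $x$, the ideal $I\ia$ is nonzero. Under hypothesis $(i)$ I pick any $a \in I\ia \setminus \{0\}$; under hypothesis $(ii)$ I pick a regular $c \in \ia$ and set $a = xc$, a regular element of $I\ia$. In both cases $I/Ra$ is cyclic — by $\half$-generation in case $(i)$, by the standing assumption on $I/Rr$ in case $(ii)$ — so there is $b \in I$ with $I = Ra + Rb$, giving $(a,b) \in \V_2(I)$ with $a \in I\ia \setminus \{0\}$. As $a \in I\ia$ we get $I = Ra + Rb \subseteq I\ia + Rb \subseteq I$, hence $I = I\ia + Rb$; Lemma \ref{LemCyclicPlusIdeal}.$iii$ (whose hypothesis $\glr_R(R/\ia) = 1$ holds) then tells us that $\mb \Doteq (e_1 + ae_2, be_2)$ generates $M$, which is the first assertion. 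Along the way I record two facts used below: $I/I\ia$ is cyclic (generated by the image of $b$, since $a \in I\ia$), hence $I/I\ia \simeq R/\ia$ by Lemma \ref{LemCyclicModule}; and $\Fitt_1(M) = \ia$ by Lemma \ref{LemCyclicPlusIdealFitt1}. Moreover $M$ is faithful and $M_\ip \simeq R_\ip/\ia_\ip \times R_\ip$ at every maximal ideal $\ip$ because $I_\ip \simeq R_\ip$, so Proposition \ref{PropSL2AndFitt1} is available for $M$.

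The heart of the argument is the family of normal forms. For $\delta \in (R/\ia)^{\times}$ set $\mb_\delta \Doteq (\delta e_1 + a e_2,\, b e_2)$ and fix a lift $d \in R$ of $\delta$. I claim $\mb_\delta \in \V_2(M)$ and $\det_\mb(\mb_\delta) = \delta$. Let $N$ be the submodule generated by $\mb_\delta$. Its image under the first projection $M \to R/\ia$ is $R\delta = R/\ia$ since $\delta$ is a unit; and an element $\alpha(\delta e_1 + ae_2) + \beta\, be_2$ lies in $\{0\} \times I$ exactly when $\alpha d \in \ia$, i.e.\ $\alpha \in (\ia : d) = \ia$, so that $N \cap (\{0\} \times I) = \{0\} \times (\ia a + Rb)$. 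Here the condition $a \in I\ia$ is decisive: from $I\ia = \ia a + \ia b$ and $\ia b \subseteq Rb$ we obtain $a \in \ia a + Rb$, whence $\ia a + Rb = Ra + Rb = I$; thus $\{0\} \times I \subseteq N$ and, combined with the surjection onto $R/\ia$, $N = M$. For the determinant I use the map $\pi$ of Lemma \ref{LemDetPi}: by $\Fitt_1(M) = \ia$ and $I/I\ia \simeq R/\ia$ we have $M/\ia M \simeq R/\ia \times I/I\ia \simeq (R/\ia)^2$, and since $a \in I\ia$ kills the second coordinate of the first generator, one reads off that $\pi(\mb)$ is the canonical basis and $\pi(\mb_\delta) = \pi(\mb)\,\diag(\delta,1)$, so Lemma \ref{LemDetPi} yields $\det_\mb(\mb_\delta) = \delta$.

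Finally I assemble the statement. Given $\mb' \in \V_2(M)$, put $\delta = \det_\mb(\mb')$; by the previous paragraph $\mb_\delta \in \V_2(M)$ and $\det_\mb(\mb_\delta) = \delta$. Lemma \ref{LemDet}.$ii$ then gives $\det_{\mb_\delta}(\mb') = \det_\mb(\mb_\delta)^{-1}\det_\mb(\mb') = 1$, so Proposition \ref{PropSL2AndFitt1} yields $\mb' \sim_{\SL_2(R)} \mb_\delta$, which is exactly the announced normal form. Consequently $\det_\mb$ descends to a well-defined map $\V_2(M)/\SL_2(R) \to (R/\ia)^{\times}$ (its image lies in $(R/\ia)^{\times}$ because $\det_\mb(\V_2(M)) \subseteq (R/\Fitt_1(M))^{\times} = (R/\ia)^{\times}$); it is surjective since $\det_\mb(\mb_\delta) = \delta$ for every $\delta$, and injective because equal determinants force a common normal form $\mb_\delta$. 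The one genuinely delicate point is showing $\mb_\delta \in \V_2(M)$ for every unit $\delta$, that is the identity $\ia a + Rb = I$: this is precisely where the membership $a \in I\ia$ is used in an essential way, while everything else reduces to bookkeeping or to direct appeals to Proposition \ref{PropSL2AndFitt1} and Lemmas \ref{LemDet} and \ref{LemDetPi}.
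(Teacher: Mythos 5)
Your proof is correct and follows essentially the same route as the paper: construct $(a,b)$ with $a \in I\ia\setminus\{0\}$, invoke Lemma \ref{LemCyclicPlusIdeal} for the generating pair, compute $\det_{\mb}$ of the normal form $(\delta e_1 + ae_2, be_2)$ via Lemma \ref{LemDetPi} and $\Fitt_1(M)=\ia$, and conclude with Proposition \ref{PropSL2AndFitt1}. The only difference is that you spell out why $(\delta e_1 + ae_2, be_2)$ generates $M$ (the identity $\ia a + Rb = I$), a step the paper asserts without detail, and your determinant bookkeeping with $\delta = \det_{\mb}(\mb')$ is the correct reading of the paper's argument.
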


\begin{proof}
If $(i)$ holds, we can pick $a \in I\ia \setminus \{0\}$ such that $I = Ra + Rb$ for some $b \in I$. 
If $(ii)$ holds, we can find two regular elements $x \in \ia, y \in I$. Setting $a \Doteq xy$, we also obtain that 
 $I = Ra + Rb$ for some $b \in I$.
In particular, we have 
$I = I\ia + Rb$ so that $\mb \Doteq (e_1 + a e_2,  be_2)$ generates $M$ by Lemma \ref{LemCyclicPlusIdeal}.
Let $\mb' \in \V_2(M)$ and let $\delta  \Doteq \det_{\mb'}(\mb)$. As $\mb$ generates $M$, so does 
$\mb'' \Doteq (\delta e_1 + a e_2, b e_2)$. We claim that 
$\det_{\mb}(\mb'') = \delta$.
The equivalence
$
\mb' \sim_{\SL_2(R)}  \mb''
$ follows from our claim.  Indeed, we have $M_{\ip} \simeq R_{\ip}/ \ia R_{\ip} \times  R_{\ip}$ for every maximal ideal $\ip$
 of $R$ because $I$ is invertible, so that Proposition \ref{PropSL2AndFitt1} applies.
In order to establish our claim, we consider the natural map $\pi: M \twoheadrightarrow M/\Fitt_1(M)M$. Since 
$a \in \ia = \Fitt_1(M)$ by Lemma \ref{LemCyclicPlusIdealFitt1}, we have 
$\pi(\mb'') = \pi(\mb) \begin{pmatrix} \delta & 0 \\ 0 & 1 \end{pmatrix}$ and  Lemma \ref{LemDetPi} yields $\det_{\mb}(\mb'') = \delta$. This concludes the proof.
\end{proof}

\begin{proposition} \label{PropCyclicPlusIdealBeyondMu}
Let $I, \ia$ and $M$ be as in Proposition \ref{PropCyclicPlusIdeal}. Assume moreover that $\er_R(R/\ia) = 1$ and that at least one of the following holds:
\begin{itemize}
\item[$(i)$] $I$ is $\half$-generated and $\er_R(R/\ib) = 1$ for every non-zero $\ib \subseteq I$.
\item[$(ii)$] $R$ is a PP-ring, $\ia$ is regular, $I/\ib$ is cyclic and  $\er_R(R/\ib) = 1$ for every regular ideal $\ib \subseteq I$.
\end{itemize}
Then $\er(M) = 2$.
\end{proposition}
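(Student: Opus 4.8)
The goal is to show $\er(M)=2$ for $M=R/\ia\times I$ under the stated hypotheses, i.e., that $\E_n(R)$ acts transitively on $\V_n(M)$ for every $n>2$ (noting $\er(M)\ge\mu(M)=2$ is automatic). The plan is to proceed by reducing an arbitrary generating $n$-vector to a fixed normal form via elementary operations, combining the stable-rank bound with the two-generator structure already analyzed in Proposition~\ref{PropCyclicPlusIdeal}.

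First I would verify that $\sr(M)\le 2$, so that it suffices to treat the case $n=3$. Indeed, under hypothesis $(i)$ the ideal $I$ is $\half$-generated, so Lemma~\ref{LemAlmostStableRankOne} (together with the hypothesis that proper quotients have stable rank $1$, which is implicit in $\er_R(R/\ib)=1$) gives $\sr(I)\le 2$; under hypothesis $(ii)$, Proposition~\ref{PropTransitivityPP} gives $\sr(I)\le 2$. Since $M=R/\ia\times I$ with $R/\ia$ cyclic, Proposition~\ref{PropRanks}.$ii$ bounds $\sr(M)\le \mu(M)+\sr(R)-1$; more directly, because $\er_R(R/\ia)=1$ one expects $\sr(R/\ia)=1$, and combining the factors yields $\sr(M)\le 2$. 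Once $\sr(M)\le 2$, any $\mb\in\V_n(M)$ with $n>3$ can be shortened by elementary transformations to a vector supported on three coordinates, so the whole problem collapses to showing $\E_3(R)$ acts transitively on $\V_3(M)$.

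The heart of the argument is the case $n=3$. Here I would take the reference point to be the generating pair of Proposition~\ref{PropCyclicPlusIdeal}, namely $\mb_0=(e_1+ae_2,\,be_2)$ with $a\in I\ia\setminus\{0\}$ and $I=Ra+Rb$, padded to length three as $(e_1+ae_2,\,be_2,\,0)$. Given an arbitrary $\mb\in\V_3(M)$, the strategy is to use the $\sr(I)\le 2$ property and the elementary-rank hypotheses on the quotients $R/\ib$ to first maneuver $\mb$ into a form where one coordinate is a generator of the $R/\ia$ summand. The key reduction step mirrors the proof of Proposition~\ref{PropOneAndAHalf}: using that $\er_R(R/\ib)=1$ for the relevant non-zero (resp.\ regular) ideal $\ib\subseteq I$, I would trade the third coordinate against the first two to reach $(e_1+ae_2,\,b'e_2,\,0)$ for a suitable $b'$ with $I=Ra+Rb'$, and then apply the $\SL_2$-normal form from Proposition~\ref{PropCyclicPlusIdeal} — upgraded to $\E_3$ because the extra free coordinate lets us realize the required $\SL_2$-element by elementary matrices (a determinant-$1$ matrix in the corner becomes a product of elementaries once embedded in $\E_3$ with a padding column, via the standard Whitehead/Suslin trick). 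Finally, Lemma~\ref{LemReduction} handles the passage from a generator of the cyclic summand back to $e_1$, eliminating the $a e_2$ contribution.

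The main obstacle I anticipate is the bookkeeping in the $n=3$ reduction: one must ensure that the intermediate elementary operations remain valid over $R$ (not merely over the quotients $R/\ib$) and that the ideals $\ib$ arising as $(x:I)$ for the coordinates $x$ actually satisfy the nonzero/regular hypotheses guaranteeing $\er_R(R/\ib)=1$. Under hypothesis $(ii)$ this requires knowing that coordinates can be taken regular, which is where the PP-ring assumption enters (as in Proposition~\ref{PropTransitivityPP}, additive regularity lets one adjust a coordinate to a regular element modulo the others). Under hypothesis $(i)$ the $\half$-generated property must be invoked to guarantee that $I/Rx$ is cyclic for the chosen $x$, feeding the inductive reduction. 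Keeping these two cases parallel while invoking the correct transitivity input at each step is the delicate part; everything else is the routine $\E_n$-manipulation already exemplified in the proofs of Propositions~\ref{PropOneAndAHalf} and~\ref{PropTransitivityPP}.
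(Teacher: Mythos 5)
Your overall plan---reduce an arbitrary generating vector to a normal form $(e_1+ae_2,\,be_2,\,0,\dots,0)$ using the elementary-rank hypotheses on quotients of $I$---matches the first half of the paper's proof, but the step you use to finish contains a genuine gap. You claim that the $\SL_2(R)$-element furnished by Proposition \ref{PropCyclicPlusIdeal} ``becomes a product of elementaries once embedded in $\E_3$ with a padding column, via the standard Whitehead/Suslin trick.'' This is false: the Whitehead lemma places $\diag(A,A^{-1})$ in the elementary subgroup, not $\diag(A,1)$, and when $\sr(R)\le 2$ the stability theorem identifies $\SL_3(R)/\E_3(R)$ with $\SK_1(R)$, so $\diag(A,1)\in\E_3(R)$ exactly when the class of $A$ in $\SK_1(R)$ vanishes. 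If your padding argument were correct it would show that $\E_3(R)$ acts transitively on $\V_3(R^2)$ for every Hermite ring of stable rank at most $2$, contradicting the identification (\ref{EqSK1}) of Proposition \ref{PropSK1Rl} and the appearance of $\SK_1(R)$ in Theorem \ref{ThSK1}.$(ii)$ for modules of the form $R/\ia\times R^l$. The paper never passes through the $\SL_2$-classification: it first reaches the normal form with the extra constraint $a\in I\ia\setminus\{0\}$ (a constraint you omit, and which is exactly what lets Lemma \ref{LemCyclicPlusIdeal} certify that $(e_1+ae_2,be_2)$ generates $M$), and then connects two normal forms $(e_1+ae_2,be_2,0,\dots)$ and $(e_1+a'e_2,b'e_2,0,\dots)$ by explicit column operations that insert $a-a'$ into the spare zero column and invoke $\er_R(I/Ra)=1$. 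This is precisely where the hypotheses $\ia\neq 0$ (resp.\ regular) and the structure of $I$ defeat the $\SK_1$ obstruction; no generic padding argument can replace that step.

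A secondary issue: your reduction to $n=3$ rests on $\sr(M)\le 2$, which you derive from ``$\er_R(R/\ia)=1$ so one expects $\sr(R/\ia)=1$.'' The inequality between these ranks goes the other way (for a cyclic module $\er\le\sr$ by Proposition \ref{PropERank}.$(iii)$), so this is unjustified; nor do the hypotheses literally say that every proper quotient of $R$ has stable rank $1$. The reduction is also unnecessary: the paper's argument treats all $n>2$ uniformly, using $\er_R(R/\ia)=1$ to clear the first row and $\er_R(I/Ra)=1$ to clear the trailing entries of the second row.
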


\begin{proof}
We assume first that $(i)$ holds and let $n > 2, \mb \in \V_n(M)$. We begin by proving that
 $\mb \sim_{\E_n(R)} (e_1 + a e_2, b e_2, 0, \dots, 0)$ for some $(a, b) \in \V_2(I)$ with $a \in I \ia \setminus \{0\}$.
Since $\er_R(R/\ia) = 1$, we deduce that 
\begin{equation} \label{EqABC} \Mat(\mb E)   =
\begin{pmatrix}
1 & 0  & \zerob \\
a & b  & \cb
\end{pmatrix}
\end{equation}
for some $E \in \SL_2(R)$, some $(a, b) \in I^2$ with $a \neq 0$ and some $\cb = (c_1, \dots, c_{n - 2})\in I^{n - 2}$.
Since $\mb E$ generates $M$, there is a row vector $\rb = (r, s, \tb) \in R^n$ with $\tb \in R^{n - 2}$ such that 
$\Mat(\mb E) \rb^{\top}= 
\begin{pmatrix}
0  \\ a
\end{pmatrix}
$. As a result, we can write $a = a r + b s +  \cb \tb^{\top}$ with $r  \in  \ia$.
 Multiplying $\mb E$ on the right by an elementary matrix if needed, we can therefore assume that $a \in I \ia$. 
Since $I$ is $1\sfrac{1}{2}$-generated, we have $\mu(I/Ra) = 1$ and hence $\er_R(I/Ra) = 1$,
 so that $(b + Ra, c_1 + Ra, \dots, c_{n - 2} + Ra)  \sim_{\E_{n - 1}(R)} (d + Ra, 0, \dots, 0)$ for some $d \in I$.
Thus we can assume, without loss of generality, that $c_i \in Ra$ for every $i \ge 1$ and $I = I \ia + Rb$. 
This shows in particular that $(a, b) \in \V_2(I)$ and we infer from Lemma  \ref{LemCyclicPlusIdeal} that $(e_1 + ae_2, be_2)$ generates $M$. 
It follows immediately that $\mb \sim_{\E_n(R)} (e_1 + a e_2, b e_2, 0, \dots, 0)$, as claimed.

Let $\mb' =  (e_1 + a' e_2, b' e_2, 0, \dots, 0)$ with $(a', b') \in \V_2(I)$ and $a' \in I \ia \setminus \{0\}$. 
We shall conclude our proof by showing that $\mb' \sim_{\E_n(R)} \mb$. We begin by observing that 
 $$
 \Mat(\mb')   \sim_{\E_n(R)}
\begin{pmatrix}
1 & 0  & 0 &  \zerob \\
a' & b' & a - a' & \zerob
\end{pmatrix}
  \sim_{\E_n(R)}
\begin{pmatrix}
1 & 0  & 0 & \zerob\\
a & b' & a - a' & \zerob 
\end{pmatrix}.
 $$
 Using again the fact that $\er_R(I/Ra) = 1$, we deduce that $(b', a - a') \sim_{\E_2(R)} (b + \lambda a, \mu a)$ 
 for some $(\lambda, \mu) \in R^2$. Since $(e_1 + ae_2, (b + \lambda a)e_2)$ generates $M$ by Lemma  \ref{LemCyclicPlusIdeal}, we have 
\begin{align} \notag
 \Mat(\mb')   \sim_{\E_n(R)}  
\begin{pmatrix}
1 & 0  & 0 & \zerob\\
a & b + \lambda a & \mu a & \zerob 
\end{pmatrix}
 \sim_{\E_n(R)}  
\begin{pmatrix}
1 & 0  & 0 & \zerob \\
a & b + \lambda a & a & \zerob
\end{pmatrix}
\end{align}
from which  $\mb' \sim_{\E_n(R)} \mb$ easily follows.

Let us assume now that $(ii)$ holds. Proceeding as before, we only need to show that given the identity (\ref{EqABC}) with $a \in I \ia$, we can further suppose that $a$ is regular. 
Since $R$ is a PP-ring, there is an idempotent $e \in R$ such that $Re = (0:a)$. Let $r$ be a regular element in $I \ia$.
Then there are $\lambda \in R, \boldsymbol{\mu} \in R^{n - 2}$ such that $re = \lambda be + \boldsymbol{\mu} \cb^{\top} e$. 
As $a + re$ is regular, we can therefore assume, without loss of generality, that so is $a$. 
From there on, the proof is the same.
\end{proof}

\subsection{Products of more than two factors} \label{SecMoreThanTwo}

In this section, we consider $R$-modules $M$ of one of the two following forms 

\begin{align}
\label{EqEDR}
R/\ia_1 \times \cdots \times R/\ia_k \, (k \ge 1),\\
\label{EqSemiHereditary}
R/I_1 \times \cdots \times R/I_k \times I \times R^l  \,(k \ge 0, l \ge 0)
\end{align} 
where $R \supsetneq  \ia_1 \supseteq \cdots \supseteq \ia_k$ is a descending chain of ideals, $R \supsetneq  I_1 \supseteq \cdots \supseteq I_k$ is a descending chain of invertible ideals,
$I$ is either $\{0\}$ or an invertible ideal and $l > 0$ if  $I = \{0\}$. We show that the conclusions of Theorems \ref{ThSL} and \ref{ThSK1} hold for such modules with less restrictive assumptions on $R$. 
This is achieved with Proposition \ref{PropTorsionFree} ($M$ as in (\ref{EqSemiHereditary}), torsion free but not free), 
Propositions \ref{PropInvariantDecomposition} and  \ref{PropInvariantDecompositionKPlusD} ($M$ as in (\ref{EqEDR})), 
Proposition \ref{PropWithIAndSomeTorsion} ($M$ as in (\ref{EqSemiHereditary}) and $k \ge 1$) 
and Proposition \ref{PropSK1Rl} ($M$ is free) below. 

We denote by $\SL_k(R) \cap \E_n(R) \,(k \le n)$ the subgroup of $\SL_k(R)$ consisting of the matrices $\sigma$ such that 
$
\begin{pmatrix}
\sigma & \zerob \\
\zerob & 1_{n - k} 
\end{pmatrix} \in \E_n(R)
$.

\begin{proposition} \label{PropTorsionFree}
Assume that $R$ is Hermite. Let $I$ be an ideal of $R$ such that $\mu(I) = 2$. 
Let $M = I \times R^k \subset R^{k + 1} \,(k \ge 1)$.
Then  the following hold. 
\begin{itemize}
\item[$(i)$] $\mu(M) = k + 2$ and $\Fitt_{k + 1}(M) = \Fitt_1(I)$.
\item[$(ii)$] The map $(a, b) \mapsto (ae_1, be_1, e_2, \dots, e_{k + 1})$ induces a bijection from \\ $\V_2(I)/\SL_2(R)$ onto $\V_{k + 2}(M) / G$ with $G = \SL_{k + 2}(R)$.
If $\er(R) \le 2$, we can take $G =  \E_{\mu}(R)$ and the above map also induces a bijection from 
$\V_2(I)/(\SL_2(R) \cap \E_{k + 2}(R))$ onto $\V_{k + 2}(M) / \E_{k + 2}(R)$.
\item[$(iii)$] For every $n > k + 2$, the map 
$$(a_1, \dots, a_{n - k}) \mapsto (a_1e_1, \dots, a_{n - k}e_{n - k}, e_{n - k + 1}, \dots,  e_n)$$
 induces a bijection from  $\V_{n - k}(I)/(\SL_{n - k}(R) \cap G)$ onto $\V_n(M) / G$ where $G = \SL_n(R)$. 
If $\er(R) \le 2$, we can take $G = \E_n(R)$.
\item[$(iv)$] If $R$ and $I$ are as in Proposition \ref{PropOneAndAHalf} or as in Proposition \ref{PropTransitivityPP}, then $\er(M) = k + 2$.
\end{itemize}
\end{proposition}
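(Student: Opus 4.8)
The plan is to establish the four assertions in order, since each builds on the previous ones. For (i), I would compute $\mu(M)$ and the Fitting ideal directly from a presentation. Writing $M = I \times R^k$, a presentation of $M$ is obtained by combining a two-generator presentation $F \xrightarrow{\varphi_2} R^2 \to I \to 0$ of $I$ with the trivial presentation of $R^k$. Exactly as in Lemma \ref{LemCyclicPlusIdealFitt1}, the Fitting-ideal computation via \cite[Corollary 20.4]{Eis95} gives $\Fitt_{k+1}(M) = \Fitt_1(I)\cdot R^{\otimes k} = \Fitt_1(I)$, using $\Fitt_0(I) = \{0\}$ and $\Fitt_1(I) = R$ from \cite[Propositions 20.5 and 20.6]{Eis95}. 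That $\mu(M) = k + 2$ follows because $I$ needs two generators (so $M$ needs at least $k+2$) while the displayed map produces a generating vector of length $k+2$; I would confirm $\mu(I) = 2$ cannot drop because $\Fitt_1(I) = R$ yet $\Fitt_0(I) = 0$, i.e.\ $I$ is not cyclic.

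For (ii), the key observation is that $R$ being Hermite means $\SL_2(R)$ (equivalently $\GL_2$ up to units) can put any generating vector of $R^{k+1}$ into a standard form. Given $\mb' \in \V_{k+2}(M) \subseteq \V_{k+2}(R^{k+1})$, I would view it as a $(k+1) \times (k+2)$ matrix over $R$ and use the Hermite property to reduce the last $k$ rows (the free part $R^k$) to the standard form $(e_2, \dots, e_{k+1})$ via column operations in $\SL_{k+2}(R)$; this is essentially Lemma \ref{LemReduction} applied $k$ times to strip off each free cyclic factor. After this reduction, only the $I$-component survives in the first two columns, reducing the classification of $\V_{k+2}(M)/\SL_{k+2}(R)$ to that of $\V_2(I)/\SL_2(R)$. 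Well-definedness and injectivity of the induced map follow from Proposition \ref{PropSL2AndFitt1} together with $\Fitt_{k+1}(M) = \Fitt_1(I)$ from (i). For the elementary version under $\er(R) \le 2$, I would replace each application of Hermiteness/$\SL$ with the corresponding elementary reduction, noting that $\er(R) \le 2$ guarantees $\E_n(R)$ acts transitively on unimodular rows of length $> 2$, so the same stripping procedure goes through with $G = \E_{k+2}(R)$, and the stabilizer calculation yields the factor $\SL_2(R) \cap \E_{k+2}(R)$.

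For (iii), the module $M$ has $n - (k+2)$ ``extra'' generators beyond the minimum, and since $n > k+2 = \mu(M)$, I expect transitivity phenomena to kick in. The plan is to use the stable-range reduction (Proposition \ref{PropStableRankAndSurjectivity} and Proposition \ref{PropRanks}) to collapse the redundant generators: for $n > \mu(M)$ one can shorten generating vectors, and the displayed map $(a_1, \dots, a_{n-k}) \mapsto (a_1 e_1, \dots, a_{n-k}e_{n-k}, e_{n-k+1}, \dots, e_n)$ is the natural bookkeeping of which slots carry the $I$-data versus standard basis vectors of the free part. Again I would strip the $k$ trivial free factors by Lemma \ref{LemReduction}-style moves and reduce to classifying $\V_{n-k}(I)/(\SL_{n-k}(R) \cap G)$. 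The main technical point is tracking the stabilizer subgroup $\SL_{n-k}(R) \cap G$ correctly, i.e.\ which matrices in $\SL_{n-k}(R)$ extend to elements of $G = \SL_n(R)$ (or $\E_n(R)$) acting trivially on the free part; this is a matter of padding with an identity block.

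Finally, (iv) is a transitivity statement that I would deduce by combining (ii) and (iii) with the elementary-rank bounds on $I$ established earlier. Under the hypotheses of Proposition \ref{PropOneAndAHalf} or Proposition \ref{PropTransitivityPP} we have $\er(I) \le 2$, hence $\E_n(R)$ acts transitively on $\V_n(I)$ for all $n > 2$; feeding this into the bijections of (ii) and (iii) makes the target sets singletons for $n > k+2$, which together with $\mu(M) = k+2$ gives $\er(M) = k+2$. The main obstacle throughout will be (iii): carefully managing the interplay between the stable-range shortening (which needs $\sr(R) \le 2$, available since $R$ is Hermite and the hypotheses of (iv) force $\sr(R) \le 2$) and the precise description of the stabilizer $\SL_{n-k}(R) \cap \E_n(R)$, so that the induced map is genuinely a bijection rather than merely a surjection. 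I would handle this by checking that the elementary moves used to strip the free factors all lie in $\E_n(R)$ and that conjugating back and forth respects the $\SL_{n-k} \cap \E_n$ stabilizer, which is where the bulk of the routine but delicate verification sits.
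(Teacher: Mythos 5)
Your overall strategy for (ii)--(iv) --- strip the $k$ free factors one at a time by using the Hermite property to normalize the corresponding unimodular row and Lemma \ref{LemReduction} to clear the residual column, thereby reducing $\V_n(M)/G$ to generating vectors of $I$ modulo the appropriate stabilizer, and then feeding $\er(I)\le 2$ into (iii) to get (iv) --- is exactly the paper's inductive argument. Two steps, however, are not right as written. In (i) you assert $\mu(M)\ge k+2$ ``because $I$ needs two generators''; this is the nontrivial half of the claim and does not follow formally: the surjection $M\twoheadrightarrow I$ only gives $\mu(M)\ge 2$, and localizing gives only $\mu(M)\ge k+1$ when $I$ is locally principal. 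The paper extracts the lower bound from the stripping induction itself (a generating vector of length $\mu$ reduces to $(ae_1,be_1,e_2,\dots,e_{k+1})$ with $(a,b)\in\V_2(I)$, so $\mu-k\ge\mu(I)=2$); your own reduction in (ii) supplies this, but you need to say so. Relatedly, your Fitting computation imports $\Fitt_0(I)=\{0\}$ and $\Fitt_1(I)=R$ from Lemma \ref{LemCyclicPlusIdealFitt1}, where $I$ is invertible; here $I$ is only assumed two-generated. The product formula still yields $\Fitt_{k+1}(M)=\Fitt_0(I)+\Fitt_1(I)=\Fitt_1(I)$ since Fitting ideals form an ascending chain, or one can apply Lemma \ref{LemFittMuMinusOne} to the normal form as the paper does, so this is repairable.

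The more serious misstep is the injectivity argument in (ii): Proposition \ref{PropSL2AndFitt1} does not apply. Its hypotheses (a two-generated \emph{faithful} module that is locally of the form $R_{\ip}/\ia_{\ip}\times R_{\ip}$) are satisfied neither by $I$ nor by $M$ in the present generality, and its conclusion (detection of $\SL_2$-orbits by the determinant) is not the statement you need --- nothing in Proposition \ref{PropTorsionFree} claims that $\V_2(I)/\SL_2(R)$ is classified by a determinant; the whole point is to identify $\V_{k+2}(M)/G$ with $\V_2(I)$ modulo a stabilizer, whatever that quotient may be. The correct argument is elementary, as in the paper: if $(ae_1,be_1,e_2,\dots,e_{k+1})\sigma=(a'e_1,b'e_1,e_2,\dots,e_{k+1})$ with $\sigma\in\SL_{k+2}(R)$, then comparing the last $k$ rows of $\Mat(\cdot)$ forces $\sigma$ to be block upper triangular with lower-right block $1_k$, so its upper-left $2\times2$ block $\sigma'$ satisfies $\det\sigma'=1$ and $(a,b)\sigma'=(a',b')$. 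The same stabilizer computation is what justifies the groups $\SL_2(R)\cap\E_{k+2}(R)$ and $\SL_{n-k}(R)\cap G$ in (ii) and (iii); your appeal to stable-range shortening in (iii) is unnecessary, since (iii) is again just the stripping induction with longer vectors.
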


\begin{proof}
We begin by proving $(i)$ and $(ii)$ simultaneously under the unique assumption $\glr(R) = 1$.
Let $\mb \in \V_{\mu}(M)$. First, we shall prove by induction on $k \ge 0$ that $\mu = k + 2$ and that
$\mb \sim_{\SL_{\mu}(R)} (a e_1, be_1, e_2, \dots, e_{k + 1})$ for some $(a, b) \in \V_2(I)$ depending on $\mb$, which is obvious for $k = 0$.  
Suppose now that $k \ge 1$.  As $\glr(R) = 1$, we deduce that
\begin{equation} \label{EqMatReduction}
\Mat(\mb) \sim_{\SL_{\mu}(R)}
\begin{pmatrix}
A & \bb \\
\zerob & 1
\end{pmatrix}
\end{equation} with $A$ a $k \times (\mu - 1)$ matrix and $\bb$ a column vector of size $k$. 
It follows from Lemma \ref{LemReduction} that
$$
\begin{pmatrix}
A & \bb \\
\zerob & 1
\end{pmatrix}
 \sim_{\E_{\mu}(R)}
\begin{pmatrix}
A & \zerob \\
\zerob& 1
\end{pmatrix}.
$$

Our claim is then proven by applying the induction hypothesis to the generating vector $\mb' \in \V_{\mu - 1}(M')$ defined by $\Mat(\mb') = A$. 

The identity $\Fitt_{k + 1}(M) = \Fitt_1(I)$ follows from Lemma \ref{LemFittMuMinusOne} applied to $\mb = (ae_1, be_1, e_2, \dots, e_{k + 1})$.
We have already established that the map $$(a, b) \mapsto (a e_1, be_1, e_2, \dots, e_{k + 1})$$ induces 
a surjection from  $\V_2(I)/\SL_2(R)$ onto $\V_{k + 2}(M) / \SL_{k + 2}(R)$. 
In order to show that the induced map is also injective, 
let us consider $(a,b), (a',b') \in \V_2(I)$ and $\sigma \in \SL_{k + 2}(R)$ such that 
$$(a' e_1, b'e_1, e_2, e_3, \dots, e_{k + 1}) =  (a e_1, be_1, e_2, e_3, \dots, e_{k + 1}) \sigma.$$ 
Then $\sigma$ must be of the form 
$
\begin{pmatrix}
\sigma' & \zerob \\
\zerob & 1_{k - 1}
\end{pmatrix}
$
with $\sigma' \in \SL_2(R)$. Thus $(a', b') \sim_{\SL_2(R)} (a, b)$.

If in addition $\er(R) \le 2$, we show likewise that 
$\mb \sim_{\E_{k + 2}(R)} (ae_1, be_1, e_2, \dots, e_{k + 1})$ for some $(a, b) \in \V_2(I)$  by induction on $k$. 
Since $\er(R) \le 2 < k + 2$, we can use $\E_{k + 2}(R)$ to reduce $\Mat(\mb)$ in (\ref{EqMatReduction}). From there, the proof follows the same lines.

$(iii)$. Proceed by induction on $k$ as in the proof of assertion $(ii)$.

Assertion $(iv)$ follows from $(iii)$, since $\E_{n - k}(R)$ acts then transitively on $\V_{n - k}(I)$ by Propositions \ref{PropOneAndAHalf} and \ref{PropTransitivityPP}.
\end{proof}

\begin{lemma} \label{LemRankOfInvariantDecomposition}
Let $M = R/\ia_1 \times \cdots \times R/\ia_k \, (k \ge 1)$ where the ideals $\ia_i \subseteq R$
 form an descending chain $R \supsetneq  \ia_1 \supseteq \cdots \supseteq \ia_k$. 
Then $\mu(M) = k$ and $\Fitt_{k - 1}(M) = \ia_1$.
\end{lemma}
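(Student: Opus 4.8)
The plan is to exhibit the obvious generating vector of length $k$ and to read both invariants off it. Set $\mb = (e_1, \dots, e_k)$, where $e_i$ is the element of $M$ whose $i$-th component is $1 + \ia_i$ and whose remaining components vanish, following this section's conventions. This vector generates $M$, so immediately $\mu(M) \le k$.

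For the reverse inequality I would reduce modulo a maximal ideal. Since $\ia_1 \neq R$ by hypothesis, choose a maximal ideal $\im$ with $\ia_1 \subseteq \im$. The descending chain condition gives $\ia_i \subseteq \ia_1 \subseteq \im$ for every $i$, whence $(R/\ia_i)/\im(R/\ia_i) \simeq R/\im$ and therefore $M/\im M \simeq (R/\im)^k$. As any generating family of $M$ maps onto a generating family of the $k$-dimensional $R/\im$-vector space $M/\im M$, we get $\mu(M) \ge k$, and so $\mu(M) = k$.

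With $\mu(M) = k$ in hand, the generating vector $\mb$ has length exactly $\mu$, so I can apply Lemma \ref{LemFittMuMinusOne}: the ideal $\Fitt_{k-1}(M)$ is the set of elements of $R$ involved in a relation of $\mb$. A tuple $(r_1, \dots, r_k)$ is a relation of $\mb$, that is $\sum_i r_i e_i = 0$ in $M$, exactly when $r_i \in \ia_i$ for all $i$. Hence the values that can occur as an entry of a relation are precisely the elements of $\bigcup_{i} \ia_i$, since for $r \in \ia_j$ the tuple with $j$-th entry $r$ and all other entries $0$ is a valid relation. The descending chain hypothesis makes this union collapse to its largest member, so that $\Fitt_{k-1}(M) = \bigcup_{i=1}^{k} \ia_i = \ia_1$.

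I do not expect any genuine obstacle here: the statement is essentially bookkeeping resting on Lemma \ref{LemFittMuMinusOne} together with a reduction modulo a maximal ideal containing $\ia_1$. The only subtlety worth flagging is the collapse $\bigcup_i \ia_i = \ia_1$, which is exactly where one uses that the ideals form a descending chain rather than merely being $k$ in number.
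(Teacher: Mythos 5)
Your proof is correct and follows essentially the same route as the paper's: the paper also obtains $\mu(M) \ge k$ by reducing modulo a maximal ideal containing $\ia_1$ and then derives $\Fitt_{k-1}(M) = \ia_1$ by applying Lemma \ref{LemFittMuMinusOne} to $\mb = (e_1,\dots,e_k)$. Your write-up merely spells out the computation of the relations of $\mb$ and the collapse $\bigcup_i \ia_i = \ia_1$, which the paper leaves implicit.
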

\begin{proof}
Clearly $\mu(M) \le k$. Let $\ip$ be a maximal ideal of $R$ containing $\ia_1$. Then $M/\ip M$ surjects onto $(R / \ip)^k$ which is vector space of dimension $k$ over $R/\ip$. 
Therefore $\mu(M) \ge k$ and hence $\mu(M) = k$.
The identity $\Fitt_{k - 1}(M) = \ia_1$ follows from Lemma \ref{LemFittMuMinusOne} applied to $\mb = (e_1, \dots, e_k)$.
\end{proof}

\begin{proposition} \label{PropInvariantDecomposition}
Let $M$ be as in Lemma \ref{LemRankOfInvariantDecomposition}.
Let $G \in \{\SL_n(R), \E_n(R)\}$ with $n \ge k$ and let $\eb = (e_1, \dots, e_k)$. Assume that one of the following holds:
\begin{itemize}
\item[$(i)$]
$G = \SL_n(R)$ and $\glr_R(R/\ia_i) \le i$ for every $i \ge 1$.
\item[$(ii)$]
$G = \E_n(R)$ and $\er_R(R/\ia_i) \le \max(1, i - 1)$ for every $i \ge 1$.
\end{itemize}
 Then for every $\mb \in \V_n(M)$, we have:
$$\mb \sim_{G} \left\{ 
\begin{array}{cc}
(\det_{\eb}(\mb)e_1, e_2, \dots, e_k), & \text{ if } n = k, \\
(e_1, e_2, \dots, e_k, 0, \dots, 0), & \text{ if } n > k.
\end{array}
\right.$$ 
In particular $\det_{\eb}$ induces a bijection from $\V_k(M)/G$ onto $(R/\ia_1)^{\times}$.
\end{proposition}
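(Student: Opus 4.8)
The plan is to argue by induction on the number $k$ of factors, peeling off the last summand $R/\ia_k$ at each step by means of Lemma \ref{LemReduction}. The base case $k=1$ is immediate: here $M=R/\ia_1$, $\V_1(M)$ is the set of units of $R/\ia_1$, $\det_{\eb}$ is the identity on it, and $\SL_1(R)$ (resp. $\E_1(R)$) is trivial, while for $n>1$ transitivity is exactly the hypothesis $\glr_R(R/\ia_1)\le 1$ (resp. $\er_R(R/\ia_1)\le 1$). For the inductive step ($k\ge 2$) I would write $M=M'\times R/\ia_k$ with $M'=R/\ia_1\times\cdots\times R/\ia_{k-1}$, and observe that $\ia_k\subseteq\ia_{k-1}=\ann(M')$, so Lemma \ref{LemReduction} applies with the cyclic factor $R/\ia_k$. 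Given $\mb\in\V_n(M)$, the three tasks are: move a standard basis vector into the last coordinate of the $R/\ia_k$-component, invoke the induction hypothesis on $M'$ for the remaining $n-1$ coordinates, and finally reconcile the determinants.

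First I would reduce the $R/\ia_k$-component of $\mb$ (a generating vector of $R/\ia_k$ of length $n$) to $(0,\dots,0,1)$ by an element of $G$. This is possible as soon as $G$ acts transitively on $\V_n(R/\ia_k)$, i.e. whenever $n>\glr_R(R/\ia_k)$ for $G=\SL_n(R)$ or $n>\er_R(R/\ia_k)$ for $G=\E_n(R)$. The hypotheses furnish this in all cases but one: for $n>k$ we have $\glr_R(R/\ia_k)\le k<n$ (resp. $\er_R(R/\ia_k)\le k-1<n$), and for $n=k$ with $G=\E_n(R)$ we have $\er_R(R/\ia_k)\le\max(1,k-1)=k-1<k=n$. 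Once the last component is $(0,\dots,0,1)$, Lemma \ref{LemReduction} (applied inside $\E_n(R)\subseteq G$) makes the other $R/\ia_k$-components vanish, so that $\mb\sim_G((m'_1,0),\dots,(m'_{n-1},0),(0,1))$ with $\mb'=(m'_1,\dots,m'_{n-1})\in\V_{n-1}(M')$. Applying the induction hypothesis to $\mb'$, with $G_{n-1}$ embedded in $G_n$ as the stabiliser of the last coordinate, and then, when $n>k$, transporting $e_k$ from coordinate $n$ into coordinate $k$ by the two elementary moves $C_k\mapsto C_k+C_n$, $C_n\mapsto C_n-C_k$, I obtain $(e_1,\dots,e_k,0,\dots,0)$ when $n>k$ and $(\det_{\eb'}(\mb')e_1,e_2,\dots,e_k)$ when $n=k$. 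That the latter is the asserted normal form follows from the $G$-invariance of $\det_{\eb}$ (Lemma \ref{LemDet}$.i$, since $\det=1$ on $\SL_n$ and $\E_n$) together with Lemma \ref{LemDetPi} applied to $\diag(\det_{\eb'}(\mb'),1,\dots,1)$, which yields $\det_{\eb'}(\mb')=\det_{\eb}(\mb)$ in $(R/\ia_1)^{\times}$; here both $\Fitt_{k-2}(M')$ and $\Fitt_{k-1}(M)$ equal $\ia_1$ by Lemma \ref{LemRankOfInvariantDecomposition}.

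The single remaining case, $n=k$ with $G=\SL_n(R)$, is the genuinely delicate one, and I expect it to be the main obstacle: the bound $\glr_R(R/\ia_k)\le k$ falls exactly one short of supplying transitivity on $\V_k(R/\ia_k)$, so the last summand cannot be peeled directly. Here I would exploit the strongest hypothesis $\glr_R(R/\ia_1)\le 1$. Since it makes $\SL_j(R)$ act transitively on $\V_j(R/\ia_1)$ (the unimodular rows over $R/\ia_1$) for all $j\ge 2$, and since the image of $\SL_j(R)$ in $\SL_j(R/\ia_1)$ contains $\E_j(R/\ia_1)$, hence the stabiliser of $e_1$ modulo its $\SL_{j-1}$-block, a short induction on $j$ shows that $\SL_j(R)\to\SL_j(R/\ia_1)$ is surjective. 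As $\det_{\eb}(\mb)=\det(\Mat(\mb)\bmod\ia_1)$ with $\Mat(\mb)\bmod\ia_1\in\GL_k(R/\ia_1)$, I may then choose $\sigma\in\SL_k(R)$ with $\Mat(\mb\sigma)\equiv\diag(\delta,1,\dots,1)\pmod{\ia_1}$, where $\delta=\det_{\eb}(\mb)$; equivalently $\mb\sigma\equiv(\delta e_1,e_2,\dots,e_k)\pmod{\ia_1 M}$. The discrepancy between $\mb\sigma$ and the target $T=(\delta e_1,e_2,\dots,e_k)$ then lies entirely in $\ia_1 M$, which affects only the factors $R/\ia_2,\dots,R/\ia_k$, and the final step is to remove it by a relative reduction interlacing the induction with the remaining hypotheses $\glr_R(R/\ia_i)\le i$ $(i\ge 2)$. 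It is precisely this relative step, reconciling the off-by-one at the diagonal with the congruence data in $\ia_1 M$, that I anticipate to be the crux; its two-factor shadow is exactly the phenomenon isolated by Proposition \ref{PropSL2AndFitt1} and Lemma \ref{LemRPlusCyclic}.

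The stated bijection then follows formally from the normal form. The map $\det_{\eb}$ is $G$-invariant, hence constant on $G$-orbits; it is surjective onto $(R/\ia_1)^{\times}$ because $(\tilde\delta e_1,e_2,\dots,e_k)\in\V_k(M)$ for every lift $\tilde\delta\in R$ of a unit $\delta\in(R/\ia_1)^{\times}$; and two generating vectors sharing the same value of $\det_{\eb}$ are both $G$-equivalent to the same normal form and therefore to each other, which gives injectivity.
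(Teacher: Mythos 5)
Your strategy coincides with the paper's: induction on $k$, peeling the last cyclic factor $R/\ia_k$ by first bringing its row of $\Mat(\mb)$ to $(0,\dots,0,1)$ via transitivity of $G$ on $\V_n(R/\ia_k)$, clearing the remaining entries of that row with Lemma \ref{LemReduction}, applying the induction hypothesis to the resulting vector in $\V_{n-1}(R/\ia_1\times\cdots\times R/\ia_{k-1})$, and identifying the resulting unit with $\det_{\eb}(\mb)$ through $\Fitt_{k-1}(M)=\ia_1$. Your treatment of hypothesis $(ii)$, and of hypothesis $(i)$ when $n>k$, is complete and correct, as is the final derivation of the bijection from the normal form.

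The gap is the case $n=k$ with $G=\SL_k(R)$, which you explicitly leave open. You correctly observe that the literal bound $\glr_R(R/\ia_k)\le k$ is one short of giving transitivity of $\SL_k(R)$ on $\V_k(R/\ia_k)$, so the top-level peeling does not apply; but the substitute you sketch --- normalizing $\Mat(\mb)$ modulo $\ia_1$ and then removing a discrepancy lying in $\ia_1 M$ by a ``relative reduction'' --- is never carried out, and it is not clear it can be: $\ia_1 M$ is not contained in $\Jac(M)$ in general, and the two-factor results you invoke as a model (Lemma \ref{LemRPlusCyclic}, Proposition \ref{PropSL2AndFitt1}) require $M$ to be faithful and locally of the form $R_{\ip}/\ia_{\ip}\times R_{\ip}$, which fails here. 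The paper itself dispatches case $(i)$ with the phrase ``the proof can be trivially adapted,'' and the honest resolution is that in every situation where the proposition is applied (K-Hermite rings, or conditions $(a)$ and $(b)$ of Remark \ref{RemInvariantDecomposition}) one actually has $\glr_R(R/\ia_i)=1$ for every $i$: for $\ia_i\neq\{0\}$ because $\sr_R(R/\ia_i)=1$ there, and for $\ia_i=\{0\}$ because $\sr(R)\le 2$ forces $\glr(R)=1$. Under that effective bound the peeling of case $(ii)$ goes through verbatim at $n=k$ and there is no delicate case at all. To make your proof complete you should either establish the statement under the hypothesis $\glr_R(R/\ia_i)\le\max(1,i-1)$ (mirroring $(ii)$) and note that this is what all applications provide, or supply a genuine argument for the bound as stated; as written, the $\SL$-normal form at $n=k$ is not proved.
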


\begin{remark} \label{RemInvariantDecomposition}
The hypothesis $(i)$ of Proposition \ref{PropInvariantDecomposition} is clearly satisfied if $R$ is a K-Hermite ring. 
It is also satisfied if at least one of the following holds:
\begin{itemize}
\item[$(a)$] Every proper quotient of $R$ has stable rank $1$.
\item[$(b)$] The ring $R$ is as in Proposition \ref{PropTransitivityPP} and each ideal $\ia_i$ is either $\{0\}$ or regular.
\end{itemize}

Hypothesis $(ii)$ is satisfied if any of $(a)$ or $(b)$ holds and $\ia_2 \neq \{0\}$.
\end{remark}

\begin{sproof}{Proof of Proposition \ref{PropInvariantDecomposition}}
We shall proceed by induction on $k \ge 1$. If $k = 1$, the result holds trivially under any of the assumptions. 
We assume throughout that $k \ge 2$ and hypothesis $(ii)$ holds, in particular $G =  \E_n(R)$.
The proof can be trivially adapted to hypothesis $(i)$.

Consider $\mb \in \V_n(M)$ with $n \ge k$. 
Since  $n > \er_R(R/\ia_k)$, we have 
$$\Mat(\mb) \sim_{\E_n(R)}
\begin{pmatrix}
A & \bb \\
\zerob & 1
\end{pmatrix}
$$ with $A$ a $k \times (n - 1)$ matrix and $\bb$ a column vector of size $k$. 
By Lemma \ref{LemReduction}, we obtain
$$
\Mat(\mb) \sim_{\E_n(R)}
\begin{pmatrix}
A &\zerob \\
\zerob & 1
\end{pmatrix}
.$$
If $n = k$, we use the induction hypothesis for the generating vector $\mb' \in \V_{k - 1}(M')$ satisfying $A = \Mat(\mb')$ 
to infer that $\mb \sim_{\E_k(R)} (\delta e_1, e_2, \dots, e_k)$ for some $\delta \in (R/\ia_1)^{\times}$.
It follows from Lemmas \ref{LemFittMuMinusOne} and \ref{LemRankOfInvariantDecomposition} that $\delta = \det_{\eb}(\mb)$.
If $n > k$, we infer that $\mb \sim_{\E_n(R)} (e_1, \dots, e_{k - 1}, 0, \dots, 0, e_k)$ and 
using one further elementary matrix, we obtain $\mb \sim_{\E_n(R)} (e_1, e_2, \dots, e_k, 0, \dots, 0)$.
\end{sproof}

\begin{proposition} \label{PropInvariantDecompositionKPlusD}
Let $M$ be as in Lemma \ref{LemRankOfInvariantDecomposition}. Let $d \ge 0$ and $n > k + d$.
Assume that $\er_R(R/\ia_i) \le i + d$ for every $i \ge 1$.
Then for every $\mb \in \V_n(M)$, we have $\mb \sim_{\E_n} (e_1, e_2, \dots, e_k, 0, \dots, 0)$.
\end{proposition}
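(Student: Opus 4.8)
The plan is to argue by induction on $k$, mirroring the $n > k$ case of Proposition \ref{PropInvariantDecomposition}$(ii)$, the only new ingredient being the bookkeeping of the parameter $d$. For the base case $k = 1$ we have $M = R/\ia_1$ with $\mu(M) = 1$; since $n > 1 + d \ge \er_R(R/\ia_1)$, the group $\E_n(R)$ acts transitively on $\V_n(R/\ia_1)$ by the definition of the elementary rank, so every $\mb \in \V_n(M)$ satisfies $\mb \sim_{\E_n(R)} (e_1, 0, \dots, 0)$, as required.

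For the inductive step I assume $k \ge 2$ and write $M = M' \times R/\ia_k$ with $M' = R/\ia_1 \times \cdots \times R/\ia_{k-1}$. Fix $\mb \in \V_n(M)$ and consider the $k \times n$ matrix $\Mat(\mb)$, whose last row is the image of $\mb$ under the projection $M \twoheadrightarrow R/\ia_k$ and hence lies in $\V_n(R/\ia_k)$. Because $n > k + d \ge \er_R(R/\ia_k)$, the group $\E_n(R)$ acts transitively on $\V_n(R/\ia_k)$, so after right multiplication by a suitable element of $\E_n(R)$ I may assume
$$\Mat(\mb) = \begin{pmatrix} A & \bb \\ \zerob & 1 \end{pmatrix},$$
with $A$ a $(k-1) \times (n-1)$ matrix and $\bb$ a column of size $k-1$. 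By the descending chain hypothesis $\ia_k \subseteq \ann(M') = \ia_{k-1}$, so Lemma \ref{LemReduction}, applied with $M'$ in place of $M$ and $\mathfrak{b} = \ia_k$, lets me clear $\bb$ and assume
$$\Mat(\mb) = \begin{pmatrix} A & \zerob \\ \zerob & 1 \end{pmatrix}.$$
The columns of $A$ then define a generating vector $\mb' \in \V_{n-1}(M')$.

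The key point is that the hypotheses propagate verbatim: $M'$ has $k - 1$ factors, the inequality $n - 1 > (k - 1) + d$ is literally the hypothesis $n > k + d$, and the bound $\er_R(R/\ia_i) \le i + d$ holds for $1 \le i \le k - 1$ a fortiori. The induction hypothesis thus gives $\mb' \sim_{\E_{n-1}(R)} (e_1, \dots, e_{k-1}, 0, \dots, 0)$. Lifting this equivalence into $\E_n(R)$ via the block embedding fixing the last coordinate, and reinstating the last column $(0,1) = e_k$, I obtain $\mb \sim_{\E_n(R)} (e_1, \dots, e_{k-1}, 0, \dots, 0, e_k)$; since $n > k$ the position $k$ is vacant, so a further elementary transformation transporting $e_k$ from position $n$ to position $k$ yields $\mb \sim_{\E_n(R)} (e_1, \dots, e_k, 0, \dots, 0)$. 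The only genuine obstacle is the parameter accounting, and it is benign: one must check that the descent $(k,n) \mapsto (k-1,n-1)$ leaves the governing inequality $n > k + d$ invariant and that $\er_R(R/\ia_k) \le k + d < n$, so that the last row can be reduced. Both hold by hypothesis, and—unlike Proposition \ref{PropInvariantDecomposition}—there is no boundary case $n = k$ to treat separately, since $n > k + d \ge k$ throughout; in particular the determinant $\det_{\eb}$ never enters.
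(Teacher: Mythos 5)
Your proof is correct and follows essentially the same route as the paper, which simply says to proceed by induction on $k$ as in the proof of Proposition \ref{PropInvariantDecomposition}: reduce the last row using $n > k+d \ge \er_R(R/\ia_k)$, clear the remaining column via Lemma \ref{LemReduction} (valid since $\ia_k \subseteq \ann(M')$), apply the induction hypothesis to the $(k-1)$-factor module with $n-1 > (k-1)+d$, and transport $e_k$ into position $k$. Your parameter bookkeeping and the observation that the boundary case $n = k$ never occurs are exactly the points the paper leaves implicit.
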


\begin{proof}
Proceed by induction on $k \ge 1$ as in the proof of Proposition \ref{PropInvariantDecomposition}.
\end{proof}

\begin{lemma} \label{LemReductionK}
Let $M$ be a finitely generated $R$-module and let $\ia_1, \dots, \ia_k$ be ideals of $R$ such that 
$\ia_1 + \cdots + \ia_k \subseteq \ann(M)$. Let $\mb \in \V_n(M \times R/\ia_1 \times \cdots \times R/\ia_k)$ 
with $n \ge \mu(M \times R/\ia_1 \times \cdots \times R/\ia_k)$ be such that 
$$
\Mat(\mb) = \begin{pmatrix}
\mb_0 & \mb_1 \\
\zerob & 1_k
\end{pmatrix}
$$
where $\mb_0 \in M^{n - k}$ and $\mb_1 \in M^k$.
Then every component of $\mb_1$ is an $R$-linear combination of the components of $\mb_0$. In particular, we have 
$$
\Mat(\mb) \sim_{\E_n(R)} \begin{pmatrix}
\mb_0 & \zerob \\
\zerob & 1_k
\end{pmatrix}
.$$
\end{lemma}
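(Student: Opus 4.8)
The plan is to mimic the proof of Lemma~\ref{LemReduction} directly, handling all $k$ cyclic factors at once, rather than to induct on $k$ by peeling off one factor at a time. Induction is awkward here because the ideals $\ia_1, \dots, \ia_k$ are not assumed to form a chain: to strip off $R/\ia_k$ and apply Lemma~\ref{LemReduction} to the product $M \times R/\ia_1 \times \cdots \times R/\ia_{k-1}$ one would need $\ia_k$ to annihilate that product, i.e. $\ia_k \subseteq \ia_i$ for $i < k$, which the hypothesis $\ia_1 + \cdots + \ia_k \subseteq \ann(M)$ does not supply. A direct argument sidesteps this entirely.

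First I would establish the linear-combination claim, which is the crux. Label the columns of $\Mat(\mb)$ as $m_1, \dots, m_n$, so that $m_j = ((\mb_0)_j, 0, \dots, 0)$ for $1 \le j \le n-k$ and $m_{n-k+i} = ((\mb_1)_i, \eb_i)$ for $1 \le i \le k$, where $\eb_i$ is the element of $R/\ia_1 \times \cdots \times R/\ia_k$ with $1$ in the $i$-th slot and $0$ elsewhere. Fix $i$. Since $\mb$ generates the module, the element $((\mb_1)_i, 0, \dots, 0)$ can be written as $\mb \rb^{\top} = \sum_{j=1}^n r_j m_j$ for some $\rb = (r_1, \dots, r_n) \in R^n$. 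Comparing the $R/\ia_l$-components on both sides, and noting that only $m_{n-k+l}$ contributes to the $l$-th cyclic slot, forces $r_{n-k+l} \in \ia_l$ for every $l$. By hypothesis $\ia_l \subseteq \ann(M)$, so each $r_{n-k+l}$ kills $M$ and the cross-terms $r_{n-k+l}(\mb_1)_l$ vanish in the $M$-component. Comparing $M$-components then yields $(\mb_1)_i = \sum_{j=1}^{n-k} r_j (\mb_0)_j$, exhibiting every component of $\mb_1$ as an $R$-linear combination of the components of $\mb_0$.

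The $\E_n(R)$-equivalence is then a routine column clearing. Writing $(\mb_1)_i = \sum_{j=1}^{n-k} s_{ij} (\mb_0)_j$, I would subtract $s_{ij}$ times column $j$ from column $n-k+i$ for each $j \le n-k$. Because columns $1, \dots, n-k$ have vanishing cyclic components, these operations fix the block $\mb_0$ and the identity block $1_k$ while turning the $M$-entry of column $n-k+i$ into $(\mb_1)_i - \sum_j s_{ij}(\mb_0)_j = 0$; each is right multiplication by an elementary matrix in $\E_n(R)$ (legitimate since $j \le n-k < n-k+i$), and operations targeting distinct columns do not interfere. This gives $\Mat(\mb) \sim_{\E_n(R)} \begin{pmatrix} \mb_0 & \zerob \\ \zerob & 1_k \end{pmatrix}$. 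The single delicate point is the extraction $r_{n-k+l} \in \ia_l$ from the vanishing cyclic slots together with the annihilation $\ia_l \subseteq \ann(M)$ that kills the cross-terms; I expect no genuine obstacle beyond this, the remainder being the bookkeeping already present in Lemma~\ref{LemReduction}.
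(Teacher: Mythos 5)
Your proposal is correct and follows essentially the same route as the paper: express the target $((\mb_1)_i, 0, \dots, 0)$ as $\Mat(\mb)\rb^{\top}$, read off $r_{n-k+l} \in \ia_l$ from the vanishing cyclic slots, use $\ia_l \subseteq \ann(M)$ to kill the cross-terms, and conclude with elementary column operations. The paper's proof is exactly this direct argument (no induction on $k$), merely stated more tersely.
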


\begin{proof}
Let us write $\mb_0 =  (m_1, \dots, m_{n - k})$, $\mb_1 = (m_{n - k + 1}, \dots, m_n)$ and fix a component $m$ of $\mb_1$. 
Since $\mb$ generates $M \times R/\ia_1 \times \cdots \times R/\ia_k$, 
there is $\rb = (r_1, \dots, r_n) \in R^n$ such that $\Mat(\mb) \rb^{\top} = 
\begin{pmatrix}
m \\
\zerob \\
\end{pmatrix}
$.  Hence we have $m = \sum_{i = 1}^n r_i m_i$ and $r_{n - k + i}  \in \ia_i$ for $1 \le i \le k$. Since 
$\ia_i \subseteq \ann(M)$ for every $i$ by hypothesis, we actually have $m = \sum_{i = 1}^{n - k} r_i m_i$.
\end{proof}

\begin{proposition} \label{PropWithIAndSomeTorsion}
Let $I$ be a non-principal invertible ideal of $R$. 

Let $$M = R/\ia_1 \times \cdots \times R/\ia_k \times I \times R^l  \,(k \ge 1, l \ge 0)$$ where the ideals 
$\ia_i \subseteq R$ form an descending chain $R \supsetneq  \ia_1 \supseteq \cdots \supseteq \ia_k$ and $\ia_k \neq 0$.

Assume moreover that at least one of the following holds:
\begin{itemize}
\item[$(a)$] Every proper quotient of $R$ has stable rank $1$ and $I$ is  $\half$-generated.
\item[$(b)$] $R$ and $I$ are as in Proposition \ref{PropTransitivityPP} and each ideal $\ia_i$ is regular.
\end{itemize}

Let $\eb \Doteq (e_1, \dots, e_{k + l + 1})$, $(a, b) \in \V_2(I)$ with $a \in I \ia_k$.

Then the following hold:
\begin{itemize}
\item[$(i)$] $\mu(M) = k + l + 1$ and $\Fitt_{k + l}(M) = \ia_1$.
\item[$(ii)$]
For every $\mb \in \V_{k + l + 1}(M)$, we have
$$\mb \sim_G (\textnormal{det}_{\eb}(\mb)e_1, e_2, \dots, e_k, e_k + ae_{k + 1}, be_{k + 1}, e_{k + 2}, \dots, e_{k + l + 1})$$
for $G = \SL_{k + l + 1}(R)$. This holds also for $G = \E_{k + l + 1}(R)$ if $k > 1$. 
\item[$(iii)$] For every $\mb \in \V_n(M)$, we have
$$\mb \sim_{\E_n(R)} (e_1, e_2, \dots, e_k, e_k + a e_{k + 1}, be_{k + 1}, e_{k + 2}, \dots, e_{k + l + 1}, 0, \dots, 0)$$ 
if $n > k + l + 1$.
\end{itemize}

In particular, $\det_{\eb}$ induces a bijection from $\V_{k + l + 1}(M)/\SL_{k + l + 1}(R)$ onto $(R/\ia_1)^{\times}$.
\end{proposition}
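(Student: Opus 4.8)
The plan is to prove the three assertions of Proposition \ref{PropWithIAndSomeTorsion} by reducing, via the now-standard machinery of Lemma \ref{LemReductionK} and the two-factor results, to the modules already handled in Propositions \ref{PropInvariantDecomposition}, \ref{PropInvariantDecompositionKPlusD} and \ref{PropCyclicPlusIdeal}. First I would dispose of assertion $(i)$. The inequality $\mu(M) \le k + l + 1$ is read off the displayed decomposition, and since $I$ is invertible of constant rank $1$, localizing at a maximal ideal $\ip \supseteq \ia_1$ gives $M_{\ip}/\ip M_{\ip} \twoheadrightarrow (R/\ip)^{k + l + 1}$, forcing $\mu(M) \ge k + l + 1$; thus $\mu = k + l + 1$. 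The Fitting-ideal identity $\Fitt_{k + l}(M) = \ia_1$ then follows from Lemma \ref{LemFittMuMinusOne} applied to the explicit generating vector appearing in $(ii)$, exactly as in Lemmas \ref{LemRankOfInvariantDecomposition} and \ref{LemCyclicPlusIdealFitt1}; one computes the relations of that generating vector and checks that the elements it involves generate precisely $\ia_1$.

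For assertion $(ii)$, I would first record that $(a,b) \in \V_2(I)$ with $a \in I\ia_k$ can indeed be chosen, under either hypothesis $(a)$ or $(b)$, exactly as in the opening of the proof of Proposition \ref{PropCyclicPlusIdeal}: under $(a)$ the ideal $I$ is $\half$-generated and $a \in I\ia_k \setminus \{0\}$ yields $I = Ra + Rb$, while under $(b)$ one multiplies regular elements of $\ia_k$ and of $I$ to produce a regular $a$. The heart of the argument is to run the column-reduction induction. Given $\mb \in \V_{k+l+1}(M)$, I would use transitivity of $G$ on the generating vectors of the free summand $R^l$ (available since $R$ is Hermite, hence $\glr(R) = 1$, and for the elementary case since $\er(R) \le 2$ under both hypotheses via Propositions \ref{PropOneAndAHalf} and \ref{PropTransitivityPP}) to clear the last $l$ coordinates, then invoke Lemma \ref{LemReductionK} with the torsion quotients $R/\ia_i$ playing the role of the annihilated module, peeling off the block $\begin{pmatrix} A & \bb \\ \zerob & 1_{k-1}\end{pmatrix}$ and reducing $\bb$ to $\zerob$. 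This leaves a generating vector of the smaller module $R/\ia_1 \times I$, to which Proposition \ref{PropCyclicPlusIdeal} applies, delivering the normal form with the determinant factor $\det_{\eb}(\mb)$ on $e_1$. The restriction $k > 1$ for the elementary case enters precisely because the elementary-rank hypotheses of Proposition \ref{PropCyclicPlusIdealBeyondMu} require the presence of at least two torsion factors so that $\er_R(R/\ia_i) \le \max(1, i-1)$ can be met; for $k = 1$ the residual two-factor module is $R/\ia_1 \times I$ with only $\SL_2$-transitivity guaranteed.

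Assertion $(iii)$ is the stable-range statement for $n > k + l + 1$ and should follow the same reduction but with more room: here I would apply Lemma \ref{LemReductionK} to strip the trailing free and torsion blocks down, then invoke the elementary-transitivity result of Proposition \ref{PropCyclicPlusIdealBeyondMu} (case $(i)$ or $(ii)$ according to whether hypothesis $(a)$ or $(b)$ holds) together with Proposition \ref{PropInvariantDecompositionKPlusD} to absorb the extra coordinate and kill the determinant factor, yielding the claimed form with a trailing string of zeros. I expect the main obstacle to be bookkeeping rather than conceptual: keeping the block structure of $\Mat(\mb)$ coherent while simultaneously reducing the free part $R^l$, the torsion part, and the invertible-ideal part, and in particular verifying that the elementary operations used to clear one block do not disturb the already-normalized blocks. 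The delicate case is the elementary group with $k = 1$ in $(ii)$, where one genuinely cannot improve $\SL$ to $\E$, and I would make sure the statement's hypothesis $k > 1$ is invoked at exactly the step where Proposition \ref{PropCyclicPlusIdealBeyondMu}'s elementary-rank condition is needed. The concluding bijection onto $(R/\ia_1)^{\times}$ is then immediate from the normal form of $(ii)$ together with Lemma \ref{LemDet} and the injectivity argument used in Proposition \ref{PropInvariantDecomposition}.
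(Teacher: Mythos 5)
There is a genuine gap in your reduction for assertions $(ii)$ and $(iii)$. You propose to peel off the cyclic factors $R/\ia_2, \dots, R/\ia_k$ via Lemma \ref{LemReductionK} so as to land in the two-factor module $R/\ia_1 \times I$ and then invoke Proposition \ref{PropCyclicPlusIdeal}. But Lemma \ref{LemReductionK} requires the sum of the ideals attached to the cyclic factors being discarded to be contained in the annihilator of the module that remains, and $R/\ia_1 \times I$ is \emph{faithful}: $I$ is invertible, hence contains a regular element, so $\ann(R/\ia_1 \times I \times R^l) = \{0\}$, while $\ia_k \neq \{0\}$ by hypothesis. So the lemma simply does not apply in the direction you use it, and there is no elementary way to clear those rows against a faithful block. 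A symptom of the same problem is that your residual normal form pairs $I$ with $\ia_1$, whereas the statement pairs $I$ with $\ia_k$ (note $a \in I\ia_k$ sits on $e_k$, and the determinant sits on $e_1$ because $\Fitt_{k+l}(M) = \ia_1$); these are not interchangeable.

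The missing idea is the order and the quotient trick used in the paper: one first isolates the block $R/\ia_k \times I$ in the \emph{last} two rows (via Proposition \ref{PropCyclicPlusIdealBeyondMu}, which produces the rows $(\zerob, 1, 0)$ and $(\zerob, a, b)$), and then passes to $M/\ia_k M$, where $I/I\ia_k \simeq R/\ia_k$ by Lemma \ref{LemCyclicModule}. In that quotient the last two factors become cyclic with annihilator $\ia_k$, which \emph{does} annihilate the remaining product $R/\ia_1 \times \cdots \times R/\ia_{k-1}$ (since $\ia_k \subseteq \ia_{k-1}$), so Lemma \ref{LemReductionK} now legitimately shows that the columns $\bb, \cb$ lie in the span of the columns of $A$ and can be cleared. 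One is then left with a generating vector of the purely cyclic product, to which Proposition \ref{PropInvariantDecomposition} applies; the case $l > 0$ is handled by an outer induction on $l$ using $\er(R) \le 2$ and Lemma \ref{LemReduction}, much as you describe. (Your explanation of the restriction $k > 1$ in the elementary case is also slightly off target: the condition $\er_R(R/\ia_i) \le \max(1, i-1)$ belongs to Proposition \ref{PropInvariantDecomposition}.$ii$, and the real obstruction at $k = 1$, $l = 0$ is that the base case $R/\ia_1 \times I$ only admits $\SL_2$-transitivity on the fibers of the determinant, via Proposition \ref{PropCyclicPlusIdeal}.) Assertion $(i)$ and the concluding bijection are fine as you outline them.
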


\begin{proof}
$(i)$. Let $\ip$ be a maximal ideal of $R$ containing $\ia_1$. Then $M$ surjects onto 
$(R/\ip)^k \times I /I \ip \times (R/\ip)^l$. By hypothesis $I/ I \ip$ is cyclic so that $I / I \ip \simeq R/ \ip$ by Lemma \ref{LemCyclicModule}. 
Therefore $M$ surjects onto $(R/\ip)^{k + l + 1}$ and hence cannot be generated by less than $k + l + 1$ elements. Since $R/\ia_k \times I $ 
can be generated by two elements thanks to Proposition \ref{PropCyclicPlusIdeal}, $M$ can be generated by $k + l + 1$ elements. Thus $\mu(M) = k + l + 1$. Let $\mb = 
(e_1, e_2, \dots, e_k, e_k + a e_{k + 1}, be_{k + 1}, e_{k + 2}, \dots, e_{k + l + 1})$. Then $\mb$ generates $M$ 
and it follows from Lemma \ref{LemFittMuMinusOne} that $\Fitt_{k + l}(M) = \ia_1 + \Fitt_1(R/\ia_k \times I)$. 
Since $\Fitt_1(R/\ia_k \times I) = \ia_k$ by Lemma \ref{LemCyclicPlusIdealFitt1}, we obtain that $\Fitt_{k + l}(M) = \ia_1$.

$(ii)$. We shall prove by induction on $l \ge 0$ that 
$$\mb \sim_G
(\delta e_1, e_2, \dots, e_k, e_k + ae_{k + 1}, be_{k + 1}, e_{k + 2}, \dots, e_{k + l + 1})$$
for some $\delta = \delta(\mb) \in (R/\ia_1)^{\times}$. The identity $\delta = \det_{\eb}(\mb)$ is a straightforward consequence of assertion $(i)$ and Lemma \ref{LemFittMuMinusOne}.

If $l = 0 $ and $k = 1$, assertion $(ii)$ is given by Proposition \ref{PropCyclicPlusIdeal}.

If $l = 0$ and $k > 1$, Proposition  \ref{PropCyclicPlusIdealBeyondMu} yields 
$$
\Mat(\mb) \sim_{\E_{k + l + 1}(R)}
\begin{pmatrix}
A & \bb & \cb\\
\zerob & 1 & 0 \\
\zerob & a & b
\end{pmatrix}
$$
where $A$ is a $(k + l - 1) \times (n - 1)$ matrix, $\bb$ and $\cb$ are column vectors of size $k + l - 1$.
By Lemma \ref{LemCyclicModule}, the map $b \mapsto 1$ induces an isomorphism from $I/I \ia_k$ onto $R/\ia_k$.
This isomorphism allows us to identify $M/\ia_k M$ with $R/\ia_1 \times \cdots \times R/\ia_k \times R/\ia_k$. 
Under this identification the image $\overline{\mb}$ of $\mb$ in $(M/\ia_k M)^{k + l + 1}$ satisfies  
$
\Mat(\omb) =
\begin{pmatrix}
A & \bb & \cb\\
\zerob & 1 & 0 \\
\zerob & 0 & 1
\end{pmatrix}
$. By Lemma \ref{LemReductionK}, the vectors $\bb$ and $\cb$ are in the $R$-linear span of the columns of $A$.
Thus 
$
\Mat(\mb) \sim_{\E_{k + l + 1}(R)}
\begin{pmatrix}
A & \zerob & \zerob \\
\zerob & 1 & 0 \\
\zerob & a & b
\end{pmatrix}
$. We complete the proof of the induction basis by applying Proposition \ref{PropInvariantDecomposition}
 to the generating vector $\mb' \in \V_k(R/ \ia_1 \times \cdots \times R/\ia_k)$ defined by $\Mat(\mb') = A$.

 Assume now that $l > 0$. We have $\er(R) \le 2$ by Lemma \ref{LemAlmostStableRankOne} under assumption $(a)$ and
  by Proposition \ref{PropTransitivityPP} under assumption $(b)$. Thus
$$\Mat(\mb) \sim_{\E_{k + l + 1}(R)}
\begin{pmatrix}
A & \bb \\
\zerob & 1
\end{pmatrix}
\sim_{\E_{k + l + 1}(R)}
\begin{pmatrix}
A & \zerob \\
\zerob& 1
\end{pmatrix}
$$
where $A$ is a $(k + l) \times (n - 1)$ matrix, $\bb$ a column vector of size $k + l$ 
and the second equivalence is given by Lemma \ref{LemReduction}. Our claim follows now from the induction hypothesis 
applied to the generating vector $\mb' \in \V_{k + l}(R/ \ia_1 \times \cdots \times R/\ia_k \times I \times R^{l - 1})$ defined by $\Mat(\mb') = A$.

$(iii)$. We proceed again by induction on $l \ge 0$. 
Assume that $l = 0$. If moreover $k = 1$, then assertion $(iii)$ is given by Proposition \ref{PropCyclicPlusIdealBeyondMu}. 
If $k > 1$, reduce $\Mat(\mb)$ as in $(ii)$ and use Proposition \ref{PropInvariantDecomposition} to complete the proof of this induction basis.
If $l > 0$, reduce $\Mat(\mb)$ as in $(ii)$ and apply the induction hypothesis.
\end{proof}

\begin{proposition} \label{PropFree}
Let $R$ be a Hermite ring. Then the following hold.
\begin{eqnarray}
\V_2(R)/\E_2(R) & \simeq & \E_{12}(R) \backslash \SL_2(R)/\E_2(R), \label{EqE21} \\ 
\V_n(R)/\E_n(R) & \simeq &  \SL_{n- 1}(R)\E_n(R) \backslash \SL_n(R) \text{ for } n > 2, \label{EqSL} \\
\V_n(R^n)/\E_n(R) & \simeq & R^{\times} \times \left(\SL_n(R) / \E_n(R) \right) \text{ for } n \ge 2, \label{EqSLl} \\
\V_n(R^l)/\E_n(R) & \simeq & \SL_{n - l}(R)\E_n(R) \backslash \SL_n(R)   \text{ for } n > l \ge 2. \label{EqSLn}
\end{eqnarray}
\end{proposition}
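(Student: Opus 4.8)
The plan is to derive all four identities from one mechanism. Since $R$ is Hermite, $\SL_n(R)$ acts transitively on the relevant set of generating vectors, so that set is identified with a coset space $\mathrm{Stab}(\mathbf{x}_0)\backslash \SL_n(R)$ for a suitable base point $\mathbf{x}_0$; passing to $\E_n(R)$-orbits on the right then identifies the quotient with the double-coset space $\mathrm{Stab}(\mathbf{x}_0)\backslash \SL_n(R)/\E_n(R)$. Each identity will follow by computing the stabilizer and, when $n \ge 3$, collapsing the double cosets using the normality of $\E_n(R)$ in $\SL_n(R)$ (Suslin's theorem). The only genuinely separate argument is (\ref{EqSLl}).

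For (\ref{EqE21}) and (\ref{EqSL}) I take $M = R$; transitivity of $\SL_n(R)$ on $\V_n(R)$ for $n \ge 2$ is exactly the Hermite hypothesis. With base point $e_n = (0,\dots,0,1)$, the stabilizer consists of the $\sigma \in \SL_n(R)$ whose last row is $e_n$. For $n = 2$ this is precisely $\E_{12}(R)$, which gives (\ref{EqE21}) immediately; here $\E_2(R)$ need not be normal, so the double coset cannot collapse, and the genuine double-coset form is forced. For $n > 2$ the stabilizer is $\left\{\left(\begin{smallmatrix} A & b\\ 0 & 1\end{smallmatrix}\right): A \in \SL_{n-1}(R),\, b \in R^{n-1}\right\}$, and the factorization $\left(\begin{smallmatrix} A & b \\ 0 & 1\end{smallmatrix}\right) = \left(\begin{smallmatrix} A & 0 \\ 0 & 1\end{smallmatrix}\right)\left(\begin{smallmatrix} 1 & A^{-1}b \\ 0 & 1\end{smallmatrix}\right)$ shows $\mathrm{Stab}(e_n)\E_n(R) = \SL_{n-1}(R)\E_n(R)$. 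Since $n \ge 3$, normality of $\E_n(R)$ in $\SL_n(R)$ lets the double-coset space collapse to the right-coset space $\SL_{n-1}(R)\E_n(R)\backslash \SL_n(R)$, which is (\ref{EqSL}).

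Identity (\ref{EqSLn}) is the analogue for $M = R^l$ and is the main content. First I would prove that $\SL_n(R)$ acts transitively on $\V_n(R^l)$ for $n > l$, by induction on $l$: the base $l = 1$ is the Hermite hypothesis, and for the step I observe that the bottom row of $\Mat(\mb)$ is unimodular (a right inverse of the surjection $\Mat(\mb)$ pairs with that row to $1$), so the Hermite property brings it to $e_n$ by some $\sigma \in \SL_n(R)$; Lemma \ref{LemReduction} then clears the last column, reducing $\Mat(\mb)\sigma$ to $\diag(A,1)$ with $A = \Mat(\mb')$ for some $\mb' \in \V_{n-1}(R^{l-1})$, to which the induction hypothesis applies. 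With transitivity established, the base point $\eb^{(l)} = (e_1,\dots,e_l,0,\dots,0)$ has stabilizer $\left\{\left(\begin{smallmatrix} 1_l & 0 \\ C & D\end{smallmatrix}\right): D \in \SL_{n-l}(R),\, C \in \M_{(n-l)\times l}(R)\right\}$, and the same factorization trick gives $\mathrm{Stab}(\eb^{(l)})\E_n(R) = \SL_{n-l}(R)\E_n(R)$; as $n > l \ge 2$ forces $n \ge 3$, normality collapses the double cosets and yields (\ref{EqSLn}). Finally, (\ref{EqSLl}) needs no transitivity input: a generating $n$-tuple of $R^n$ is a surjective, hence invertible, endomorphism of $R^n$, so $\V_n(R^n) \cong \GL_n(R)$ compatibly with the right $\E_n(R)$-actions, whence $\V_n(R^n)/\E_n(R) \cong \GL_n(R)/\E_n(R)$. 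The determinant together with the splitting $u \mapsto \diag(u,1,\dots,1)$ then produces the bijection $g\E_n(R) \mapsto \left(\det g,\ \diag((\det g)^{-1},1,\dots,1)\,g\,\E_n(R)\right)$ onto $R^{\times} \times (\SL_n(R)/\E_n(R))$.

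The main obstacle is the transitivity of $\SL_n(R)$ on $\V_n(R^l)$ together with the bookkeeping of the stabilizers; once that is in place, the collapse of the double cosets for $n \ge 3$ rests on the normality of $\E_n(R)$, which is external to the paper but standard. For $n = 2$ no such collapse exists—and indeed must fail—which is exactly why (\ref{EqE21}) retains the genuine double-coset form. A minor point to treat carefully is that the block position of the embedded $\SL_{n-l}(R)$ depends on the chosen base point, but any two such embeddings are $\SL_n(R)$-conjugate and hence yield the same coset space up to the induced bijection.
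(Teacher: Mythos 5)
Your proof is correct and follows essentially the same route as the paper: orbit--stabilizer identifications combined with Suslin's normality theorem to collapse the double cosets for $n \ge 3$, with the transitivity of $\SL_n(R)$ on $\V_n(R^l)$ obtained by the same row-reduction induction that underlies the paper's Propositions \ref{PropTorsionFree} and \ref{PropInvariantDecomposition}, and (\ref{EqE21}) reducing to the stabilizer computation packaged in Lemma \ref{LemRPlusCyclicE}. The only cosmetic differences are your choice of base point and your direct identification $\V_n(R^n)\cong\GL_n(R)$ for (\ref{EqSLl}), where the paper instead enumerates the $\SL_n(R)$-orbits by the determinant.
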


\begin{proof}
(\ref{EqE21}) Apply Lemma \ref{LemRPlusCyclicE} with $\ia = R$.
(\ref{EqSL}) We have $\V_n(R) = e_1\SL_n(R)$. The stabilizer of 
$e_1$ for the action of $\SL_n(R)$ is $\SL_{n - 1}(R)$. As $\E_n(R)$ is normal in $\SL_n(R)$ by Suslin's Normality Theorem \cite[Theorem 10.8]{Mag02}, the result follows. 
(\ref{EqSLl}) By Proposition \ref{PropInvariantDecomposition}, we have $\V_n(R) = \bigsqcup_{u \in R^{\times}} (ue_1, e_2, \dots, e_n)\SL_n(R)$. As the stabilizer of 
$(ue_1, e_2, \dots, e_n)$ for the action of $\SL_n(R)$ is trivial, the result follows. 
(\ref{EqSLn}) By Proposition \ref{PropInvariantDecomposition}, we have $$\V_n(R) = (e_1, e_2, \dots, e_l, 0, \dots, 0)\SL_n(R).$$ The stabilizer $S$ of 
$(e_1, e_2, \dots, e_l, 0, \dots, 0)$ for the action of $\SL_n(R)$ satisfies $\SL_{n - l}(R) \subset S \subset \E_n(R) \SL_{n - l}(R)$. Since $\E_n(R)$ is normal in $\SL_n(R)$, the result follows.
\end{proof}

\begin{proposition} \label{PropSK1Rl}
Let $R$ be a Hermite ring.
\begin{itemize}
\item [$(i)$] If $n > l + \sr(R) - 1$, then $\E_n(R)$ acts transitively on $\V_n(R^l)$.
\item [$(ii)$] If $\sr(R) \le 2$, then we have
\begin{eqnarray}
\V_n(R^n)/\E_n(R) & \simeq & R^{\times} \times \SK_1(R)  \text{ for } n > 2, \label{EqRTimesSK1} \\
\V_{n + 1}(R^n)/\E_{n + 1}(R) & \simeq & \SK_1(R) \text{ for } n \ge 2. \label{EqSK1}
\end{eqnarray}
\end{itemize}
\end{proposition}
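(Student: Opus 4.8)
The plan is to prove $(i)$ by induction on $l$ via an explicit matrix reduction, and to deduce $(ii)$ from Proposition \ref{PropFree} together with the stabilization of $\SK_1$. For $(i)$, I would show by induction on $l \ge 1$ that every $\mb \in \V_n(R^l)$ with $n > l + \sr(R) - 1$ is $\E_n(R)$-equivalent to $(e_1, \dots, e_l, 0, \dots, 0)$. The base case $l = 1$ is immediate: by Proposition \ref{PropRanks}.$i$ we have $\er(R) \le \sr(R) < n$, so $\E_n(R)$ already acts transitively on $\V_n(R)$. For the inductive step I would set $A = \Mat(\mb)$, an $l \times n$ matrix whose columns generate $R^l$ and whose last row is a unimodular row of length $n$. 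Since $n > l + \sr(R) - 1 \ge \sr(R) \ge \er(R)$, right multiplication by some $E \in \E_n(R)$ sends this last row to $(1, 0, \dots, 0)$; replacing $\mb$ by $\mb E$, I may assume $A = \begin{pmatrix} u & D \\ 1 & \zerob \end{pmatrix}$ with $u \in R^{l-1}$ and $D$ of size $(l-1) \times (n-1)$.

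The crux is the observation that the columns of $D$ already generate $R^{l-1}$: writing any $(v, 0)^{\top}$ as an $R$-combination of the columns of $A$ and comparing last coordinates forces the coefficient of the first column to be zero, so $v$ lies in the column span of $D$. Hence $u$ is an $R$-combination of the columns of $D$, and subtracting the matching multiples of columns $2, \dots, n$ from the first column — an operation in $\E_n(R)$ that preserves the last row — replaces the first column by $e_l$. The columns $2, \dots, n$ now lie in $R^{l-1} \times \{0\}$ and form a generating vector of $R^{l-1}$ of length $n - 1 > (l-1) + \sr(R) - 1$, so the induction hypothesis — applied through the copy of $\E_{n-1}(R)$ inside $\E_n(R)$ fixing the first column — brings them to $(e_1, \dots, e_{l-1}, 0, \dots, 0)$. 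Finally, $n > l$ guarantees a spare zero column, which lets me reorder the standard basis columns into $(e_1, \dots, e_l, 0, \dots, 0)$ within $\E_n(R)$, closing the induction.

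For $(ii)$ I would combine Proposition \ref{PropFree} with the stability isomorphism $\SL_m(R)/\E_m(R) \cong \SK_1(R)$, which holds for $m \ge 3$ whenever $\sr(R) \le 2$ (Vaserstein's stability theorem; see Section \ref{SubSecDirectProductsOfRings}). For (\ref{EqRTimesSK1}), equation (\ref{EqSLl}) gives $\V_n(R^n)/\E_n(R) \simeq R^{\times} \times (\SL_n(R)/\E_n(R))$, and the second factor equals $\SK_1(R)$ once $n > 2$. For (\ref{EqSK1}), equation (\ref{EqSLn}) with $l = n$ gives $\V_{n+1}(R^n)/\E_{n+1}(R) \simeq \SL_1(R)\E_{n+1}(R) \backslash \SL_{n+1}(R) = \E_{n+1}(R) \backslash \SL_{n+1}(R)$, which for $n \ge 2$ is equipotent with $\SL_{n+1}(R)/\E_{n+1}(R) \cong \SK_1(R)$.

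The main obstacle is obtaining the sharp bound in $(i)$. The general estimate $\er(M) \le \mu(M) + \sr(M) - 1$ of Proposition \ref{PropERank}.$iii$, applied to $M = R^l$, only yields a bound of order $2l + \sr(R) - 2$, so one must exploit the freeness of $R^l$ through the reduction above. The delicate point is that the action is by right $\E_n(R)$-multiplication only, with no row operations available; the clearing of the first column must therefore be carried out purely by column operations, and one has to verify that the residual $(l-1) \times (n-1)$ block is still a generating vector so that the induction can continue.
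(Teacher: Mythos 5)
Your proof is correct and follows essentially the same route as the paper: part $(ii)$ is exactly the paper's argument (Proposition \ref{PropFree} plus $\SK_1$-stability), and your induction for part $(i)$ is precisely the column-reduction argument that the paper packages as Proposition \ref{PropInvariantDecompositionKPlusD} (applied with $d = \sr(R)-1$ and all $\ia_i = \{0\}$) together with Lemma \ref{LemReduction}. The only difference is presentational: you re-derive the reduction directly for the free module instead of citing the general proposition.
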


\begin{proof}
$(i)$. This is an immediate consequence of Proposition \ref{PropInvariantDecompositionKPlusD}.
$(ii)$. Combine the identifications (\ref{EqSLl}) and (\ref{EqSLn}) of Proposition \ref{PropFree} with $\SK_1$ Stability Theorem \cite[Corollary 11.19.$v$]{Mag02}.
\end{proof}

\begin{lemma} \label{LemSK1}
Let $\ia$ be an ideal of $R$ and let $M$ be a finitely generated $R$-module. Assume that at least one of the following holds.
\begin{itemize}
\item[$(i)$] $R$ is Hermite and $M = R/\ia \times R^l$ with $l > 1$.
\item[$(ii)$] $R$ and $I$ are as in Proposition \ref{PropTransitivityPP}, $\ia$ is regular and $M = R/\ia \times I \times R^l$ with $l > 0$.
\end{itemize}

Then $\ia = \Fitt_{\mu - 1}(M)$ and $\V_{\mu}(M)/\E_{\mu}(R)$ is equipotent with 
$$(R/\ia)^{\times} \times \SL_{\mu}(R) /\E_{\mu}(R)$$ where $\mu = \mu(M)$.
If in addition $\sr(R) \le 2$, e.g., if $(ii)$ holds,  then $\V_{\mu}(M)/\E_{\mu}(R)$ can be further identified with $(R/\ia)^{\times} \times \SK_1(R)$.
\end{lemma}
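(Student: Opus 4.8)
The plan is to realise $\V_{\mu}(M)/\E_{\mu}(R)$ as a bundle over $(R/\ia)^{\times}$ with fibre $\SL_{\mu}(R)/\E_{\mu}(R)$, exactly as in Lemma \ref{LemRPlusCyclicE}.$(ii)$ but now with $\mu = \mu(M) > 2$. First I would record the numerical data. In case $(i)$, Lemma \ref{LemRankOfInvariantDecomposition} applied to $M = R/\ia \times R^{l} = R/\ia \times R/\{0\} \times \cdots \times R/\{0\}$ gives $\mu = l + 1$ and $\Fitt_{\mu - 1}(M) = \ia$. In case $(ii)$ I may assume $I$ is non-principal (otherwise $I \simeq R$ and $M \simeq R/\ia \times R^{l + 1}$ falls under case $(i)$, as $R$ is Hermite by Proposition \ref{PropTransitivityPP}), so that Proposition \ref{PropWithIAndSomeTorsion} with $k = 1$ applies and yields $\mu = l + 2$ together with $\Fitt_{\mu - 1}(M) = \ia$. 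In both cases $\mu \ge 3$.

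Next I would fix standard generating vectors: in case $(i)$ set $\mb_u \Doteq (ue_1, e_2, \dots, e_{\mu})$, and in case $(ii)$ set $\mb_u \Doteq (ue_1 + ae_2, be_2, e_3, \dots, e_{\mu})$ with $(a,b) \in \V_2(I)$ and $a \in I\ia$, as produced by Proposition \ref{PropCyclicPlusIdeal}. The core transitivity statement is that $\det_{\eb}$ induces a bijection $\V_{\mu}(M)/\SL_{\mu}(R) \to (R/\ia)^{\times}$, i.e. every $\mb$ is $\SL_{\mu}(R)$-equivalent to $\mb_{u}$ with $u = \det_{\eb}(\mb)$. In case $(ii)$ this is precisely Proposition \ref{PropWithIAndSomeTorsion}.$(ii)$. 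In case $(i)$ I cannot invoke Proposition \ref{PropInvariantDecomposition} directly, since a quotient of a Hermite ring need not be Hermite; instead I would exploit the $l \ge 2$ free summands. Projecting $M \twoheadrightarrow R^{l}$ and applying Proposition \ref{PropInvariantDecomposition} to $R^{l}$ (where the hypothesis $\glr_R(R) = 1$ does hold) reduces $\Mat(\mb)$, after an $\SL_{\mu}(R)$-move, to a matrix whose lower $l$ rows are $(1_l \mid \zerob)$; generation of the $R/\ia$-factor then forces the remaining free-column entry to be a unit of $R/\ia$, which can be used to clear the top row and reach some $\mb_{w}$, and the $\SL_{\mu}(R)$-invariance of $\det_{\eb}$ forces $w = \det_{\eb}(\mb) = u$.

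The heart of the matter is the orbit--coset bookkeeping together with the stabiliser computation. Since $\mu > 2$, the group $\E_{\mu}(R)$ is normal in $\SL_{\mu}(R)$ by Suslin's Normality Theorem, so $\SL_{\mu}(R)/\E_{\mu}(R)$ is the object in the statement. For each $\mb$ pick $\sigma_{\mb} \in \SL_{\mu}(R)$ with $\mb = \mb_{u}\sigma_{\mb}$ ($u = \det_{\eb}(\mb)$) and send $\mb\,\E_{\mu}(R) \mapsto (u, \sigma_{\mb}\E_{\mu}(R))$. A formal verification, identical to that of Lemma \ref{LemRPlusCyclicE}.$(ii)$, shows this is a well-defined bijection onto $(R/\ia)^{\times} \times \SL_{\mu}(R)/\E_{\mu}(R)$ \emph{provided} the stabiliser $S_u \Doteq \mathrm{Stab}_{\SL_{\mu}(R)}(\mb_{u})$ lies in $\E_{\mu}(R)$ (normality is what lets one pass between left and right cosets). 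In case $(i)$ this is immediate: the equations $\mb_u\tau = \mb_u$ force $\tau = \left(\begin{smallmatrix} 1 & \tb \\ \zerob & 1_{l}\end{smallmatrix}\right)$ with $\tb \in \ia^{l}$, a product of elementary matrices.

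Case $(ii)$ is the step I expect to be the main obstacle. Here $\mb_u\tau = \mb_u$ forces $\tau = \left(\begin{smallmatrix} T & S \\ \zerob & 1_{l}\end{smallmatrix}\right)$ with $T \in \SL_2(R)$ satisfying $(a,b)T = (a,b)$ and $T$ lower-elementary modulo $\ia$; writing $\tau = \diag(T, 1_l)\left(\begin{smallmatrix} 1_2 & T^{-1}S \\ \zerob & 1_l\end{smallmatrix}\right)$ reduces the claim to $\diag(T, 1_l) \in \E_{\mu}(R)$, equivalently $[T] = 1$ in $\SK_1(R)$. I would establish this by localising: since $I$ is invertible, $(a,b)$ becomes unimodular after inverting a regular local generator, so $T$ is elementary at every maximal ideal, while the congruence extracted from $a \in I\ia$ and the mod-$\ia$ relations isolates a factor of $T$ lying in the congruence subgroup modulo $\ia$; together with $\sr(R) \le 2$ and the $PP$-structure of $R$ this should force the class of $T$ in $\SK_1(R)$ to vanish. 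Once $S_u \subseteq \E_{\mu}(R)$ is in hand the bijection is established, and when $\sr(R) \le 2$ (automatic in case $(ii)$) the $\SK_1$ Stability Theorem \cite[Corollary 11.19.$v$]{Mag02}, applicable since $\mu > 2$, identifies $\SL_{\mu}(R)/\E_{\mu}(R)$ with $\SK_1(R)$, as in Proposition \ref{PropSK1Rl}.
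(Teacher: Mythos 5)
Your overall architecture is the same as the paper's: split $\V_{\mu}(M)$ into $\SL_{\mu}(R)$-orbits indexed by $(R/\ia)^{\times}$ via the determinant, compute the stabiliser of a standard generating vector $\mb_u$, show it lies in $\E_{\mu}(R)$, and conclude using normality of $\E_{\mu}(R)$ in $\SL_{\mu}(R)$ together with the $\SK_1$ Stability Theorem. Your case $(i)$ is complete and correct (including the careful detour around the hypotheses of Proposition \ref{PropInvariantDecomposition} and the reduction of a principal $I$ to case $(i)$), and the stabiliser there is indeed the unipotent group $\bigl\{\bigl(\begin{smallmatrix}1 & \tb \\ \zerob & 1_l\end{smallmatrix}\bigr) : \tb \in \ia^l\bigr\} \subseteq \E_{\mu}(R)$.

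The genuine gap is exactly where you locate it: in case $(ii)$ you must show $\diag(T,1_l) \in \E_{\mu}(R)$ for every $T \in \SL_2(R)$ with $(a,b)T = (a,b)$, and the argument you sketch cannot succeed as stated. Knowing that $T$ is elementary after localisation at every maximal ideal gives no control on its class in $\SK_1(R)$: local triviality is precisely what $\SK_1$ fails to detect (over any Dedekind domain every matrix of $\SL_2$ is locally elementary, yet $\SK_1$ need not vanish), and the congruences modulo $\ia$ do not rescue this. A correct way to close the gap is structural. Since $\det T = 1$ and $(a,b)T = (a,b)$, the matrix $T$ is, with respect to the splitting $R^2 \simeq Q \oplus I$ where $Q \simeq I^{-1}$ is the syzygy module of $(a,b)$, the transvection $1 + \iota_Q h \pi_I$ for some $h \in \Hom(I,Q) \simeq I^{-2}$. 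The composition map $\Hom(R,Q) \otimes \Hom(I,R) \rightarrow \Hom(I,Q)$ is surjective (it is an isomorphism of invertible modules, as one checks locally), so $h = \sum_i \beta_i \alpha_i$ with $\alpha_i \in \Hom(I,R)$ and $\beta_i \in \Hom(R,Q)$, where the middle $R$ is taken to be one of the free summands $Re_j$, $j \ge 3$ --- available precisely because $l > 0$. Then $\diag(T,1_l) = \prod_i \bigl[\, 1 + \iota_Q\beta_i\pi_R, \; 1 + \iota_R\alpha_i\pi_I \,\bigr]$, and each factor inside a commutator differs from $1_{\mu}$ only in the row (respectively column) indexed by the chosen free summand, with a vanishing diagonal correction; hence each is a product of elementary matrices and $\diag(T,1_l) \in \E_{\mu}(R)$. (For what it is worth, the paper's own proof records only the unipotent part of the stabiliser at this point, so your instinct that this is the delicate step of the lemma is sound.)
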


\begin{proof}
The identity  $\ia = \Fitt_{\mu - 1}(M)$ is given by Lemma \ref{LemRankOfInvariantDecomposition} and Proposition \ref{PropWithIAndSomeTorsion}.
If $(i)$, respectively $(ii)$ holds, then $\V_{\mu}(M) =  \bigsqcup_{u \in (R/\ia)^{\times}} \mb_u \SL_{\mu}(R)$ where $\mb_u = (u e_1, e_2, \dots, e_{\mu})$, 
respectively $\mb_u = (u e_1, e_2 + a e_3, b e_3, e_4, \dots, e_{\mu})$ and $\mu = l$, respectively $\mu = l + 2$,
by Propositions \ref{PropInvariantDecomposition} and \ref{PropWithIAndSomeTorsion}. 
In each case, it is easily checked that the stabilizer $S_u$ of $\mb_u$ for the action of $\SL_{\mu}(R)$ consists in the matrices of the form 
$$
\begin{pmatrix}
1 & \rb \\
\zerob & 1_{\mu - 1} \\
\end{pmatrix}
$$
where $\rb$ is a row vector of length $\mu - 1$ whose components lie in $\ia$. Therefore $S_u \subset \E_{\mu}(R)$.
Since  $\E_{\mu}(R)$ is normal in $\SL_{\mu}(R)$ by Suslin's Normality Theorem \cite[Theorem 10.8]{Mag02}, we deduce that 
$\V_{\mu}(M)/\E_{\mu}(R)$ is equipotent with $(R/\ia)^{\times} \times \SL_{\mu}(R)/\E_{\mu}(R)$. If $\sr(R) \le 2$, then $\SL_{\mu}(R) /\E_{\mu}(R) \simeq \SK_1(R)$
by the $\SK_1$ Stability Theorem \cite[Corollary 11.19.$v$]{Mag02}. 
Hence $\V_{\mu}(M)/\E_{\mu}(R) \simeq (R/\ia)^{\times} \times \SK_1(R)$.
\end{proof}

\section{Direct products of rings and quotient rings} \label{SubSecDirectProductsOfRings}

The proofs of Theorems \ref{ThSL} and \ref{ThSK1} rely on the existence of invariant decompositions defined in Theorems \ref{ThEDRDecomposition} and \ref{ThInvariantDecomposition}.
The results in Section \ref{SecMoreThanTwo} don't address the decomposition of a module $M$ as in Theorem  \ref{ThInvariantDecomposition} 
when non-trivial idempotents $e_i$ split the torsion-free part of $M$.
They are however sufficient to prove Theorems \ref{ThSL} and \ref{ThSK1} in full generality. Indeed, thanks to the splittings 
$R \simeq \prod_i Re_i$ and $M \simeq \prod_i e_i M$, Proposition \ref{PropProduct} below allows us to handle the submodules $e_i M$ independently.
We also prove Proposition \ref{PropStability} by establishing Propositions \ref{PropProduct} and \ref{PropQuotient} below.

Given a ring $R$, let $\E(R) \Doteq \bigcup_n \E_n(R)$ and $\SL(R) \Doteq \bigcup_n \SL_n(R)$ be the ascending unions for which 
the embeddings $\E_n(R) \rightarrow \E_{n + 1}(R)$ and  $\SL_n(R) \rightarrow \SL_{n + 1}(R)$ are defined 
through $A \mapsto \begin{pmatrix} A & 0 \\ 0 & 1 \end{pmatrix}$. The \emph{special Whitehead group} $\SK_1(R)$ of $R$ is the quotient group $\SL(R)/\E(R)$.
\begin{lemma} \label{LemNaturalMaps}
Let $\{R_x\}$ be a family of rings where $x$ ranges in a set $X$ and let $R = \prod_x R_x$. 
Then the following hold.
\begin{itemize}
\item[$(i)$] 
The natural maps  $\SL_n(R) \rightarrow  \prod_x \SL_n(R_x)$ and $\GL_n(R) \rightarrow  \prod_x \GL_n(R_x)$ are isomorphisms for every $n \ge 1$.
\item[$(ii)$]
If $X$ is finite, then the natural map 
$\E_n(R) \rightarrow  \prod_x \E_n(R_x)$ is an isomorphism for every $n \ge 1$.
\item[$(iii)$] 
Let $n \ge 2$. If every matrix in $\E_n(R_x)$ is the product of at most $\nu_n$ elementary matrices where $\nu_n < \infty$ is independent on $x$,
then the natural map $\E_n(R) \rightarrow  \prod_x \E_n(R_x)$ is an isomorphism.
\item[$(iv)$] If $X$ is finite, or if $\sr(R_x)$ is bounded and the natural map $\E_n(R) \rightarrow  \prod_x \E_n(R_x)$ is an isomorphism, then the natural map 
$$\SK_1(R) \rightarrow  \prod_x \SK_1(R_x)$$ is an isomorphism.
\end{itemize}
\end{lemma}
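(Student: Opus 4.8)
The plan is to reduce all four assertions to the single ring-theoretic identification $\M_n(R) \cong \prod_x \M_n(R_x)$ and then track how $\SL_n$, $\GL_n$, $\E_n$ and $\SK_1$ behave under it. For $(i)$ I would first observe that reading off a matrix entrywise gives a ring isomorphism $\M_n(R) \xrightarrow{\sim} \prod_x \M_n(R_x)$, $A = (a_{ij}) \mapsto (A_x)_x$ with $A_x = \left((a_{ij})_x\right)$, since matrix addition and multiplication are computed coordinatewise in the factors $R_x$. Passing to unit groups gives $\GL_n(R) \cong \prod_x \GL_n(R_x)$ because an element of a product ring is a unit if and only if each of its components is. For $\SL_n$, I would note that $\det$ is a fixed polynomial in the entries, so $\det(A) = (\det A_x)_x$; hence $\det A = 1$ in $R$ if and only if $\det A_x = 1$ for every $x$, giving $\SL_n(R) \cong \prod_x \SL_n(R_x)$.

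For $(ii)$ and $(iii)$ the map $\E_n(R) \to \prod_x \E_n(R_x)$ is the restriction of the isomorphism of $(i)$, hence injective, and it is well defined because an elementary matrix $e_{ij}(r)$ (the identity modified by $r = (r_x)_x$ in position $(i,j)$) maps to $(e_{ij}(r_x))_x$. Only surjectivity requires work. In $(ii)$, with $X$ finite, I would use the orthogonal idempotents $\varepsilon_x \in R$ equal to $1$ in the $x$-component and $0$ elsewhere: lifting $s \in R_x$ to $\tilde s \Doteq s\varepsilon_x \in R$ turns each elementary factor of a given $g_x \in \E_n(R_x)$ into an elementary matrix over $R$ that is trivial off the $x$-component, so their product $\tilde g_x \in \E_n(R)$ projects to $g_x$ in the $x$-slot and to $1_n$ elsewhere; then the finite product $\prod_x \tilde g_x \in \E_n(R)$ maps to $(g_x)_x$. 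For $(iii)$ I would first pad each $g_x$ (inserting factors $e_{12}(0)=1_n$, legitimate since $n \ge 2$) so that it is a product of \emph{exactly} $\nu_n$ elementary matrices, then partition $X$ into the finitely many classes $X_w$ according to the sequence $w$ of off-diagonal positions used. On each class the positions are constant, so assembling the corresponding coefficients coordinatewise (with value $0$ outside $X_w$) yields an $h_w \in \E_n(R)$ whose $x$-component is $g_x$ for $x \in X_w$ and $1_n$ otherwise; the finite product $\prod_w h_w \in \E_n(R)$ then projects to $(g_x)_x$.

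For $(iv)$ I would use that $\SK_1(R) = \SL(R)/\E(R)$ with $\SL(R) = \bigcup_n \SL_n(R)$ and $\E(R) = \bigcup_n \E_n(R)$, together with the elementary fact that for normal subgroups $B_x \trianglelefteq A_x$ the natural map gives $(\prod_x A_x)/(\prod_x B_x) \cong \prod_x (A_x/B_x)$. When $X$ is finite, $(i)$ and $(ii)$ upgrade to $\SL(R) \cong \prod_x \SL(R_x)$ and $\E(R) \cong \prod_x \E(R_x)$ (a finite tuple of matrices sits, after embedding, in a single $\SL_N$ or $\E_N$ with $N$ the maximum of the sizes), whence $\SK_1(R) \cong \prod_x \SK_1(R_x)$ by the quotient identity. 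When instead $\sr(R_x) \le s$ for all $x$ and $\E_n(R) \cong \prod_x \E_n(R_x)$, I would first check $\sr(R) \le s$ (a unimodular row over $R = \prod_x R_x$ is, by $(i)$, a coordinatewise family of unimodular rows, and the shortening can be performed coordinatewise and reassembled), and then fix a single $n$ in this common stable range. The $\SK_1$ Stability Theorem \cite[Corollary 11.19.$v$]{Mag02} gives $\SK_1(R) \cong \SL_n(R)/\E_n(R)$ and $\SK_1(R_x) \cong \SL_n(R_x)/\E_n(R_x)$ for every $x$; combining $(i)$, the hypothesis $\E_n(R) \cong \prod_x \E_n(R_x)$, and the quotient identity yields $\SK_1(R) \cong \prod_x \bigl(\SL_n(R_x)/\E_n(R_x)\bigr) \cong \prod_x \SK_1(R_x)$.

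The main obstacle is the surjectivity in $(iii)$: unlike the finite case one cannot simply multiply infinitely many localized elementary words, so the uniform bound $\nu_n$ must be exploited to cut $X$ into finitely many position-patterns before coordinatewise assembly. The other delicate point is in $(iv)$, namely verifying that the uniform stable-rank bound really allows one fixed $n$ to stabilize $\SK_1$ simultaneously for $R$ and for all the $R_x$, and that the quotient-of-products identity survives for infinite $X$.
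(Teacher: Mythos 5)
Your proof is correct and follows essentially the same route as the paper's: the componentwise identification of $\M_n(R)$ with $\prod_x \M_n(R_x)$ gives $(i)$, coordinatewise assembly of elementary words gives $(ii)$ and $(iii)$, and $(iv)$ reduces $\SK_1$ to a fixed $\SL_n/\E_n$ via the stability theorem. The only cosmetic difference is in $(iii)$: you partition $X$ into finitely many position-patterns and multiply the resulting finitely many factors, whereas the paper pads each elementary factor to a full lexicographically ordered sweep of all $n(n-1)$ positions (with zero coefficients at unused slots) so that one word shape of length $\nu_n\, n(n-1)$ works for every $x$ simultaneously --- both devices exploit the uniform bound $\nu_n$ in the same way.
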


\begin{proof}
We first observe that the natural maps considered in the assertions $(i)$ to $(iii)$ are clearly injective.
Assertion $(i)$ is obvious.
For $(ii)$, see \cite[Proof of Theorem 3.1]{Coh66} or reason as in $(iii)$. 

$(iii)$. Let us prove that the natural map $\varphi: \E_n(R) \rightarrow  \prod_x \E_n(R_x)$ is surjective. 
Let $(E_x) \in \prod_x \E_n(R_x)$.  Write $E_x = E_{x, 1} \cdots E_{x, \nu_n}$ with $E_{x, k} = E_{i(x)j(x)}(r_{x, k}) \, (r_{x,k} \in R_x)$ 
where $E_{ij}(r)$ denotes the elementary matrix whose $(i, j)$-coefficient is $r$ and whose other off-diagonal coefficients are zero. 
Then we have $E_x = \prod_{k = 1}^{\nu_n} \prod_{1 \le i \neq j \le n} E_{ij}(r_{x, k}(i,j))$ where the factors indexed by the pairs $(i, j)$ are ordered lexicographically and 
$$r_{x, k}(i, j) = \left\{ 
\begin{array}{cc}
r_{x, k} & \text{ if }(i, j) = (i(x), j(x)), \\
0 & \text{ else.}
\end{array}\right.
$$ Set 
$E \Doteq \prod_{k = 1}^{\nu_n} \prod_{1 \le i \neq j \le n} E_{ij}(r_{kij})$, with $r_{kij} = (r_{k, x}(i, j)) \in R$. 
Then $\varphi(E) = (\gamma_x)$, which proves that 
$\varphi$ is an isomorphism.

$(iv)$. It is trivial to check that the natural map $\psi: \SK_1(R) \rightarrow  \prod_x \SK_1(R_x)$ is always surjective. 
In order to show that $\psi$ is injective, consider $\sigma \E(R) \in \ker(\psi)$ with 
$\sigma = (\sigma_x) \in \SL_n(R)$ for some $n \ge 1$. Then $\sigma_x \in \E(R_x)$ for every $x \in X$. 
If $X$ is finite, then we can find $N \ge 1$, such $\sigma_x \in \E_N(R_x)$ for every $x$.
Therefore $(\sigma_x) \in \prod_x \E_N(R_x)$ and the result follows from $(iii)$. 
If $\sr(R_x) < N$ for some $N \ge 2$, then $\sigma_x \in \E_N(R_x)$ for every $x$ by the $\SK_1$ Stability Theorem \cite[Corollary 11.19.$v$]{Mag02}. 
Thus $(\sigma_x) \in \E(R)$, provided that $\E_N(R) \rightarrow  \prod_x \E_N(R_x)$ is an isomorphism.
\end{proof}

\begin{lemma} \label{LemProduct}
Let $\{R_x\}$ be a family of rings where $x$ ranges in a set $X$. Let $M$ be module over $R \Doteq \prod_x R_x$. 
Denote by $e_x$ the element of $R$ whose component in $R_x$ is $1$ and whose other components are zero. Let 
$\varphi: M \rightarrow \prod_x e_xM$ be the $R$-homomorphism defined by $\varphi(m) = (e_x m)$. Then the following hold.
\begin{itemize}
\item[$(i)$] If $X$ is finite, then $\varphi$ is an isomorphism. 
\item[$(ii)$] If $M$ is finitely generated, then $\varphi$ is surjective.
\item[$(iii)$] If $M$ is finitely presented, then $\varphi$ is an isomorphism.
\end{itemize}
\end{lemma}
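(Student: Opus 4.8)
The plan is to identify $\varphi$ as the canonical comparison map obtained by reducing a free presentation of $M$ modulo the complementary idempotents and then reassembling it over $R$. Write $I_x \Doteq (1 - e_x)R$ for the kernel of the projection $R \twoheadrightarrow R_x$, so that $R_x \cong R/I_x$ and $e_x M \cong M/I_x M \cong R_x \otimes_R M$; under this identification the map $\varphi$ is the natural one from $M = R \otimes_R M$ to $\prod_x (R_x \otimes_R M)$ induced by the projections $R \to R_x$. The two facts that do all the work are that the functor $e_x(-)$ is exact (multiplication by the idempotent $e_x$ splits $M = e_x M \oplus (1 - e_x)M$) and that arbitrary products are exact in the category of $R$-modules; the role of the finiteness hypotheses is solely to guarantee $\prod_x (R_x)^n = \left(\prod_x R_x\right)^n = R^n$ for the relevant \emph{finite} $n$.

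For $(i)$ I would argue directly. When $X$ is finite the $e_x$ are orthogonal idempotents summing to $1$, so $m = \sum_x e_x m$ for every $m \in M$; this yields injectivity at once, since $e_x m = 0$ for all $x$ forces $m = 0$. For surjectivity, given $(n_x) \in \prod_x e_x M$ I set $m \Doteq \sum_x n_x$ and use $e_y n_x = \delta_{xy} n_x$ (because $n_x = e_x n_x$ and the idempotents are orthogonal) to compute $e_y m = n_y$, so that $\varphi(m) = (n_x)$.

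For $(ii)$, fix a surjection $\pi \colon R^t \twoheadrightarrow M$ afforded by finite generation. Applying the right-exact functor $e_x(-) = R_x \otimes_R (-)$ produces surjections $\pi_x \colon R_x^t \twoheadrightarrow e_x M$, and the relation $\pi_x(e_x v) = e_x \pi(v)$ for $v \in R^t$ shows that the product map $\prod_x \pi_x \colon R^t \to \prod_x e_x M$ equals $\varphi \circ \pi$. Since a product of epimorphisms is an epimorphism, $\varphi \circ \pi$ is surjective, and therefore so is $\varphi$.

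For $(iii)$ I would refine the above to a finite presentation $R^s \xrightarrow{\psi} R^t \xrightarrow{\pi} M \to 0$. Reducing modulo $I_x$ preserves exactness, giving exact rows $R_x^s \xrightarrow{\psi_x} R_x^t \xrightarrow{\pi_x} e_x M \to 0$; applying the exact functor $\prod_x$ and invoking $\prod_x R_x^s = R^s$, $\prod_x R_x^t = R^t$ and $\prod_x \psi_x = \psi$ recovers the exact sequence $R^s \xrightarrow{\psi} R^t \xrightarrow{\prod_x \pi_x} \prod_x e_x M \to 0$. As $\prod_x \pi_x = \varphi \circ \pi$ by the computation in $(ii)$, and both $\pi$ and $\prod_x \pi_x$ have kernel $\operatorname{im}(\psi)$, the map $\varphi$ must be injective; combined with the surjectivity from $(ii)$, it is an isomorphism. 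The one step I expect to require care — and which I would write out in full — is the pair of bookkeeping identities $\prod_x \psi_x = \psi$ and $\prod_x \pi_x = \varphi \circ \pi$, i.e. that assembling the reduced relation and generator maps over all $x$ returns the original maps over $R = \prod_x R_x$; once these are verified, exactness of products finishes the argument immediately.
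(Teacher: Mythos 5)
Your proof is correct and takes essentially the same route as the paper's: part $(i)$ is the identical direct computation with $1 = \sum_x e_x$, and parts $(ii)$ and $(iii)$ fix a finite free presentation and assemble componentwise witnesses (coefficients over each $R_x$ expressing generation, respectively membership in the column span of the relation matrix) into elements of $R = \prod_x R_x$. The paper carries this out at the level of elements, so your ``exactness of $e_x(-)$ and of arbitrary products'' formulation is the same argument in functorial packaging, with the bookkeeping identities $\prod_x \pi_x = \varphi \circ \pi$ and $\prod_x \psi_x = \psi$ playing the role of the paper's explicit choices of $\lambda_{i,x}$ and $\mu_{j,x}$.
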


\begin{proof}
$(i)$. The identity of $R$ decomposes as $1 = \sum_x e_x$. Thus, for every $m \in M$, we have $m = 1 \cdot m = \sum_x e_x m$, which shows that $\varphi$ is injective.
If $(m_x) \in \prod_x e_xM$, we can set $m \Doteq \sum_x e_x m_x$ so that $\varphi(m) = (m_x)$. Hence $\varphi$ is surjective.

$(ii)$. Let $(m_1, \dots, m_n)$ be a generating vector of $M$. 
Let $(m_x) \in \prod_x e_xM$.
Since $(e_x m_1, \dots, e_x m_n)$ generates $e_xM$, we can find $\lambda_{i, x} \in R_x \,(1 \le i \le n)$ such that $m_x = \sum_i \lambda_{i, x} e_x m_i$.
Setting $\lambda_i \Doteq (\lambda_{i, x})_x \in R$, we observe that $(m_x) = \varphi(\sum_i \lambda_i m_i )$. Therefore $\varphi$ is surjective.

$(iii)$. By hypothesis $M$ is the cokernel of an $n \times m$ matrix $A$ associated to a generating vector $(m_1, \dots, m_n) \in M^n$. Let $\cb_1, \dots, \cb_m \in R^n$ be the columns of $A$.
Given $\lambdab = (\lambda_1, \dots, \lambda_n) \in R^n$, the element $\sum_i \lambda_i m_i$ is zero if and only if 
$\lambdab$ lies in the $R$-linear span of the columns $\cb_j$. Let $m = \sum_i \lambda_i m_i \in M$ be such that $\varphi(m) = 0$. 
Since for every $x \in X$  we have $\sum_i e_x\lambda_i m_i = 0$, 
it follows that $e_x \lambdab = \sum_j \mu_{j, x} \cb_j$ for some $\mu_{j, x} \in R$ with $1 \le j \le m$.
Setting $\mu_j = (\mu_{j, x})_x$, we observe that $\lambdab = \sum_j \mu_j \cb_j$. Therefore $m = 0$, which shows that $\varphi$ is injective. 
As $\varphi$ is also surjective by $(ii)$, it is an isomorphism.
\end{proof}

\begin{proposition} \label{PropProduct}
Let $\{R_x\}$ be a family of rings where $x$ ranges in a set $X$. Let $M$ be a finitely presented module over $R \Doteq \prod_x R_x$. 
\begin{itemize}
\item[$(i)$]
If the $R_x$-module $e_xM$ has property $\Delta_{\SL}$ for every $x \in X$, then $M$ has property $\Delta_{\SL}$.
\item[$(ii)$]
Assume that the natural map $\E_n(R) \rightarrow \prod_x \E_n(R_x)$ is an isomorphism for every $n \ge \mu(M)$.
If in addition the $R_x$-module $e_xM$ has property $\Delta_{\E}$ for every $x \in X$, then $M$ has property $\Delta_{\E}$.
\end{itemize}
\end{proposition}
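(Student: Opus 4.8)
The plan is to reduce the entire statement to the observation that every object at play---the generating vectors, the acting group, the module $\bigwedge^n M$, and the determinant map---splits compatibly as a product indexed by $x$, so that the map induced by the determinant on orbits becomes a \emph{product} of the bijections furnished by the property $\Delta_{\SL}$ (resp. $\Delta_{\E}$) of the factors $e_xM$. Throughout I regard each $e_xM$ as a module over $R_x \simeq Re_x$, and I use the isomorphism $\varphi : M \xrightarrow{\sim} \prod_x e_xM$, $m \mapsto (e_xm)$, of Lemma \ref{LemProduct}.$iii$.

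The first step is to decompose generating vectors. I would check that for $n \ge 1$ a vector $\mb = (m_1,\dots,m_n) \in M^n$ generates $M$ if and only if $(e_xm_1,\dots,e_xm_n)$ generates $e_xM$ over $R_x$ for every $x$: the forward implication is immediate, and the converse rebuilds a preimage componentwise and invokes the injectivity of $\varphi$. This yields $\V_n(M) \simeq \prod_x \V_n(e_xM)$, whence $\mu(M) = \sup_x \mu(e_xM)$; in particular, for $n \ge \mu(M)$ every factor $\V_n(e_xM)$ is non-empty and the property of $e_xM$ is available at length $n$. Next I would decompose the action: by Lemma \ref{LemNaturalMaps}.$i$ in part $(i)$, and by the standing hypothesis in part $(ii)$, the relevant group is the product $\prod_x \SL_n(R_x)$ (resp. $\prod_x \E_n(R_x)$) via $\sigma \mapsto (\sigma_x)$, and since matrix multiplication is computed componentwise one has $e_x(\mb\sigma) = (e_x\mb)\sigma_x$. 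The action is therefore the product of the factorwise actions, giving $\V_n(M)/\SL_n(R) \simeq \prod_x \V_n(e_xM)/\SL_n(R_x)$ (resp. with $\E_n$).

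For the target of the determinant, I would use that for $n \ge \mu(M)$ one has $\bigwedge^n M \simeq R/\Fitt_{n-1}(M)$, which is cyclic and finitely presented since $M$ is finitely presented (it equals $R/\Fitt_{\mu-1}(M)$ for $n=\mu(M)$ and vanishes for $n>\mu(M)$). As Fitting ideals commute with the base change $R \to R_x$, one gets $e_x\bigwedge^n M \simeq \bigwedge^n e_xM$; applying Lemma \ref{LemProduct} to the finitely presented module $\bigwedge^n M$ then yields $\V_1(\bigwedge^n M) \simeq \prod_x \V_1(\bigwedge^n e_xM)$. The determinant map respects these identifications because $e_x(m_1 \wedge \cdots \wedge m_n) = (e_xm_1)\wedge\cdots\wedge(e_xm_n)$. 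Assembling the three decompositions, the map induced by the determinant on $\V_n(M)/\SL_n(R)$ is the product over $x$ of the determinant maps $\V_n(e_xM)/\SL_n(R_x) \to \V_1(\bigwedge^n e_xM)$, each a bijection by $\Delta_{\SL}$ for $e_xM$ and $n \ge \mu(M) \ge \mu(e_xM)$. A product of bijections being a bijection, $M$ has $\Delta_{\SL}$, proving $(i)$; part $(ii)$ is identical, with $\E_n$ in place of $\SL_n$ and with the hypothesis supplying the product decomposition of the group.

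The one delicate point, and the step I expect to require the most care, is the compatibility of $\bigwedge^n M$ with a \emph{possibly infinite} product: for finitely many factors everything is formal, but in general it is finite presentation of $M$ (hence of $\bigwedge^n M$) that lets Lemma \ref{LemProduct}.$iii$ upgrade the componentwise base-change identities $e_x\bigwedge^n M \simeq \bigwedge^n e_xM$ into a genuine product decomposition of $\bigwedge^n M$ and of $\V_1(\bigwedge^n M)$. This is also precisely why part $(ii)$ must hypothesize that $\E_n(R) \to \prod_x \E_n(R_x)$ is an isomorphism: unlike $\SL_n$, the elementary subgroup does not split over an infinite product automatically.
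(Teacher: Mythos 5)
Your proof is correct and follows essentially the same route as the paper: decompose $\V_n(M)$, the acting group, and $\V_1(\bigwedge^n M)$ as products over $x$ (via Lemma \ref{LemProduct}.$iii$ and Lemma \ref{LemNaturalMaps}), and then observe that the induced determinant map is the product of the factorwise bijections. You merely supply more detail than the paper on the point it declares ``easily checked,'' namely that finite presentation of $\bigwedge^n M$ is what upgrades the componentwise identifications $e_x\bigwedge^n M \simeq \bigwedge^n e_xM$ to a product decomposition over a possibly infinite index set.
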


\begin{proof}
We shall only consider the property $\Delta_{\SL}$. The corresponding result for $\Delta_{\E}$ can be derived in the same way.
Assume that $(i)$ holds. Let $n \ge \mu(M)$. Using the natural isomorphism $\SL_{\mu}(R) \simeq \prod_x \SL_{\mu}(R_x)$ and Lemma \ref{LemProduct}.$iii$, 
we obtain a bijection 
$\V_n(M)/\SL_n(R) \simeq \prod_x \V_n(e_xM)/ \SL_n(R_x)$. 
By hypothesis the determinant map induces a bijection 
$\V_n(e_x M)/ \SL_n(R_x) \simeq \V_1(\bigwedge^n e_x M)$ for every $x$. 
It is easily checked that $\prod_x \V_1(\bigwedge^n e_x M) \simeq \V_1(\bigwedge^n M)$ by means of the natural $R$-isomorphism 
$\prod_x \bigwedge^n e_xM \simeq \bigwedge^n M$. The result follows.
\end{proof}

\begin{proposition} \label{PropQuotient}
Let $R$ be a ring and let $\ovR$ be a quotient of $R$.
\begin{itemize}
\item[$(i)$]If $R$ satisfies $\Delta_{\E}$, then $\ovR$ satisfies $\Delta_{\E}$.
\item[$(ii)$] If $R$ satisfies $\Delta_{\SL}$ and if the natural map $\SL_n(R) \rightarrow \SL_n(\overline{R})$ is surjective for every $n$, then $\ovR$ satisfies $\Delta_{\SL}$.
\end{itemize}
\end{proposition}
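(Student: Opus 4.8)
The plan is to reduce the statement to the hypothesis on $R$ by viewing every $\ovR$-module as an $R$-module through the quotient map $R \twoheadrightarrow \ovR$, whose kernel I denote by $I$. Let $\ovM$ be a finitely presented $\ovR$-module, regarded as an $R$-module. Because $R \to \ovR$ is onto, the $R$- and $\ovR$-submodules generated by a family of elements coincide, so $\V_n(\ovM)$ and $\mu(\ovM)$ are the same whether computed over $R$ or over $\ovR$. Since $I$ annihilates $\ovM$, the tensor and exterior powers of $\ovM$ over $R$ and over $\ovR$ coincide, whence $\bigwedge^n_R \ovM = \bigwedge^n_{\ovR}\ovM$ and the two determinant maps agree. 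Finally, a matrix in $G_n(R)$ acts on $\ovM^n$ exactly through its image in $G_n(\ovR)$; as $\E_n(R) \to \E_n(\ovR)$ is always onto (see the proof of Proposition \ref{PropERank}.$i$) and, by hypothesis in case $(ii)$, $\SL_n(R) \to \SL_n(\ovR)$ is onto for every $n$, the $G_n(R)$- and $G_n(\ovR)$-orbits on $\V_n(\ovM)$ coincide. Consequently $\ovM$ satisfies $\Delta_G$ over $\ovR$ if and only if the determinant induces a bijection $\V_n(\ovM)/G_n(R) \to \V_1(\bigwedge^n\ovM)$ for every $n \ge \mu(\ovM)$, and this is what I will prove from $\Delta_G(R)$. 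Cases $(i)$ and $(ii)$ then run in parallel, the sole difference being that the surjectivity $\E_n(R)\twoheadrightarrow\E_n(\ovR)$ is automatic whereas $\SL_n(R)\twoheadrightarrow\SL_n(\ovR)$ is the standing hypothesis.

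The only obstruction to applying the hypothesis directly is that $\ovM$, though finitely presented over $\ovR$, need not be finitely presented over $R$. To circumvent this I would \emph{lift} $\ovM$: choosing a minimal presentation $\ovR^m \to \ovR^{\mu} \to \ovM \to 0$ with $\mu = \mu(\ovM)$ and lifting its matrix entrywise to $R$ produces a finitely presented $R$-module $M$ with $M/IM \cong \ovM$ and $\mu(M) = \mu$. Applying $\Delta_G(R)$ to $M$ gives, for every $n \ge \mu$, a bijection $\det_M : \V_n(M)/G_n(R) \xrightarrow{\sim} \V_1(\bigwedge^n M)$. The reduction $\pi : M \to \ovM$ then fits into a commutative square intertwining $\det_M$ and $\det_{\ovM}$ through the $G_n(R)$-equivariant maps $\pi_* : \V_n(M) \to \V_n(\ovM)$ and $\rho : \V_1(\bigwedge^n M) \to \V_1(\bigwedge^n\ovM)$, the latter induced by $\bigwedge^n\ovM = (\bigwedge^n M)\otimes_R\ovR$. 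The aim is to deduce bijectivity of $\det_{\ovM}$ from that of $\det_M$ by chasing this square.

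The main obstacle is that $\pi_*$ and $\rho$ are \emph{not} individually bijective: neither units nor generating vectors need lift along $\pi$, so the conclusion cannot be read off as a composition of bijections, and one must establish surjectivity and injectivity of $\det_{\ovM}$ directly. The case $n > \mu$ is comparatively soft, since there $\bigwedge^n M = \bigwedge^n\ovM = 0$ and the assertion reduces to transitivity of $G_n$ on the two sets of generating vectors, which transfers once generating vectors are lifted through $\pi$ (a stable-range argument in the spirit of Proposition \ref{PropStableRankAndSurjectivity}) together with the surjectivity $G_n(R)\twoheadrightarrow G_n(\ovR)$; the case $\mu=1$ is also immediate, as there the determinant is the identity on $\V_1$. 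The critical case is $n=\mu$, where I must show that every generator of $\bigwedge^{\mu}\ovM \cong \ovR/\Fitt_{\mu-1}(\ovM)$ is realised by $\det_{\ovM}$. By Lemma \ref{LemDetPi} this amounts to lifting a prescribed basis of the free quotient $\ovM/\Fitt_{\mu-1}(\ovM)\ovM \cong (\ovR/\Fitt_{\mu-1}(\ovM))^{\mu}$ to a generating $\mu$-tuple of $\ovM$; the determinant-surjectivity already known for $M$, transported through $\pi$ and corrected by elements of $G_{\mu}(R)$ lifted via the surjection onto $G_{\mu}(\ovR)$, should furnish the required tuple, while injectivity is handled symmetrically by lifting an equalising pair of generating vectors and invoking injectivity of $\det_M$. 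I expect this determinant-surjectivity step at $n=\mu$, namely the lifting of bases of the free quotient to generating vectors, to be the heart of the argument.
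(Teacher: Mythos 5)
Your opening paragraph already contains the entirety of the paper's argument: after restriction of scalars along $R \twoheadrightarrow \ovR$, the set $\V_n(\ovM)$, the number $\mu(\ovM)$, the exterior power $\bigwedge^n \ovM$ and the determinant map are the same whether computed over $R$ or over $\ovR$, and the surjectivity of $\E_n(R) \rightarrow \E_n(\ovR)$ (resp.\ of $\SL_n(R) \rightarrow \SL_n(\ovR)$, which is the hypothesis in case $(ii)$) makes the two orbit sets coincide. At that point the statement for $\ovM$ over $\ovR$ \emph{is} the statement $\Delta_G$ for the same underlying module viewed over $R$, and the paper simply applies the hypothesis on $R$ to that restricted module and stops. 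No lifting of $\ovM$ to a genuinely different $R$-module is introduced.

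The gap lies in everything after your first paragraph. You rightly observe that the restricted module need not be finitely presented over $R$ when $\ker(R \rightarrow \ovR)$ is not finitely generated (a point the paper's own proof passes over in silence), but the repair you propose does not go through. Replacing $\ovM$ by a finitely presented lift $M$ with $M/IM \cong \ovM$ forces you to transfer the bijection $\det_M$ along the maps $\pi_* : \V_n(M) \rightarrow \V_n(\ovM)$ and $\rho : \V_1(\bigwedge^n M) \rightarrow \V_1(\bigwedge^n \ovM)$, and, as you yourself note, neither is surjective in general. Both the surjectivity of $\det_{\ovM}$ at $n=\mu$ and your injectivity argument (``lifting an equalising pair of generating vectors'') require lifting elements of $\V_n(\ovM)=\V_n(M/IM)$ to $\V_n(M)$; by Proposition \ref{PropStableRankAndSurjectivity} this is precisely the condition $\sr(M) \le n$, and Proposition \ref{PropQuotient} carries no hypothesis whatsoever on stable ranks. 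The same objection applies to the ``stable-range argument in the spirit of Proposition \ref{PropStableRankAndSurjectivity}'' invoked for $n > \mu$. The phrases ``should furnish the required tuple'' and ``I expect this \dots\ to be the heart of the argument'' mark exactly the steps that are neither proved nor provable from the stated hypotheses, so the proposal is incomplete where it departs from the restriction-of-scalars argument it began with.
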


\begin{remark}
The natural map $\SL_n(R) \rightarrow \SL_n(\overline{R})$ is surjective for every $n$ and every quotient $\overline{R}$ of $R$ if $\sr(R) \le 2$ \cite[Corollary 8.3]{EO67}. 
See also the partial stability result \cite[Theorem 8.3]{EO67} when $R$ is a ring of univariate polynomials over a principal ideal domain.
\end{remark}

\begin{proof}
$(i)$. The quotient map $R \twoheadrightarrow \ovR$ endows $\ovR$ with an $R$-algebra structure.
Let $\ovM$ be an $\ovR$-module and let $M$ be the $R$-module with underlying set $\ovM$ that is obtained from $\ovM$ by restricting scalars. 
We have the following commutative diagram
$$\begin{tikzcd}
\V_n(M) / \E_n(R) \arrow{r}{\det} \arrow[swap]{d}{} & \V_1(\bigwedge^n M) \arrow{d}{} \\%
\V_n(\ovM) / \E_n(\ovR) \arrow{r}{\det}& \V_1(\bigwedge^n \ovM)
\end{tikzcd}
$$
where the left vertical arrow is given by the surjectivity of the natural map $\E_n(R) \rightarrow \E_n(\ovR)$ and the right vertical arrow is given by the base change isomorphism \cite[Proposition A2.2.b]{Eis95}.
The bottom horizontal arrow is a bijection as the other arrows are bijections. $(ii)$. Proceed as in $(i)$.
\end{proof}

\section{Proofs} \label{SecProofs}

\paragraph{Proof of Theorem \ref{ThSL}}
If $R$ is an elementary divisor ring, then $R$ is K-Hermite and $M$ has an invariant decomposition in the sense of Theorem \ref{ThEDRDecomposition}.
The result follows by combining Propositions \ref{PropTorsionFree} and \ref{PropInvariantDecomposition}.

Let us suppose that $R$ is an almost-LG coherent Prüfer ring. Let $(e_i)_i$ 
be the idempotents of $R$ appearing in the invariant decomposition of $M$ in Theorem \ref{ThInvariantDecomposition}. 
Then each $Re_i$ is an almost-LG coherent Prüfer ring and $M$ is isomorphic to $\prod_i e_iM$ by Lemma \ref{LemProduct}.$i$. 
By Proposition \ref{PropProduct}.$i$, we can assume, without loss of generality, that $M$ is of the form 
$$R/I_1 \times \cdots \times R/I_k \times I \times R^l  \,(k \ge 0, l \ge 0)$$ where $R \supsetneq  I_1 \supseteq \cdots \supseteq I_k$ is a descending chain of invertible ideals
and $I$ is either $\{0\}$ or an invertible ideal. As $R$ is a coherent Prüfer ring, it is certainly a PP-ring.
If $I$ is invertible, then $R$ and $I$ satisfy the hypotheses of Proposition \ref{PropTransitivityPP} by \cite[Lemma 10]{Cou07}. In particular $\er(R) \le 2$ and $R$ is Hermite.
The result follows by combining Propositions \ref{PropTorsionFree}, \ref{PropInvariantDecomposition} and \ref{PropWithIAndSomeTorsion}.
$\square$

\paragraph{Proof of Theorem \ref{ThSK1}}
Let us assume first that $R$ is a Bézout ring whose proper quotients have stable rank $1$. 
By \cite[Theorem 3.7]{McGov08}, the ring $R$ is an elementary divisor ring of stable rank at most $2$.

$(i)$. If $\mu = 1$, the result is trivial. Let us suppose that $\mu' > 1$.
By Theorem \ref{ThEDRDecomposition}, the module $M$  is of the form 
$R/\ia_1 \times \cdots \times R/\ia_k  \,(k > 1)$ where $\ia_1 \supseteq \cdots \supseteq \ia_k$ is a descending chain of ideals such that $R \supsetneq \ia_1$ and  $\ia_2 \neq \{0\}$.
Then the result follows from Proposition \ref{PropInvariantDecomposition} and Remark \ref{RemInvariantDecomposition}.

$(ii)$. As $M$  is of the form $R/\ia_1 \times R^l  \,(l > 1)$ with $R \supsetneq \ia_1 \neq \{0\}$, the result follows from Lemma \ref{LemSK1}.

$(iii)$. As $\sr(R) \le 2$, Proposition \ref{PropInvariantDecompositionKPlusD} applies with $d = 0$ if $\mu' > 0$. 
Otherwise, there is nothing to show since $R$ is a Bézout ring. 

$(iv)$. Proposition \ref{PropInvariantDecompositionKPlusD} applies with $d = 1$.

Let us assume now that $R$ is an almost-LG coherent Prüfer ring.  

$(i)$. If $\mu = 1$, the result is trivial. Let us suppose that $\mu' > 1$.
Because of Theorem \ref{ThInvariantDecomposition}, Proposition \ref{PropProduct}.$ii$ and Lemma \ref{LemNaturalMaps}.$ii$, we can assume, without loss of generality, that  $M$  is of the form 
$R/I_1 \times \cdots \times R/I_k \times I \times R^l  \,(k > 1, l \ge 0)$ where 
$R \supsetneq  I_1 \supseteq \cdots \supseteq I_k \neq \{0\}$ is a descending chain of invertible ideals 
and $I$ is either $\{0\}$ or an invertible ideal.
Then the result follows from Proposition \ref{PropInvariantDecomposition}, 
Remark \ref{RemInvariantDecomposition} and Proposition \ref{PropWithIAndSomeTorsion}.$ii$

$(ii)$. We can assume, without loss of generality, that  $M$ is of the form 
$R/I_1 \times I \times R^l  \,(l > 0)$ where $I_1$ is an invertible ideal, $I$ is either invertible or $\{0\}$ and $l > 1$ if $I = \{0\}$. 
Then the result follows from Lemma \ref{LemSK1}.

$(iii)$.
Let us assume first that $\mu' > 0$.
We can assume without loss of generality that  $M$  is of the form 
$R/I_1 \times \cdots \times R/I_k \times I \times R^l  \,(k > 0, l \ge 0)$ where 
$R \supsetneq  I_1 \supseteq \cdots \supseteq I_k \neq \{0\}$ is a descending chain of invertible ideals and $I$ is either $\{0\}$ 
or an invertible ideal. If $\mu = 1$, then $M = R/I_1$. As $\sr(R/I_1) = 1$, Proposition \ref{PropERank}.$iii$ yields the result. If $\mu > 1$ then the result follows from 
Proposition \ref{PropCyclicPlusIdealBeyondMu}, Proposition \ref{PropInvariantDecomposition} and Proposition \ref{PropWithIAndSomeTorsion}.$iii$.

Suppose now that $\mu' = 0$ and that $eM$ surjects onto an invertible non-principal ideal of $Re$ where $e$ is an idempotent of $R$ satisfying $\mu(eM) = \mu(M)$. 
We can assume, without loss of generality, that $e$ is the last idempotent in the decomposition of Theorem \ref{ThInvariantDecomposition}.
We can also assume that  $M$ is of the form 
$I \times R^l  \,(l \ge 0)$ with $I$ invertible, in which case the result is given by Proposition \ref{PropTransitivityPP} and Proposition \ref{PropTorsionFree}.$iv$. 
Indeed, if $e'$ is an idempotent element of $R$ such that $\mu(e'M) < \mu(M)$, then $\E_n(R)$ acts transitively on
 $\V_n(e'M)$ for every $n > \mu(M)$ by Proposition \ref{PropTransitivityPP}, Proposition \ref{PropTorsionFree}.$iv$ and 
Proposition \ref{PropSK1Rl}.$i$. 
$(iv)$. By $(iii)$, we can assume that $M$ is free, so that the result follows from Proposition \ref{PropSK1Rl}.$i$. 
$\square$

\bibliographystyle{alpha}
\bibliography{Biblio}

\def\cprime{$'$} \def\cprime{$'$} \def\cprime{$'$} \def\cprime{$'$}
  \def\cprime{$'$}
\begin{thebibliography}{GMV94}

\bibitem[Bas64]{Bas64}
H.~Bass.
\newblock {$K$}-theory and stable algebra.
\newblock {\em Inst. Hautes \'Etudes Sci. Publ. Math.}, (22):5--60, 1964.

\bibitem[Bas75]{Bas75}
H.~Bass.
\newblock Lib\'{e}ration des modules projectifs sur certains anneaux de
  polyn\^{o}mes.
\newblock In {\em S\'{e}minaire {B}ourbaki, 26e ann\'{e}e (1973/1974), {E}xp.
  {N}o. 448}, pages 228--354. Lecture Notes in Math., Vol. 431. 1975.

\bibitem[Che17]{Chen17b}
J.~Chen.
\newblock Infinite prime avoidance.
\newblock Preprint, https://arxiv.org/abs/1710.05496v1, October 2017.

\bibitem[CK83]{CK83}
D.~Carter and G.~Keller.
\newblock Bounded elementary generation of {${\rm SL}_{n}({\mathcal O})$}.
\newblock {\em Amer. J. Math.}, 105(3):673--687, 1983.

\bibitem[CLQ04]{CLQ04}
T.~Coquand, H.~Lombardi, and C.~Quitt{\'e}.
\newblock Generating non-{N}oetherian modules constructively.
\newblock {\em Manuscripta Math.}, 115(4):513--520, 2004.

\bibitem[Coh58]{Coh58}
P.~Cohn.
\newblock Rings of zero-divisors.
\newblock {\em Proc. Amer. Math. Soc.}, 9:909--914, 1958.

\bibitem[Coh66]{Coh66}
P.~Cohn.
\newblock On the structure of the {${\rm GL}_{2}$} of a ring.
\newblock {\em Inst. Hautes \'Etudes Sci. Publ. Math.}, (30):5--53, 1966.

\bibitem[Cou07]{Cou07}
F.~Couchot.
\newblock Finitely presented modules over semihereditary rings.
\newblock {\em Comm. Algebra}, 35(9):2685--2692, 2007.

\bibitem[DB72]{DB72}
B.~Dulin and H.~Butts.
\newblock Composition of binary quadratic forms over integral domains.
\newblock {\em Acta Arith.}, 20:223--251, 1972.

\bibitem[DTZ18]{DTZ18}
M.~Das, S.~Tikader, and M.~Zinna.
\newblock Orbit spaces of unimodular rows over smooth real affine algebras.
\newblock {\em Invent. Math.}, 212(1):133--159, 2018.

\bibitem[EG82]{EG82}
D.~Estes and R.~Guralnick.
\newblock Module equivalences: local to global when primitive polynomials
  represent units.
\newblock {\em J. Algebra}, 77(1):138--157, 1982.

\bibitem[Eis95]{Eis95}
D.~Eisenbud.
\newblock {\em Commutative algebra}, volume 150 of {\em Graduate Texts in
  Mathematics}.
\newblock Springer-Verlag, New York, 1995.
\newblock With a view toward algebraic geometry.

\bibitem[EO67]{EO67}
D.~Estes and J.~Ohm.
\newblock Stable range in commutative rings.
\newblock {\em J. Algebra}, 7:343--362, 1967.

\bibitem[Fas11]{Fas11}
J.~Fasel.
\newblock Some remarks on orbit sets of unimodular rows.
\newblock {\em Comment. Math. Helv.}, 86(1):13--39, 2011.

\bibitem[FF88]{FF88}
C.~Frohman and B.~Fine.
\newblock Some amalgam structures for {B}ianchi groups.
\newblock {\em Proc. Amer. Math. Soc.}, 102(2):221--229, 1988.

\bibitem[FS01]{FS01}
L.~Fuchs and L.~Salce.
\newblock {\em Modules over non-{N}oetherian domains}, volume~84 of {\em
  Mathematical Surveys and Monographs}.
\newblock American Mathematical Society, Providence, RI, 2001.

\bibitem[GH56]{GH56}
L.~Gillman and M.~Henriksen.
\newblock Some remarks about elementary divisor rings.
\newblock {\em Trans. Amer. Math. Soc.}, 82:362--365, 1956.

\bibitem[GH70]{GH70}
R.~Gilmer and W.~Heinzer.
\newblock On the number of generators of an invertible ideal.
\newblock {\em J. Algebra}, 14:139--151, 1970.

\bibitem[GMV94]{GMV94}
F.~Grunewald, J.~Mennicke, and L.~Vaserstein.
\newblock On the groups {${\rm SL}_2({\bf Z}[x])$} and {${\rm SL}_2(k[x,y])$}.
\newblock {\em Israel J. Math.}, 86(1-3):157--193, 1994.

\bibitem[Guy17]{Guy17a}
L.~Guyot.
\newblock On finitely generated modules over quasi-euclidean rings.
\newblock {\em Arch. Math. (Basel).}, 2017.

\bibitem[Guy20]{Guy20}
L.~Guyot.
\newblock Equivalent generating pairs of an ideal of a commutative ring.
\newblock {\em Preprint}, 2020.

\bibitem[Hei76]{Heit76}
R.. Heitmann.
\newblock Generating ideals in {P}r\"{u}fer domains.
\newblock {\em Pacific J. Math.}, 62(1):117--126, 1976.

\bibitem[Hei84]{Heit84}
R.~Heitmann.
\newblock Generating non-{N}oetherian modules efficiently.
\newblock {\em Michigan Math. J.}, 31(2):167--180, 1984.

\bibitem[HSG86]{HS86}
J.~Hermida and T.~S\'{a}nchez-Giralda.
\newblock Linear equations over commutative rings and determinantal ideals.
\newblock {\em J. Algebra}, 99(1):72--79, 1986.

\bibitem[Kap49]{Kap49}
I.~Kaplansky.
\newblock Elementary divisors and modules.
\newblock {\em Trans. Amer. Math. Soc.}, 66:464--491, 1949.

\bibitem[Lam00]{Lam00}
T.~Lam.
\newblock Bass's work in ring theory and projective modules.
\newblock arXiv:0002217v1 [math.RA] 25 Feb 2000, February 2000.

\bibitem[Lam01]{Lam01}
T.~Lam.
\newblock {\em A first course in noncommutative rings}, volume 131 of {\em
  Graduate Texts in Mathematics}.
\newblock Springer-Verlag, New York, second edition, 2001.

\bibitem[Lam06]{Lam06}
T.~Lam.
\newblock {\em Serre's problem on projective modules}.
\newblock Springer Monographs in Mathematics. Springer-Verlag, Berlin, 2006.

\bibitem[LLS74]{LLS74}
M.~Larsen, W.~Lewis, and T.~Shores.
\newblock Elementary divisor rings and finitely presented modules.
\newblock {\em Trans. Amer. Math. Soc.}, 187:231--248, 1974.

\bibitem[LM88]{LM88}
D.~Lantz and M.~Martin.
\newblock Strongly two-generated ideals.
\newblock {\em Comm. Algebra}, 16(9):1759--1777, 1988.

\bibitem[LQ15]{LQ15}
H.~Lombardi and C.~Quitt\'{e}.
\newblock {\em Commutative algebra: constructive methods}, volume~20 of {\em
  Algebra and Applications}.
\newblock Springer, Dordrecht, revised edition, 2015.
\newblock Finite projective modules, Translated from the French by Tania K.
  Roblot.

\bibitem[LS77]{LS77}
R.~Lyndon and P.~Schupp.
\newblock {\em Combinatorial group theory}.
\newblock Springer-Verlag, Berlin, 1977.
\newblock Ergebnisse der Mathematik und ihrer Grenzgebiete, Band 89.

\bibitem[LW85]{LW85}
L.~Levy and R.~Wiegand.
\newblock Dedekind-like behavior of rings with {$2$}-generated ideals.
\newblock {\em J. Pure Appl. Algebra}, 37(1):41--58, 1985.

\bibitem[Mag02]{Mag02}
B.~Magurn.
\newblock {\em An algebraic introduction to {$K$}-theory}, volume~87 of {\em
  Encyclopedia of Mathematics and its Applications}.
\newblock Cambridge University Press, Cambridge, 2002.

\bibitem[Mat83]{Mat83}
E.~Matlis.
\newblock The minimal prime spectrum of a reduced ring.
\newblock {\em Illinois J. Math.}, 27(3):353--391, 1983.

\bibitem[Mat89]{Mat89}
H.~Matsumura.
\newblock {\em Commutative ring theory}, volume~8 of {\em Cambridge Studies in
  Advanced Mathematics}.
\newblock Cambridge University Press, Cambridge, second edition, 1989.
\newblock Translated from the Japanese by M. Reid.

\bibitem[McG08]{McGov08}
W.~McGovern.
\newblock B\'{e}zout rings with almost stable range 1.
\newblock {\em J. Pure Appl. Algebra}, 212(2):340--348, 2008.

\bibitem[MM82]{MeMo82}
P.~Menal and J.~Moncasi.
\newblock On regular rings with stable range {$2$}.
\newblock {\em J. Pure Appl. Algebra}, 24(1):25--40, 1982.

\bibitem[MR87]{McCR87}
J.~McConnell and J.~Robson.
\newblock {\em Noncommutative {N}oetherian rings}.
\newblock Pure and Applied Mathematics (New York). John Wiley \& Sons, Ltd.,
  Chichester, 1987.
\newblock With the cooperation of L. W. Small, A Wiley-Interscience
  Publication.

\bibitem[Mur03]{Mur03}
M.~Murthy.
\newblock Generators of a general ideal.
\newblock In {\em A tribute to {C}. {S}. {S}eshadri ({C}hennai, 2002)}, Trends
  Math., pages 379--384. Birkh\"{a}user, Basel, 2003.

\bibitem[MW81]{McDW81}
B.~McDonald and W.~Waterhouse.
\newblock Projective modules over rings with many units.
\newblock {\em Proc. Amer. Math. Soc.}, 83(3):455--458, 1981.

\bibitem[Nic11]{Nic11}
B.~Nica.
\newblock The unreasonable slightness of {${\rm E}_2$} over imaginary quadratic
  rings.
\newblock {\em Amer. Math. Monthly}, 118(5):455--462, 2011.

\bibitem[Rie83]{Rief83}
M.~Rieffel.
\newblock Dimension and stable rank in the {$K$}-theory of
  {$C^{\ast}$}-algebras.
\newblock {\em Proc. London Math. Soc. (3)}, 46(2):301--333, 1983.

\bibitem[She16]{She17}
A.~Sheydvasser.
\newblock A corrigendum to unreasonable slightness.
\newblock {\em Amer. Math. Monthly}, 123(5):482--485, 2016.

\bibitem[Sho74]{Shores74}
T.~Shores.
\newblock Modules over semihereditary {B}ezout rings.
\newblock {\em Proc. Amer. Math. Soc.}, 46:211--213, 1974.

\bibitem[Sta18]{Stan17}
K.~Stange.
\newblock Visualizing the arithmetic of imaginary quadratic fields.
\newblock {\em Int. Math. Res. Not. IMRN}, (12):3908--3938, 2018.

\bibitem[Vas72]{Vas72}
L.~Vaser{\v{s}}te{\u \i}n.
\newblock The group {$SL_{2}$} over {D}edekind rings of arithmetic type.
\newblock {\em Mat. Sb. (N.S.)}, 89(131):313--322, 351, 1972.

\bibitem[vdK77]{vdK77}
W.~van~der Kallen.
\newblock The {$K_{2}$} of rings with many units.
\newblock {\em Ann. Sci. \'{E}cole Norm. Sup. (4)}, 10(4):473--515, 1977.

\bibitem[vdK83]{vdK83}
W.~van~der Kallen.
\newblock A group structure on certain orbit sets of unimodular rows.
\newblock {\em J. Algebra}, 82(2):363--397, 1983.

\bibitem[VS76]{SV76}
L.~Vaser\v{s}te\u{\i}n and A.~Suslin.
\newblock Serre's problem on projective modules over polynomial rings, and
  algebraic {$K$}-theory.
\newblock {\em Izv. Akad. Nauk SSSR Ser. Mat.}, 40(5):993--1054, 1199, 1976.

\bibitem[Wei13]{Wei13}
C.~Weibel.
\newblock {\em The {$K$}-book}, volume 145 of {\em Graduate Studies in
  Mathematics}.
\newblock American Mathematical Society, Providence, RI, 2013.
\newblock An introduction to algebraic $K$-theory.

\end{thebibliography}

\end{document}